\newcolumntype{L}[1]{>{\raggedright\let\newline\\\arraybackslash\hspace{0pt}}m{#1}}
\newcolumntype{C}[1]{>{\centering\let\newline\\\arraybackslash\hspace{0pt}}m{#1}}
\newcolumntype{R}[1]{>{\raggedleft\let\newline\\\arraybackslash\hspace{0pt}}m{#1}}
\theoremstyle{plain}
\newtheorem{theorem}{Theorem}[section]
\newtheorem*{theorem*}{Theorem}
\newtheorem{proposition}[theorem]{Proposition}
\newtheorem{corollary}[theorem]{Corollary}
\newtheorem{lemma}[theorem]{Lemma}
\newtheorem*{conjecture*}{Conjecture}
\theoremstyle{definition}
\newtheorem{definition}[theorem]{Definition}
\newtheorem{question}[theorem]{Question}
\newtheorem{remark}[theorem]{Remark}
\newtheorem{example}[theorem]{Example}
\newcommand{\enm}[1]{\ensuremath{#1}}          %
\newcommand{\op}[1]{\operatorname{#1}}
\newcommand{\cal}[1]{\mathcal{#1}}
\renewcommand{\bar}[1]{\overline{#1}}
\newcommand{\CC}{\enm{\mathbb{C}}}
\newcommand{\ZZ}{\enm{\mathbb{Z}}}
\newcommand{\FF}{\enm{\mathbb{F}}}
\newcommand{\PP}{\enm{\mathbb{P}}}
\newcommand{\Ee}{\enm{\cal{E}}}
\newcommand{\Ff}{\enm{\cal{F}}}
\newcommand{\Gg}{\enm{\cal{G}}}
\newcommand{\Ii}{\enm{\cal{I}}}
\newcommand{\Kk}{\enm{\cal{K}}}
\newcommand{\Ll}{\enm{\cal{L}}}
\newcommand{\Oo}{\enm{\cal{O}}}
\newcommand{\Qq}{\enm{\cal{Q}}}
\newcommand{\Tt}{\enm{\cal{T}}}
\renewcommand{\phi}{\varphi}
\renewcommand{\theta}{\vartheta}
\renewcommand{\epsilon}{\varepsilon}
\newenvironment{nscenter}
 {\parskip=0pt\par\nopagebreak\centering}
 {\par\noindent\ignorespacesafterend}
\newcommand{\Hom}{\op{Hom}}
\newcommand{\Ext}{\op{Ext}}
\renewcommand{\to}[1][]{\xrightarrow{\ #1\ }}
\newcommand{\old}[1]{}
\DeclareMathOperator{\hh}{h}
\DeclareMathOperator{\di}{dim}
\title[Generalized logarithmic sheaf on smooth projective surfaces]{Generalized logarithmic sheaf on smooth projective surfaces}
\author{S.~Huh $^{1}$} 
\address{$^{1}$Sungkyunkwan University, Suwon 440-746, Korea}
\email{sukmoonh@skku.edu}
\author{S.~Marchesi $^{2}$}
\address{$^{2}$ Universitat de Barcelona, 08007 Barcelona, Spain}
\address{$^{2}$ Centre de Recerca Matemàtica,  08193 Barcelona, Spain}
\email{marchesi@ub.edu}
\author{J.~Pons-Llopis $^{3}$}
\address{$^{3}$ Politecnico di Torino, 10129 Torino, Italy}
\email{juan.ponsllopis@polito.it}
\author{J.~Vall\`es $^{4}$}
\address{$^{4}$ Universit\'e de Pau et des Pays de l'Adour, 64013 Pau Cedex, France}
\email{jean.valles@univ-pau.fr}
\thanks{SH is supported by the National Research Foundation of Korea(NRF) grant funded by the Korea government(MSIT) (No. 2018R1C1A6004285 and No. 2016R1A5A1008055). SM is partially supported by PID2020-113674GB-I00 and by the Spanish State Research Agency, through the Severo Ochoa and María de Maeztu Program for Centers and Units of Excellence in R\&D (CEX2020-001084-M) and is a member of INdAM– GNSAGA. SM and JPLl are members of INdAM– GNSAGA. JV is partially supported by PRCI ANR -- FAPESP : ANR-21-CE40-0017}
\keywords{logarithmic sheaf, Torelli problem}
\subjclass[2020]{14F06, 14J60, 14C34}
\begin{document}

\begin{abstract}
We define the notion of generalized logarithmic sheaves on a smooth projective surface, associated to a pair consisting of a reduced curve and some fixed points on it. We then set up the study of the Torelli property in this setting, focusing mostly in the case of the blow-up of the projective plane on a reduced set of points and, in particular, in the case of the cubic surface.  We also study the stability property of generalized logarithmic sheaves as well as carrying out the description of their moduli spaces.

\end{abstract}

\maketitle
\tableofcontents
\bigskip

\section{Introduction}

Given a smooth projective variety $X$ and a reduced and effective divisor $D$ with normal crossings on it, one of the sheaves that have raised particular attention during the recent years is the {\it logarithmic sheaf} or the {\it sheaf of logarithmic differentials} $\Omega_X^1(\log D)$, defined as the  sheaf of differential $1$-forms on $X$ with logarithmic poles along $D$. Its dual has also a very classical presentation:  it is the {\it logarithmic tangent sheaf} associated to $D$, and denoted by $\Tt_X(-\log D)$, which can be defined as the sheaf whose local sections are  vector fields tangent to the divisor $D$. The study of logarithmic sheaves was initiated by Deligne in \cite{De} having in mind to define a mixed Hodge structure on the complement $X\backslash
 D$. Later on, it was generalized by Saito to arbitrary reduced divisors in \cite{Saito}. 

Related to these vector bundles a very natural question arises immediately, namely, to which degree the logarithmic sheaf $\Omega_X^1(\log D)$ determines the divisor $D$. Obviously, two extremal cases can appear: on one extreme, it would be possible that the logarithmic sheaf determines unambiguously the original divisor. In this case, by analogy with the famous result about the injectivity of the period map from the moduli space of algebraic curves of fixed genus, we will say that \emph{the Torelli property holds} for the divisor $D$.  Among other upshots, such a property often allows to give a nice description of the sheaves in the corresponding moduli space. The opposite extreme would appear when $\Omega_X^1(\log D)$ splits as a direct sum of line bundles of the form $\bigoplus_{i=1}^{\di (X)}\Oo_X(a_i)$. In this case $D$ is called a \textit{free divisor} or analogously, we will say that its associated logarithmic sheaf is free.

Both situations have been identified in different settings: for instance, in \cite{UY}, Ueda and Yoshinaga showed that the Torelli property holds for a smooth divisor in $\PP^n$ if and only if it is not of Sebastiani-Thom type (in the sense that its defining polynomial has a presentation as the sum of two polynomials on disjoint sets of variables). For non-smooth divisors, one may call a divisor an {\it arrangement} of its irreducible components, depending on view points. Dolgachev and Kapranov showed in \cite{DK} that a general arrangement of $m$ hyperplanes in $\PP^n$ is determined by the logarithmic sheaf whenever $m \ge 2n+3$, unless the hyperplanes osculate the same rational normal curve. The result was extended to the range $m \ge n+2$ in \cite{Valles}. For higher degree, Angelini showed in \cite{Angelini} that the Torelli property holds for a generic arrangement of $n$ conics in $\PP^2$ whenever $n\ge 9$. For irreducible singular divisors, Faenzi and the second named author proved in \cite{FM} that divisors coming from generic determinants have
simple (in characteristic zero, stable) logarithmic tangent sheaves and satisfy the Torelli property.\\ Regarding the freeness of logarithmic tangent vector bundles, a leading conjecture by Terao, reported in \cite{OT}, states that given two hyperplane arrangements $A$ and $A'$ in $\PP^n$ with the same combinatorial type, if $\Omega^1_{\PP^n}(\log A)$ is free then the same should hold for $\Omega^1_{\PP^n}(\log A')$ with the same splitting type.\\

The goals of the present paper are mainly twofold: firstly, given a finite set of points $Z\subset X$ on a surface $X$ and a curve $D \subset X$ containing $Z$, we introduce a natural generalization of $\Omega_X^1(\log D)$ as a particular extension, denoted by $\Omega_X^1(\log (D, Z))$, of the ideal sheaf $\Ii_{Z, D}$, of $Z$ inside $D$, by the cotangent bundle $\Omega_X^1$, such that it is singular exactly along $Z$. We will call $\Omega_X^1(\log (D, Z))$ the \emph{generalized logarithmic sheaf}   associated to the pair $(D,Z)$. Notice that when $Z=\emptyset$, one gets the usual logarithmic vector bundle $\Omega_X^1(\log (D, \emptyset))=\Omega_X^1(\log D)$. Then we propose a study of the \emph{Torelli property} in this setting: namely, in which cases, for a divisor $D \subset X$ containing $Z$, the isomorphism class of $\Omega_X^1(\log (D, Z))$,
as an $\Oo_X$-module, determines $(D,Z)$ uniquely.

This construction leads to the second goal of the present paper. Indeed, if we consider the blow-up $\pi:\widetilde{X}\rightarrow X$ along an arbitrary finite set $Z'$ and the strict transform $\widetilde{D}\subset \widetilde{X}$, it turns out that the push-forward $\pi_*\Omega_{\widetilde{X}}^1(\log \widetilde{D})$ is exactly the aforementioned sheaf $\Omega_X^1(\log (D, Z))$, where $Z=D\cap Z'$. Therefore, as soon as the Torelli property holds for $D \subset X$, then the Torelli property holds also for $\widetilde{D}$ in $\widetilde{X}$, but the reciprocal statement can be false. This prompted us to take up the study of the (in)dependency of the Torelli property when we change a divisor $D\subset X$ by its strict transform $\widetilde{D}$ in a given blow-up $\widetilde{X}$ along $Z'$ and in particular to underline the similarities and differences between the case when $Z'\subset D$ (in Section \ref{sec-projplane}) and the case when $Z'\cap D=\emptyset$ (in Section \ref{sec-blowup}).

In order to launch the research following this strategy, we decided in this paper to focus on the first natural case that one can consider, namely on
(generalized) logarithmic sheaves on blow-ups of $\PP^2$ and, in particular, on cubic surfaces $S\subset\PP^2$. In the first aforementioned case, namely when we consider just set of points in $\PP^2$ contained in the divisors under consideration, we managed to extend the main result from \cite{UY2} and prove that the Torelli property does not hold for pairs $(D,Z)$ consisting of a smooth plane cubic curve $D$ of Sebastiani-Thom type and a finite set of points $Z\subset D$ unless in the trivial situation when $Z$ already determines the curve itself (see Theorem \ref{STP}).  On the other hand, when the set of points has empty intersection with the curve, we placed ourselves into the classical setting of cubic curves $S\subset\PP^3$ seen as the blow-up $S=\mathrm{Bl}_Z{\PP^2}\rightarrow\PP^2$ of six general points $Z$ in $\PP^2$ and proved that the linear systems of strict transforms of lines $|\Oo_S(L)|$ (see Theorem \ref{torr-k}), conics $|\Oo_S(2L)|$ (see Proposition \ref{quad}) and cubic curves $|\Oo_S(3L)|$ (see Proposition \ref{prop52}) satisfy the Torelli property. In this setting, we proved that the logarithmic bundle $\Omega_S^1(\log D)$ with $D\in|\Oo_S(L)|$ (resp. $D\in|\Oo_S(2L)|$) is a $\mu$-stable vector bundle with respect to the hyperplane section $\Oo_{S}(1)$ with Chern classes $(K_S+L,7)$ (resp. $(K_S+2L,7)$) and therefore the induced map from the complete linear system $\PP \mathrm{H}^0(\Oo_S(D))$ to the respective moduli space of $\mu$-stable bundles with fixed Chern classes is generically one-to-one (cf. Theorem \ref{stab1}, resp. Theorem \ref{stab2})).
\\

Let us explain now the structure of the paper.

In Section \ref{preliminaries}, after recalling the required concepts from the theory of logarithmic sheaves and fixing the notation, we prove a key technical result (see Lemma \ref{keylem}) which describes the restriction of logarithmic tangent sheaves to a given curve. This result will reveal itself of extreme importance in the subsequent parts of the work in order to determine the splitting type of a given sheaf when restricted to a rational curve. Furthermore, in the case where $X$ is a smooth projective surface we introduce the notion of \textit{generalized logarithmic sheaf} $\Omega^1_X(\log(D,Z))$ on $X$ associated to the pair $(D,Z)$, being $Z$ a fixed set of points in the divisor $D$. We conclude the section studying the basic properties of this sheaf.

In Section \ref{toreliness} we are going to set up the main problem we are interested in, namely, up to which point the generalized logarithmic sheaf $\Omega^1_X(\log(D,Z))$ determines the pair $(D,Z)$. By analogy with other well-studied settings, we are going to call it the {\it Torelli property} for generalized logarithmic sheaves. Moreover, we are going to see how the Torelli property is affected under blow-ups, depending on whether the blow-up is done on points contained (in Subsection \ref{subsec-pointsin}) or not (in Subsection \ref{subsec-pointsout}) in the divisors under consideration.

The first situation, namely generalized logarithmic sheaves $\Omega^1_X(\log(D,Z))$ 
with $Z\subset D$ is studied deeply in Section \ref{sec-projplane} for the particular case of $X$ being the projective plane. The goal of this part is to give a complete picture for generalized logarithmic sheaves associated to a plane curve $D$ with a fixed set of points.
Recall that it is already known that the Torelli property holds for a curve $D$ of degree at least $3$ and not of Sebastiani-Thom type; see \cite{UY}. In particular we prove that:
\begin{itemize}
    \item the Torelli property holds for a line with at least two of its points fixed (so determining the line itself), and
    \item the Torelli property holds for a conic with at least three of its points fixed; see Theorem \ref{torr-k}, and
    \item the Torelli property holds for a cubic curve of Sebastiani-Thom type  if and only if the fixed points determine the curve; see Theorem \ref{STP}. 
\end{itemize}

Finally, in Section \ref{sec-blowup} we tackle the opposite situation, i.e., we deal with (generalized) logarithmic sheaves of the strict transform of curves $C\subset \PP^2$  under blow-ups on set of points $Z\subset\PP^2$ with $Z\cap C=\emptyset$. In particular, if $S\subset\PP^3$ is a smooth cubic surface and $D$ belongs to the linear systems $\mid \Oo_S(L) \mid$ or $\mid \Oo_S(2L) \mid$, being $L$ the pullback of a line in $\PP^2$ throughout the blow-up map, we prove (Theorem \ref{stab1} and Theorem \ref{stab2}) that:
\begin{itemize}
    \item the logarithmic bundle $\Omega_S^1(\log D)$ is stable, with respect to the polarization $H$  given by the hyperplane section, with Chern classes $(c_1, c_2)=\left(K_S+jL, 7\right)$. Furthermore, the Torelli property holds for $D$. Hence, the induced rational map between the moduli spaces of divisors and of stable sheaves
\[
\Psi_j : \PP \mathrm{H}^0(\Oo_S(jL)) \dashrightarrow \mathbf{M}_S^H(K_S+jL, 7),
\]
for $j=1,2$, is generically one-to-one. 
\end{itemize}

\noindent\textbf{Acknowledgements.} All the authors would like to thank Sungkyunkwan University for its hospitality and for providing the best working conditions. They also want to thank the referees for many useful comments.


\section{Preliminaries}\label{preliminaries}

Throughout the paper we are going to work over the field of complex numbers. Let us start recalling the construction of logarithmic sheaves; see \cite{Saito} for more details.
Let $X$ be a smooth irreducible projective variety of dimension $n\ge 2$, and let $D=D_1+\dots + D_m$ be an effective and reduced divisor on $X$ with simple normal crossings, where each $D_j$ is irreducible, i.e., each $D_j$ is smooth and they intersect transversally: we also denote $\cup_{j=1}^m D_j$ by $D$. Then one can consider the sheaf of differential $1$-forms with logarithmic poles along $D$, called the {\it logarithmic sheaf} on $X$ associated to $D$, denoted by $\Omega_X^1(\log D)$. This sheaf turns out to be locally free, and the sections of $\Omega_X^1(\log D)$ around a point $x\in X$ are meromorphic $1$-forms of the form $\omega+\sum_{i=1}^k u_i \left(\frac{d z_i}{z_i}\right)$, where $z_1\cdots z_k=0$ is the local defining equation of $D$. Here, $\omega$ is holomorphic $1$-form and $u_i$'s are holomorphic functions near $x$. The Poincar\'e residue morphism is locally defined by
\[
\begin{array}{llll}
\mathrm{res}: & \hspace{2cm}\Omega_X^1(\log D) & \rightarrow & \oplus_{j=1}^m\Oo_{D_j}\\ 
       &  \sum_{i=1}^k u_i \left(\frac{d z_i}{z_i}\right) + \sum_{i=k+1}^n u_i dz_i & \mapsto  &(u_1(x),\dots, u_k(x), 0, \dots 0), 
\end{array}
\]
and it fits into the canonical exact sequence
\begin{equation}\label{sseeq1}
0\to \Omega_X^1 \to \Omega_X^1(\log D) \stackrel{\mathrm{res}}{\xrightarrow{\hspace*{.5cm}} } \oplus_{j=1}^m \Oo_{D_j} \to 0.
\end{equation}
The dual of $\Omega_X^1(\log D)$ is called the {\it logarithmic tangent sheaf} associated to $D$, denoted by $\Tt_X(-\log D)$; it is the sheaf of holomorphic vector fields tangent to $D$. Then, dualising the sequence (\ref{sseeq1}), we get the following:
\begin{equation}\label{sseeq3}
0\to \Tt_X(-\log D) \to \Tt_X \stackrel{\psi}{\longrightarrow} \oplus_{j=1}^m \Oo_{D_j}(D_j) \to 0.
\end{equation}

\begin{remark}
Before stating Lemma \ref{keylem} let us spell out here the sequence (\ref{sseeq3}). Take an open subset $U \subset X$ and recall that, denoting by $\phi_U$ the equation defining $D$ locally in $U$, we have
\[
\Tt_X(-\log D)(U) = \left\{\partial \in \mathrm{Der} \left(\Oo_X(U),\Oo_X(U)\right) \: |\: \partial(\phi_U) \in ( \phi_U) \right\}. 
\]
Here, $( \phi_U)$ is the ideal in $\Oo_X(U)$ generated by $\phi_U$. With this setting, the map $\psi$ in the sequence (\ref{sseeq3}) is defined by $\psi(\partial) = \left[\partial(\phi_U)\right]$, where $[-]$ denotes the equivalent class in $\Oo_D(U)$.
\end{remark}

\subsection{The Key Restriction Lemma}

We will see that one of our main concerns is to compare logarithmic sheaves. In order to do that, one of our main strategies is to study the restriction of the logarithmic sheaf to embedded curves. The following lemma will be our central tool.

\begin{lemma}[Key Restriction Lemma]\label{keylem}
Let $X$ be a smooth variety and $D$ an effective and reduced divisor on $X$ with simple normal crossings. Denoting by $D_{\mathrm{sm}}$ the smooth locus of the divisor, consider a smooth curve $i: C \hookrightarrow X$ such that
\[
D \cap C = \sum_{i=1}^p a_i P_i, \mbox{ with } P_i \in D_{\mathrm{sm}} \mbox{ and } a_i\in \mathbb{Z}_{>0},
\]
and the subscheme $B = (D \cap C)_{\mathrm{red}}$. Then, there exists a commutative diagram
\begin{equation}\label{keydiag}
\begin{tikzcd}[
 column sep=small,row sep=small,
  ar symbol/.style = {draw=none,"\textstyle#1" description,sloped},
  isomorphic/.style = {ar symbol={\cong}},
  ]
 &0   \ar[d] &0 \ar[d]  &0 \ar[d]\\
0\ar[r] &  \Tt_C(-\log B) \ar[r]\ar[d, "\alpha~", labels=left]& i^*\left(\Tt_X(-\log D)\right) \ar[r] \ar[d] & \mathcal{N}_{\log D/B} \ar[r]\ar[d] & 0\\
0\ar[r] & \Tt_C  \ar[r, "\beta"]\ar[d] &i^*\Tt_X  \ar[d,"\psi_C~",labels=left] \ar[r] &\mathcal{N}_{C|X}  \ar[r]\ar[d]& 0\\
0\ar[r] &\Oo_B\ar[r] \ar[d]&  \oplus_{j=1}^m i^*\Oo_{D_j}(D_j) \ar[r] \ar[d] & \Oo_{\widetilde{B}}\ar[r]\ar[d]& 0\\
&0  &0   &0
\end{tikzcd}
\end{equation}
with $\widetilde{B}=\sum_{i=1}^p (a_i-1)P_i$. We will call the coherent sheaf $\mathcal{N}_{\log D/B}$ the logarithmic normal sheaf of $C\subset X$ with respect to the divisor $D$. Furthermore, if we consider a proper subset $B \subsetneq (D \cap C)_{\mathrm{red}}$ the statement is false, i.e, to get a commutative diagram as (\ref{keydiag}), we must consider all the (reduced) points given by the intersection of the divisor $D$ with the curve $C$.

\end{lemma}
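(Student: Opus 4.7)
The strategy is to assemble the $3\times 3$ diagram from three sequences that are already understood, glue them via a single local residue computation, and then invoke the Nine Lemma to produce the remaining row and column.

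First I would write down the three starting pieces: the middle column as the pullback along $i$ of the defining sequence (\ref{sseeq3}) for $\Tt_X(-\log D)$; the middle row as the standard exact sequence $0\to\Tt_C\to i^*\Tt_X\to\mathcal{N}_{C|X}\to 0$ associated to the smooth embedding $C\hookrightarrow X$; and the left column as the sequence (\ref{sseeq3}) applied intrinsically on $C$ with the reduced divisor $B$, yielding the residue map $\alpha:\Tt_C\to\Oo_B$. Left-exactness of the pullback in the middle column reduces to the vanishing of $\Tor_1^{\Oo_X}(\Oo_C,\bigoplus_j\Oo_{D_j}(D_j))$, which holds because $C$ is not contained in any component $D_j$.

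The heart of the argument is then to verify the commutativity of the upper-left square: the composition $\psi_C\circ\beta:\Tt_C\to i^*\Tt_X\to\bigoplus_j i^*\Oo_{D_j}(D_j)$ must factor as $\Tt_C\xrightarrow{\alpha}\Oo_B\hookrightarrow\bigoplus_j i^*\Oo_{D_j}(D_j)$ through a natural injection. This is a local check at each $P_i$: choose coordinates $(z_1,\ldots,z_n)$ on $X$ so that the unique component $D_{j_i}$ through $P_i$ has equation $z_1=0$, and a uniformizer $t$ on $C$ at $P_i$ satisfying $z_1\circ i=t^{a_i}u$ for a unit $u$. Then $i^*\Oo_{D_{j_i}}(D_{j_i})$ is locally $\Oo_C(a_iP_i)/\Oo_C$, and a direct residue calculation sends $\partial_t\in\Tt_C$ to $a_i/t+u'/u\pmod{\Oo_C}$, whose class lies in the length-$1$ submodule $\Oo_C(P_i)/\Oo_C\simeq\Oo_{P_i}$. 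This simultaneously supplies the inclusion $\Oo_B\hookrightarrow\bigoplus_j i^*\Oo_{D_j}(D_j)$, whose cokernel at $P_i$ is $\Oo_C(a_iP_i)/\Oo_C(P_i)$ of length $a_i-1$, so the bottom row is realized with $\widetilde{B}=\sum_i(a_i-1)P_i$.

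With the bottom row and the commutativity of the left square established, the Nine Lemma immediately produces the top row and the right column, the latter of which defines $\mathcal{N}_{\log D/B}$. For the last claim, if some $P_i$ were dropped from $B$, then the left column would force $\alpha(\partial_t)=0$ at $P_i$, while the computation above gives $\psi_C(\beta(\partial_t))\neq 0$ there, contradicting the commutativity of the upper-left square. The main technical obstacle is precisely this local residue calculation, namely identifying the image of $\psi_C\circ\beta$ with the reduced skyscraper $\Oo_B$ inside the length-$a_i$ stalks of $i^*\Oo_{D_{j_i}}(D_{j_i})$; once that is in hand, the rest is formal diagram chasing.
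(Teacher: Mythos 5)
Your proposal is correct and follows essentially the same route as the paper: the substance in both is the local computation at each $P_i$ showing that $\psi_C\circ\beta$ kills the logarithmic vector field (so $\Tt_C(-\log B)$ lifts to $i^*\Tt_X(-\log D)$) while sending $\partial_t$ to a nonzero class generating the length-one socle of the length-$a_i$ stalk of $i^*\Oo_{D_{j_i}}(D_{j_i})$, after which the diagram and the necessity of taking all of $(D\cap C)_{\mathrm{red}}$ follow formally. The only differences are presentational: you phrase the computation via a uniformizer $t$ with $z_1\circ i=t^{a_i}u$ and explicitly invoke the $\Tor$-vanishing and the Nine Lemma, where the paper works with an explicit presentation $f=x_1^a+\sum_j x_jf_j$ and assembles the diagram by hand.
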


\begin{proof}
Since we are dealing with a local problem, we can restrict ourselves to the case when $p=1$, namely $D \cap C=aP$ and $B=\{P\}\subset C$. Let us consider an affine open neighborhood $U=\mathrm{Spec}\left(  \CC [x_1, \dots, x_n]/I \right)\subset X$ of $P=(0,\dots, 0)$ for which 
\[
C\cap U=V(x_2,\dots, x_n) \text{ and }D\cap U=V(f(x_1,\dots,x_n))
\]
with $f\in \CC[x_1, \dots, x_n]/I$. From the hypothesis that $D \cap C\cap U=aP$ with $a\geq 1$, we see that $f(x_1,0,\dots,0)=x_1^a$, and in particular $f$ has a presentation of the form:
\[
f(x_1,\dots, x_n)= x_1^a+\sum_{j=2}^s x_j \cdot f_j(x_1,\dots,x_n)
\]
Let us make now explicit the map $\psi_C$ in the diagram (\ref{keydiag}) which is induced from $\psi$ in (\ref{sseeq3}). It is defined as
\[
\begin{array}{rccc}
\psi_C(U): = \psi \otimes id : & \Tt_X(U) \otimes \Oo_C(U) & \longrightarrow  & \Oo_{D}(D)(U) \otimes \Oo_C(U)\\
& \partial \otimes a & \mapsto &\left[a\partial(f)\right]
\end{array}
\]
where $f$ denotes the local equation of $D$ in $U$ and, in this case, $[-]$ denotes the equivalent class in $\Oo_{D \cap C}(U)$, i.e., modded out by the ideal $(f,x_2,\dots,x_n)$. We will now prove that $\psi_C\circ\beta\circ\alpha=0$. Because of the natural inclusion $\Tt_C \subset i^*(\Tt_X)$, we can choose the local coordinates in $U\cap C$ to be one of the local coordinates of $U$ inside $X$ so that
\[
\alpha(\Tt_C(-\log B)(U))=\CC \{x_1 \partial_{x_1}\}\subset \Tt_C(U).
\]
Thus we have
\[
x_1 \partial_{x_1}(f)=ax_1{\cdot}x_1^{a-1}+x_1\left(\sum_{j=2}^s x_j\partial_{x_1}f_j\right)
\]
which, after modded out by $(f,x_2,\dots,x_n)=(x_1^a,x_2,\dots, x_n)$, is zero. This means that we have an inclusion
\[
\Tt_C(-\log B) \hookrightarrow i^*\left(\Tt_X(-\log D)\right)
\]
which gives the following commutative diagram
\[
\begin{tikzcd}[
 column sep=small,row sep=small,
  ar symbol/.style = {draw=none,"\textstyle#1" description,sloped},
  isomorphic/.style = {ar symbol={\cong}},
  ]
 &0   \ar[d] &0 \ar[d]  \\
0\ar[r] &  \Tt_C(-\log B) \ar[r]\ar[d]& i^*\left(\Tt_X(-\log D)\right) \ar[d] & \\
0\ar[r] & \Tt_C  \ar[r]\ar[d] &i^*\left(\Tt_X\right) \ar[d,"~\psi_C"] & \\
 & \Oo_B \ar[r,"h"] \ar[d]&  \oplus_{j=1}^m i^*\left(\Oo_{D_j}(D_j)\right)  \ar[d] &\\
&0  &0.
\end{tikzcd}
\]
It remains to prove that $h$ is injective. In order to show this, recall that the open subset $U\subset X$ satisfies 
\[
B\cap U = P \mbox{ and } C\cap D \cap U = aP,
\]
having both schemes  as support a single point. In this case, the restriction map $h_{|U}$ is either injective or zero. The latter case is impossible; then we would have that $\psi_C\circ\beta=0$. However, considering the element $\partial_{x_1}\in \Tt_C(U)$ that can be seen as well as one element in a basis for $\Tt_X(U)$, we have
\[
\partial_{x_1}f\equiv x_1^{a-1}\not\equiv 0 \hspace{.5cm}(~\mathrm{mod} (x_1^a,x_2,\dots ,x_n)).
\]

To prove the final part of the statement, we can restrict once again to the case $D\cap C = aP$, but now being $B$ empty. In this case, Diagram (\ref{keydiag}) would imply an injection $\Tt_C \hookrightarrow i^*\left(\Tt_X(-\log D)\right)$. This is clearly a contradiction because, using the same setting and notation as in the first part of the proof, the derivation $\partial_{x_1}$ belongs to $\Tt_C(U)$ but not to $i^*\left(\Tt_X(-\log D)\right)(U)$.

\end{proof}

\begin{remark}
Assume that $\dim X=2$, and so the logarithmic tangent sheaf $\Tt_X(-\log D)$ is always locally free. Then the proof of Lemma \ref{keylem} shows that the condition of Lemma \ref{keylem} can be weakened. Indeed, $D$ does not necessarily have to have simple normal crossings and $C$ can be chosen to be an arbitrary smooth curve so that the intersection points $P_i$'s do not have to be in $D_{\mathrm{sm}}$. 
\end{remark}

To underline the importance of the previous result, we will recover, by means of Lemma \ref{keylem}, well known descriptions of logarithmic tangent sheaves. In particular, we will see how to recover the vector bundle by studying its restriction to lines and, more in general, to rational curves. Thanks to a well known result due to Grothendieck (see \cite{Groet} and \cite[I.2-I.3]{OSS}), a vector bundle $\mathcal{E}$ on $\PP^1$ splits as a sum of line bundles. Therefore, given a vector bundle $\mathcal{E}$ on $X$ and a line $L \subset X$, we have 
\[
\Ee_{|L} \cong \oplus _{i=1}^r{\mathcal O}_{L}(a_i^L) \quad \text{ with } a_1^L\leq \dots \leq a_r^L,
\]
and we call $(a_1^L,  \dots,  a_r^L)\in \ZZ^{\oplus r}$ the \textit{splitting type} of $\Ee$ on $L$. Moreover, in case of $X = \PP^n$, there exists a non-empty open subset $U$ of the Grassmannian of lines $\mathrm{Gr}(2,n+1)$ and an ordered set of integers $(a_1, \dots , a_r)$ such that $\Ee_{|L}\cong  \oplus _{i=1}^r{\mathcal O}_{L}(a_i)$ for all $L\in U$. For every line $L \in U$, the given splitting is called the \emph{generic splitting type} of $\Ee$ and the set $S(\Ee):=\mathrm{Gr}(2, n+1)\setminus U$ is called the set of \emph{jumping lines}. A vector bundle $\Ee$ on $\PP^n$ is said to be {\it uniform} if $S(\Ee)=\emptyset$, so, when all lines have the same splytting type. As an application, recall for example that the study of $S(\Ee)$ can be used to understand the geometry of moduli space of stable vector bundles; see \cite{Barth}. We will adapt, in Definition \ref{def-jumping}, the concept of jumping line for coherent sheaves and, indeed, the studying of their locus will allow us to recover the divisor from the sheaf of logarithmic differentials.

\begin{example}\label{exmm12}
Let $D$ be a smooth conic in $\PP^2$ and apply Lemma \ref{keylem} to a line $L$. If $L$ is tangent to $D$, Lemma \ref{keylem} produces an exact sequence
\[
0\to \Oo_L(1)  \to \left( \Tt_{\PP^2}(-\log D) \right)_{|L} \to \Oo_L\to 0, 
\]
and so we get $\left( \Tt_{\PP^2}(-\log D) \right)_{|L} \cong \Oo_L\oplus \Oo_L(1)$. Similarly, if $L$ intersects $D$ transversally, we get the same splitting. In particular, the bundle $\Tt_{\PP^2}(-\log D)$ has the same splitting over any line, i.e. it is uniform; see \cite{OSS}. By \cite{vdv}, $\Tt_{\PP^2}(-\log D)$ is isomorphic to either $\Oo_{\PP^2}\oplus \Oo_{\PP^2}(1)$ or $\Tt_{\PP^2}(-1)$. From $\mathrm{h}^0(\Tt_{\PP^2})=8$ and the sequence (\ref{sseeq3}), we see that the former is impossible. Hence we have $\Tt_{\PP^2}(-\log D) \cong \Tt_{\PP^2}(-1)$; see \cite{Angelini} for the generalization. 
\end{example}

\begin{example}
For a divisor $D=L_1+\dots + L_m$ of $m$ distinct lines in $\PP^2$, call the maximal number of lines in $D$ passing through a point, the {\it multiplicity} of $D$, and denote it by $m(D)$; for the generic case we have $m(D)=2$. Pick a point $p\in \PP^2$ achieving $m(D)$ and consider a general line $L$ through $p$. Applying Lemma \ref{keylem} to the pair $(D, L)$, we obtain
\[
0\to \Oo_L\left((1-m+m(D)\right) \to \left( \Tt_{\PP^2}(-\log D) \right)_{|L} \to \Oo_L\left(2-m(D)\right) \to 0.
\]
If $m(D)$ is big enough so that we have $2m(D) \ge m$, the sequence splits, i.e., 
\[
\left( \Tt_{\PP^2}(-\log D) \right)_{|L}  \cong  \Oo_L\left((1-m+m(D)\right) \oplus \Oo_L\left(2-m(D)\right). 
\]
On the other hand, we have
\begin{align*}
c_1(\Tt_{\PP^2}(-\log D))& = 3-m,\\
c_2(\Tt_{\PP^2}(-\log D))&=\sum_{x\in \PP^2}\left (s(x)-1\right) +3-2m,
\end{align*}
where $s(x)$ is the number of lines in $D$ through $x$. Assume further that $2m(D)\ge m+1$ so that 
\[
c_1(\Tt_{\PP^2}(-\log D)\otimes \Oo_{\PP^2}(m-1-m(D))) \le 0 \quad \mbox{and} \quad c_2(\Tt_{\PP^2}(-\log D)\otimes \Oo_{\PP^2}(m-1-m(D)))=0.
\]
Then, by \cite[Corollary 2.13]{EF} or \cite[Lemma 2.4]{FV} we obtain
\[
\Tt_{\PP^2}(-\log D)\otimes \Oo_{\PP^2}(m-1-m(D))  \cong \Oo_{\PP^2} \oplus \Oo_{\PP^2}(m+1-2m(D)).
\]
In particular, $\Tt_{\PP^2}(-\log D)$ is a direct sum of two line bundles so that $D$ is a free divisor by definition. For example, consider a divisor $D=L_1+\dots + L_m$ of $m$ distinct lines, all passing through a fixed point $p$, i.e., $m(D)=m$. Since we have $c_2(\Tt_{\PP^2}(-\log D)\otimes \Oo_{\PP^2}(-1))=0$, we get that $D$ is free with $\Tt_{\PP^2}(-\log D) \cong \Oo_{\PP^2}(1)\oplus \Oo_{\PP^2}(2-m)$. As the second example, for fixed two distinct points $p,q$ in $\PP^2$, consider a divisor 
\[
D=\overline{pq}+ (L_1+ \dots+ L_a)+ (L_1'+ \dots+ L_b')
\]
of distinct $m=a+b+1$ lines in $\PP^2$ such that $\cap_{i=1}^aL_i =\{p\}$ and $\cap_{j=1}^b L_j'=\{q\}$. From the argument above the divisor $D$ is free. On the other hand, by the same method of applying Lemma \ref{keylem}, we can see that the logarithmic vector bundle associated to the divisor $D^{\circ}=(L_1+ \dots+ L_a)+ (L_1'+ \dots+ L_b')$ is not uniform. Assume now that $2m(D)=m$ and $c_2(\Tt_{\PP^2}(-\log D)\otimes \Oo_{\PP^2}(m(D)-2))=0$. Then by the same argument in the above, we get
\[
\Tt_{\PP^2}(-\log D) \cong \Oo_{\PP^2}(2-m(D)) \oplus \Oo_{\PP^2}(1-m(D)). 
\]
For example, a divisor of $6$ lines with $4$ triple points and $3$ double points is free with splitting type $(-1,-2)$. 
\end{example}

\subsection{Generalized logarithmic sheaves}

Now assume that $X$ is a smooth projective surface and $D=\sum_{j=1}^m D_j \subset X$ is a divisor with each $D_j$ irreducible. Fix $k$ distinct points $Z=\{p_1, \dots, p_k\}\subset D_{\mathrm{sm}}$, with $D_{\mathrm{sm}}$ denoting the smooth locus of the divisor. By reordering the index, we may assume that $Z_j=\{p_{a_j}, \dots, p_{a_{j+1}-1}\}$ with $a_1\le \dots \le a_k$ and $Z_j$ is contained in $D_j$ so that $Z=Z_1 \sqcup \dots \sqcup Z_m$; if $a_j=a_{j+1}$, we have $Z_j=\emptyset$. Denoting by $\widetilde{X}=\mathrm{Bl}_Z X$ the blow-up of $X$ along $Z$, one can consider the logarithmic vector bundle $\Omega_{\widetilde{X}}^1(\log \widetilde{D})$, where $\widetilde{D}=\widetilde{D}_1+ \dots + \widetilde{D}_m$ with $\widetilde{D}_j$ the strict transform of $D_j$ via the blow-up $\pi : \widetilde{X} \rightarrow X$. Notice that $\widetilde{D}$ is still a divisor with simple normal crossings in $\widetilde{X}$. By applying the push-forward functor $\pi_*$ to the Poincar\'e residue sequence for $(\widetilde{X}, \widetilde{D})$, one obtains
\begin{equation}\label{lexact}
0\to \pi_*\Omega_{\widetilde{X}}^1 \to \pi_*\Omega_{\widetilde{X}}^1(\log \widetilde{D})\to \oplus_{j=1}^m \Oo_{D_j} \to \mathbf{R}^1\pi_*\Omega_{\widetilde{X}}^1 \to  \mathbf{R}^1\pi_*\Omega_{\widetilde{X}}^1(\log \widetilde{D})\to 0.
\end{equation}

\begin{lemma}\label{lemmm2}
Let $E_i=\pi^{-1}(p_i)$ be the exceptional divisor of $\pi$ at $p_i$. 
\begin{itemize}
    \item [(a)] $\Omega_{\widetilde{X}|X}^1 \cong \oplus_{i=1}^k \Oo_{E_i}(2E_i)$. 
    \item [(b)] $\mathbf{R}^i\pi_*\Omega_{\widetilde{X}|X}^1 \cong \oplus_{j=1}^k \mathrm{H}^i(\Oo_{E_j}(2E_j))\otimes \Oo_{p_j}$, for $i \geq 0$. 
\end{itemize}
\end{lemma}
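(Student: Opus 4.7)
The plan for part (a) is to identify $\Omega^1_{\widetilde X | X}$ with the relative cotangent sheaf of $\pi$, i.e.\ with the cokernel of $\pi^*\Omega^1_X \to \Omega^1_{\widetilde X}$. I would start from the relative cotangent sequence
\begin{equation*}
0 \longrightarrow \pi^*\Omega_X^1 \longrightarrow \Omega_{\widetilde X}^1 \longrightarrow \Omega^1_{\widetilde X | X} \longrightarrow 0,
\end{equation*}
where injectivity on the left is automatic because $\pi$ is a birational morphism between smooth surfaces which is an isomorphism outside the exceptional locus. In particular $\Omega^1_{\widetilde X | X}$ is a coherent sheaf supported on $E=\sqcup_{i=1}^k E_i$, and since the exceptional divisors are pairwise disjoint the sheaf decomposes as a direct sum of summands supported on each $E_i$. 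This reduces the problem to the case of a single blow-up centre, which I would handle via the standard atlas.

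For the local computation I would pick coordinates $(x,y)$ on $X$ near $p_i$ and work in the two usual charts $(x,u)$ with $y=xu$, and $(v,y)$ with $x=vy$, on $\widetilde X$. The identity $dy=u\,dx+x\,du$ shows that, modulo $\pi^*\Omega^1_X$, the sheaf $\Omega^1_{\widetilde X | X}$ is generated in the first chart by $du$ with annihilator $(x)$, so it equals $\Oo_{E_i}\cdot du$ there; an analogous computation works in the other chart. The cocycle resulting from $v=1/u$, hence $dv=-du/u^2$, is precisely the transition function of $\Omega^1_{E_i}$ on $E_i\cong \PP^1$. Since $E_i^2=-1$ yields $\Oo_{E_i}(E_i)=N_{E_i/\widetilde X}\cong\Oo_{\PP^1}(-1)$, one has $\Oo_{E_i}(2E_i)\cong\Oo_{\PP^1}(-2)\cong\Omega^1_{E_i}$, giving the asserted isomorphism.

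For (b), the strategy is to reduce the higher direct images of a sheaf supported on the contracted curves to cohomology of each curve separately. Writing $\iota_j\colon E_j\hookrightarrow\widetilde X$ and $\mu_j\colon\{p_j\}\hookrightarrow X$ for the closed immersions and $\rho_j\colon E_j\to\{p_j\}$ for the constant map, the equality $\pi\circ\iota_j=\mu_j\circ\rho_j$ together with the exactness of $\iota_{j,*}$ and $\mu_{j,*}$ yields, via the composition spectral sequence for derived pushforwards,
\begin{equation*}
\mathbf{R}^i\pi_*\bigl(\iota_{j,*}\Oo_{E_j}(2E_j)\bigr)\;\cong\;\mu_{j,*}\mathrm{H}^i\bigl(E_j,\Oo_{E_j}(2E_j)\bigr)\;\cong\;\mathrm{H}^i\bigl(E_j,\Oo_{E_j}(2E_j)\bigr)\otimes\Oo_{p_j}.
\end{equation*}
Summing over $j$ and invoking part (a), the claim (b) follows.

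The step that I expect to require the most care is the local identification in (a): one has to show that the generator $du$ found chart by chart genuinely assembles into the line bundle $\Oo_{E_i}(2E_i)$ with its globally correct twist, rather than just into an abstract line bundle of degree $-2$ on $\PP^1$. The rest of the argument, including the reduction to a single blow-up point and the fiberwise computation underlying (b), is essentially formal.
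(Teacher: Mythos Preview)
Your proof is correct and complete. The concern you flag at the end is unwarranted: on $E_i\cong\PP^1$ a line bundle is determined by its degree, so once you have a cocycle of degree $-2$ you are done, and your computation of the transition function $dv=-u^{-2}\,du$ does exactly that.

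Your route differs from the paper's. The paper starts on the tangent side with Fulton's blow-up sequence $0\to\Tt_{\widetilde X}\to\pi^*\Tt_X\to\mathcal Q\to 0$, identifies $\mathcal Q$ via the tautological sequence $0\to\Oo_E(-1)\to\Oo_E^{\oplus 2}\to\mathcal Q\to 0$ as $\Oo_E(1)$, and then dualizes to obtain $\Omega^1_{\widetilde X|X}\cong\mathcal{E}xt^1_{\widetilde X}(\mathcal Q,\Oo_{\widetilde X})\cong\Oo_E(-2)$. This is more conceptual and works uniformly in higher dimension, at the cost of the external reference and the $\mathcal Ext$ computation. Your explicit chart calculation is more elementary and self-contained, and makes the identification with $\Omega^1_{E_i}$ transparent. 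For part (b) the paper simply asserts that it follows from the definition of $\mathbf R^i\pi_*$; your argument via $\pi\circ\iota_j=\mu_j\circ\rho_j$ and the degeneration of the composition spectral sequence is a clean way to make that assertion precise.
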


\begin{proof}
The sequence (iv) of \cite[Lemma 15.4]{Fulton} in our setting is as follows:
\begin{equation}\label{werpo}
0\to \Tt_{\widetilde{X}} \to \pi^* \Tt_X \to \mathcal{Q} \to 0
\end{equation}
for the sheaf $\mathcal{Q}$ fitting into the sequence (i) of \cite[Lemma 15.4]{Fulton}
\[
0\to \Oo_E(-1) \to \Oo_E^{\oplus 2} \to \mathcal{Q} \to 0
\]
with $E=E_1+\dots + E_k$. Thus we have $\mathcal{Q} \cong \Oo_E(1)$. Dualizing the sequence (\ref{werpo}), we get 
\begin{equation}\label{eqa2}
0\to \pi^*\Omega_X^1 \to \Omega_{\widetilde{X}}^1 \to \Omega_{\widetilde{X}|X}^1 \to 0,
\end{equation}
where $\Omega_{\widetilde{X}|X}^1 \cong \mathcal{E}xt_{\widetilde{X}}^1(\mathcal{Q}, \Oo_{\widetilde{X}}) \cong \Oo_E(-2)\cong \oplus_{i=1}^k \Oo_{E_i}(2E_i)$, proving the assertion (a). The assertion (b) follows directly from the definition of higher direct image $\mathbf{R}^i\pi_*(-)$. 
\end{proof}

\begin{proposition}\label{prop1}
The push-forward $\pi_*\Omega_{\widetilde{X}}^1(\log \widetilde{D})$ fits into an exact sequence
\begin{equation}\label{eqa22}
0\to \Omega_X^1 \to \pi_*\Omega_{\widetilde{X}}^1(\log \widetilde{D}) \to \oplus_{j=1}^m \Oo_{D_j}(-Z_j) \to 0.
\end{equation}
\end{proposition}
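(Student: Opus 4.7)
The plan is to apply $\pi_*$ to the Poincar\'e residue sequence on $(\widetilde{X},\widetilde{D})$ and to pin down the connecting morphism in the resulting five-term exact sequence. First, since each $p_i \in Z_j$ lies in $D_{\mathrm{sm}}$, the strict transform $\widetilde{D}_j$ meets $E_i$ transversally at a single point, so the restriction of $\pi$ realizes $\widetilde{D}_j \to D_j$ as an isomorphism; hence $\pi_*\Oo_{\widetilde{D}_j}=\Oo_{D_j}$ and $\mathbf{R}^1\pi_*\Oo_{\widetilde{D}_j}=0$. Next, I would apply $\pi_*$ to the sequence (\ref{eqa2}) and combine Lemma \ref{lemmm2}(b) with the projection formula $\pi_*\pi^*\Omega_X^1 = \Omega_X^1$ and the standard vanishing $\mathbf{R}^1\pi_*\pi^*\Omega_X^1 = \Omega_X^1 \otimes \mathbf{R}^1\pi_*\Oo_{\widetilde{X}} = 0$ to conclude $\pi_*\Omega_{\widetilde{X}}^1 = \Omega_X^1$ and $\mathbf{R}^1\pi_*\Omega_{\widetilde{X}}^1 \cong \oplus_{i=1}^k \CC_{p_i}$.

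Plugging these identifications into (\ref{lexact}) then yields the exact sequence
\[
0 \to \Omega_X^1 \to \pi_*\Omega_{\widetilde{X}}^1(\log \widetilde{D}) \to \oplus_{j=1}^m \Oo_{D_j} \xrightarrow{\partial} \oplus_{i=1}^k \CC_{p_i} \to \mathbf{R}^1\pi_*\Omega_{\widetilde{X}}^1(\log\widetilde{D}) \to 0,
\]
so the proof reduces to identifying the connecting morphism $\partial$ with the natural evaluation map $(f_j)_j \mapsto (f_{j(i)}(p_i))_i$, where $j(i)$ is the unique index with $p_i \in D_{j(i)}$. Once this is done, $\ker \partial = \oplus_{j=1}^m \Oo_{D_j}(-Z_j)$, and exactness of (\ref{lexact}) at the first four positions delivers the desired short exact sequence (\ref{eqa22}).

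The main obstacle will be this identification, which I plan to handle by a local computation at each $p_i$. Pick affine coordinates $(x,y)$ on a neighbourhood $U$ of $p_i$ so that $D_{j(i)} \cap U = \{x=0\}$ and $p_i = (0,0)$, and work in the blow-up chart $(s,t) \mapsto (st, t)$, in which $\widetilde{D}_{j(i)} = \{s=0\}$ and $E_i = \{t=0\}$. A local section $\omega$ of $\pi_*\Omega_{\widetilde{X}}^1(\log \widetilde{D})$ over $U$, regarded as a meromorphic form with at worst simple poles along $D_{j(i)}$, takes the form $\omega = f(x,y)\,\frac{dx}{x} + g(x,y)\,dx + h(x,y)\,dy$ with $f,g,h$ holomorphic and residue $f(0,y)$ along $D_{j(i)}$. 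Using $\frac{dx}{x} = \frac{ds}{s} + \frac{dt}{t}$, the pullback
\[
\pi^*\omega = f(st,t)\,\tfrac{ds}{s} + f(st,t)\,\tfrac{dt}{t} + (\text{holomorphic terms})
\]
belongs to $\Omega_{\widetilde{X}}^1(\log \widetilde{D})$, equivalently carries no pole along $E_i$, if and only if $t \mid f(st,t)$; this is in turn equivalent to $f(0,0) = 0$, i.e.\ to the residue vanishing at $p_i$. Consequently the image of $\pi_*\Omega_{\widetilde{X}}^1(\log \widetilde{D}) \to \Oo_{D_{j(i)}}$ near $p_i$ consists exactly of germs vanishing at $p_i$, which will confirm the identification of $\partial$ and complete the argument.
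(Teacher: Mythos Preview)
Your argument is correct and takes a genuinely different, more hands-on route than the paper. Both proofs start from the five-term sequence (\ref{lexact}) and agree on the identifications $\pi_*\Omega_{\widetilde X}^1\cong\Omega_X^1$ and $\mathbf{R}^1\pi_*\Omega_{\widetilde X}^1\cong\Oo_Z$. From there the paper does \emph{not} attempt to identify $\partial$; instead it shows $\mathbf{R}^1\pi_*\Omega_{\widetilde X}^1(\log\widetilde D)=0$ by computing $\bigl(\Omega_{\widetilde X}^1(\log\widetilde D)\bigr)_{|E_i}\cong\Oo_{\PP^1}(-1)\oplus\Oo_{\PP^1}(1)$ via the Key Restriction Lemma and then invoking the Theorem on Formal Functions on the thickenings $E_i^{(n)}$. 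Surjectivity of $\partial$ then falls out, and the kernel of any surjection $\oplus_j\Oo_{D_j}\twoheadrightarrow\Oo_Z$ is automatically $\oplus_j\Oo_{D_j}(-Z_j)$. Your approach bypasses both the Key Restriction Lemma and the formal functions machinery: the explicit coordinate computation shows directly that a section of $\Omega_X^1(\log D)$ near $p_i$ lifts to $\Omega_{\widetilde X}^1(\log\widetilde D)$ precisely when its residue vanishes at $p_i$, which pins down the image of the residue map (equivalently $\ker\partial$) without ever touching $\mathbf{R}^1\pi_*$. This is more elementary and arguably more transparent; the paper's route, on the other hand, yields the vanishing of $\mathbf{R}^1\pi_*\Omega_{\widetilde X}^1(\log\widetilde D)$ as a standalone statement and illustrates a technique that scales to situations where a naive coordinate computation is less accessible.

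Two small points worth tightening in your write-up: (i) the sentence ``A local section $\omega$ of $\pi_*\Omega_{\widetilde X}^1(\log\widetilde D)$ over $U$, regarded as a meromorphic form with at worst simple poles along $D_{j(i)}$'' implicitly uses that $\pi_*\Omega_{\widetilde X}^1(\log\widetilde D)\hookrightarrow\Omega_X^1(\log D)$, which follows from Hartogs on the locally free sheaf $\Omega_X^1(\log D)$ since $\pi$ is an isomorphism off the codimension-two set $Z$; and (ii) you work in a single blow-up chart, which misses one point of $E_i$---the companion chart $(u,v)\mapsto(u,uv)$ gives the identical condition $f(0,0)=0$, so nothing is lost, but it is worth a sentence.
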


\begin{proof}
Applying the push-forward functor $\pi_*$ to (\ref{eqa2}),
together with $\pi_*\Oo_{\widetilde{X}} \cong \Oo_X$ and $\mathbf{R}^i\pi_* \Oo_{\widetilde{X}}=0$ for $i\ge 1$, we get an exact sequence
\begin{equation}\label{eqa3}
0\to \Omega_X^1 \to \pi_*\Omega_{\widetilde{X}}^1 \to \pi_*\Omega_{\widetilde{X}|X}^1 \to 0,
\end{equation}
and $\mathbf{R}^i \pi_*\Omega_{\widetilde{X}}^1 \cong \mathbf{R}^i  \pi_*\Omega_{\widetilde{X}|X}^1$ for $i\ge 1$. Then by Lemma \ref{lemmm2} we get $\pi_*\Omega_{\widetilde{X}}^1 \cong \Omega_X^1$ and $\mathbf{R}^1\pi_* \Omega_{\widetilde{X}}^1 \cong \mathbf{R}^1\pi_* \Omega_{\widetilde{X}|X}^1\cong \oplus_{i=1}^k \Oo_{p_i}=\Oo_Z$. 

Set $E=E_i$ and $p=p_i$ for some $i$. By tensoring the sequence (\ref{eqa2}) by $\Oo_E$, we obtain
\[
0\to Tor_{\widetilde{X}}^1(\Oo_E(2E), \Oo_E) \cong \Oo_E(-1) \to \left(\pi^*\Omega_X^1\right)_{|E} \to \left(\Omega_{\widetilde{X}}^1\right)_{|E} \to \Oo_E(-2) \to 0.
\]
Let us observe that, since $\left(\pi^*\Omega_X^1\right)_{|E}\cong\Oo_E^{\oplus 2}$, it turns out that $\left(\Omega_{\widetilde{X}}^1\right)_{|E}\cong \Oo_E(-2)\oplus \Oo_E(1)$ (which is consistent with the fact that $c_1\left(\left(\Omega_{\widetilde{X}}^1\right)_{|E}\right)=(\pi^*K_X+E).E=-1$). Now, given that $\widetilde{D}$ and $E$ intersect transversally at a single point $q\in E$, we obtain the following commutative diagram
\begin{equation}\label{keydiag1}
\begin{tikzcd}[
 column sep=small,row sep=small,
  ar symbol/.style = {draw=none,"\textstyle#1" description,sloped},
  isomorphic/.style = {ar symbol={\cong}},
  ]
 &0   \ar[d] &0 \ar[d]  &0 \ar[d]\\
0\ar[r] &  \Tt_{E}(-\log \{q\})\cong \Oo_{\PP^1}(1) \ar[r]\ar[d, labels=left]& \left(\Tt_{\widetilde{X}}(-\log \widetilde{D})\right)_{|E} \ar[r] \ar[d] & \mathcal{N}_{\log \widetilde{D}/\{q\}} \ar[r]\ar[d,isomorphic] & 0\\
0\ar[r] & \Tt_{E}\cong \Oo_{\PP^1}(2)  \ar[r]\ar[d] &\left(\Tt_{\widetilde{X}}\right)_{|E}  \ar[d] \ar[r] &\mathcal{N}_{E|\widetilde{X}}  \ar[r]& 0\\
 &\Oo_q\ar[r, isomorphic] \ar[d]&  \Oo_q \ar[d] \\
&0  &0 . 
\end{tikzcd}
\end{equation}

From the middle vertical exact sequence, we obtain 
\[
\left(\Omega_{\widetilde{X}}^1(\log \widetilde{D})\right)_{|E} \cong \Oo_{\PP^1}(-1)\oplus \Oo_{\PP^1}(1).
\]
In particular, we have $\mathrm{H}^1\left(\Omega_{\widetilde{X}}^1(\log \widetilde{D})_{|E}\right)=0$. Now by the Theorem on Formal Functions in \cite[Section III.11]{Hartshorne} we have
\[
\left ( \mathbf{R}^1\pi_* \Omega_{\widetilde{X}}^1(\log \widetilde{D}) \right)_p^{\wedge} \cong \lim_{\longleftarrow} \mathrm{H}^1 \left(E^{(n)}, \left( \Omega_{\widetilde{X}}^1(\log {\widetilde{D}})\right)^{(n)}\right),
\]
where $E^{(n)}=\widetilde{X}\times_{X}\mathrm{Spec}(\Oo_p/\mathbf{m}_p^n)$, with $\mathbf{m}_p$ the maximal ideal of the local ring, denotes the thickening of $E$ with order $n$, admitting the exact sequence
\begin{equation}\label{eqa6}
0\to \Oo_{E}(n) \to \Oo_{E^{(n+1)}} \to \Oo_{E^{(n)}} \to 0,
\end{equation}
and $\left( \Omega_{\widetilde{X}}^1(\log \widetilde{D})\right)^{(n)}$ is the restriction to $E^{(n)}$; refer to \cite[proof of Proposition V.3.4]{Hartshorne}. The long exact sequence of cohomology associated to the sequence (\ref{eqa6}) tensored with $\Omega_{\widetilde{X}}^1(\log \widetilde{D})$ is as follows: 
\[
0\cong \mathrm{H}^1\left(\left( \Omega_{\widetilde{X}}^1(\log \widetilde{D})\right)_{|E}(n)\right) \to \mathrm{H}^1\left(\left( \Omega_{\widetilde{X}}^1(\log \widetilde{D})\right)^{(n+1)}\right) \to \mathrm{H}^1\left(\left( \Omega_{\widetilde{X}}^1(\log \widetilde{D})\right)^{(n)}\right)\to 0.
\]
Thus we get that $ \left ( \mathbf{R}^1\pi_* \Omega_{\widetilde{X}}^1(\log \widetilde{D}) \right)_p^{\wedge} $ is trivial. 
\end{proof}

It turns out that the coherent sheaf $\pi_*\Omega_{\widetilde{X}}^1(\log \widetilde{D})$ introduced in Proposition \ref{prop1} will be the main object of study in this paper.

\begin{definition}\label{def-genlog}
We will call the push-forward $\pi_*\Omega_{\widetilde{X}}^1(\log \widetilde{D})$ the {\it generalized logarithmic sheaf} on $X$ associated to the pair $(D, Z)$, and we will denote it by $\Omega_X^1(\log (D, Z))$. The sequence (\ref{eqa2}) is rewritten as 
\begin{equation}\label{poin}
0\to \Omega_X^1 \to \Omega_X^1(\log (D, Z)) \to \oplus_{j=1}^m \Oo_{D_j}(-Z_j) \to 0, 
\end{equation}
and it is called the {\it Poincar\'e residue sequence} for $\Omega_X^1(\log (D, Z))$. When $Z=\emptyset$, one gets the usual logarithmic vector bundle $\Omega_X^1(\log (D, \emptyset))=\Omega_X^1(\log D)$. 
\end{definition}

Notice that $\Omega_X^1(\log (D, Z))$ is defined as the push-forward of a torsion-free $\Oo_{\widetilde{X}}$-sheaf with respect to a surjective map, hence, it is a torsion-free $\Oo_X$-sheaf as well. The goal of the next result is to exactly determine its singular locus. 

\begin{proposition}\label{prop234}
 The generalized logarithmic sheaf  $\Omega_X^1(\log (D, Z))$ is a torsion-free sheaf which is not locally free exactly along $Z$. Furthermore, the canonical injection into its double is described by the following short exact sequence
\begin{equation}\label{dd1}
0\to \Omega_X^1(\log (D, Z)) \to  \Omega_X^1(\log D) \to \Oo_{Z} \to 0.
\end{equation}
In particular, we have that $\left(\Omega_X^1(\log (D, Z))\right)^{\vee\vee} \simeq \Omega_X^1(\log D)$.
\end{proposition}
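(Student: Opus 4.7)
The plan is to establish the short exact sequence (\ref{dd1}) first, from which all remaining assertions of the proposition follow by general principles on reflexive sheaves on smooth surfaces. To construct the inclusion $\Omega_X^1(\log(D,Z))\hookrightarrow\Omega_X^1(\log D)$ and compute its cokernel, I would work locally at a point $p\in Z$. Since $Z\subset D_{\mathrm{sm}}$, choose local coordinates $(x,y)$ on a neighbourhood $V$ of $p$ with $D=\{y=0\}$, and work in the standard affine chart $(u,v)$ of the blow-up $\pi\colon\widetilde{X}\to X$, where $\pi(u,v)=(u,uv)$, $E=\{u=0\}$ and $\widetilde{D}=\{v=0\}$. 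A section of $\pi_*\Omega_{\widetilde{X}}^1(\log\widetilde{D})$ on $V$ is a rational $1$-form which, by Hartogs applied on $V\setminus\{p\}$, can be written as $\omega=\alpha\,dx+\beta\,dy/y$ with $\alpha,\beta\in\Oo_{X,p}$. A direct computation of $\pi^*\omega$ in the chart $(u,v)$ shows that the vanishing of its pole along $E$ amounts to the single condition $\beta(p)=0$, and the other standard chart (in which $\widetilde{D}$ is absent) produces the same condition. Hence the stalk $\Omega_X^1(\log(D,Z))_p$ is the kernel of the evaluation $\Omega_X^1(\log D)_p\to\Oo_p$ sending $\alpha\,dx+\beta\,dy/y$ to $\beta(p)$, and gluing these local identifications over $Z$ produces (\ref{dd1}).

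Granted (\ref{dd1}), torsion-freeness of $\Omega_X^1(\log(D,Z))$ is immediate from its inclusion into the locally free sheaf $\Omega_X^1(\log D)$ (equivalently, the pushforward of a torsion-free sheaf along a birational morphism is torsion-free). The same sequence shows that $\Omega_X^1(\log(D,Z))$ is locally free on $X\setminus Z$. For the non-local-freeness at points of $Z$, suppose by contradiction that $\Omega_X^1(\log(D,Z))$ is locally free (necessarily of rank $2$) at some $p\in Z$. Then the inclusion $\Omega_X^1(\log(D,Z))\hookrightarrow\Omega_X^1(\log D)$ is, locally near $p$, an inclusion of two rank-$2$ locally free sheaves on the smooth surface $X$; its cokernel is supported on the zero locus of the induced map of determinant line bundles, which is a divisor of pure codimension one, contradicting the fact that $\Oo_Z$ is supported at the isolated point $p$.

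Finally, $(\Omega_X^1(\log(D,Z)))^{\vee\vee}$ is reflexive, hence locally free on the smooth surface $X$, and it agrees with $\Omega_X^1(\log(D,Z))$ wherever the latter is locally free, namely on $X\setminus Z$. Since $\Omega_X^1(\log D)$ is also locally free and agrees with $\Omega_X^1(\log(D,Z))$ on $X\setminus Z$, and since two locally free sheaves on a smooth surface that agree outside a codimension-two subscheme necessarily coincide, I conclude $(\Omega_X^1(\log(D,Z)))^{\vee\vee}\cong\Omega_X^1(\log D)$. The main obstacle throughout is the local chart computation identifying the vanishing condition $\beta(p)=0$; the remaining parts are formal consequences of the structure of reflexive sheaves on smooth surfaces.
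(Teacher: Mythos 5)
Your proof is correct, but it follows a genuinely different route from the paper's. The paper argues globally and diagrammatically: it sets $\widetilde{D}'=\widetilde{D}+\sum_i E_i$, invokes Deligne's functoriality to get an injection $\pi^*\Omega_X^1(\log D)\hookrightarrow\Omega_{\widetilde{X}}^1(\log\widetilde{D}')$, identifies its cokernel as $\oplus_i\Oo_{E_i}(-1)$ by a snake-lemma diagram, deduces $\pi_*\Omega_{\widetilde{X}}^1(\log\widetilde{D}')\cong\Omega_X^1(\log D)$, and then pushes forward the residue-type sequence $0\to\Omega_{\widetilde{X}}^1(\log\widetilde{D})\to\Omega_{\widetilde{X}}^1(\log\widetilde{D}')\to\oplus_i\Oo_{E_i}\to 0$, pinning down the cokernel $\Kk'\cong\Oo_Z$ with a second diagram. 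You instead compute directly in the two standard charts of the blow-up and use Hartogs extension to realize the stalk at $p\in Z$ as $\{\alpha\,dx+\beta\,dy/y:\beta(p)=0\}$; I checked the chart computation ($u\mid\beta(u,uv)$ iff $\beta(p)=0$, and likewise in the second chart) and it is right, as is your surjectivity of $\beta\mapsto\beta(p)$ via the section $dy/y$. The trade-off: the paper's argument is coordinate-free and yields as by-products the identification $\Omega_X^1(\log D)\cong\pi_*\Omega_{\widetilde{X}}^1(\log(\widetilde{D}+E))$ and the structure of the cokernel of Deligne's comparison map, which are reused implicitly elsewhere; your computation is more elementary and self-contained, and it directly establishes the explicit local description of sections of $\Omega_X^1(\log(D,Z))$ at points of $Z$ that the paper only records afterwards as a separate remark. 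Your determinant argument for non-local-freeness along $Z$ (support of the cokernel of an injection of equal-rank locally free sheaves is the divisor $V(\det)$, which cannot be the zero-dimensional $Z$) and the codimension-two extension argument for the double dual are both standard and correct; the paper leaves these final deductions largely implicit.
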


\begin{proof}
Using the notation introduced at the beginning of the section, set $\widetilde{D}'=\widetilde{D} + (E_1+ \dots + E_k)$. Then by \cite[Proposition 3.2 in Chapter II]{Del} one obtains an injection 
\begin{equation}\label{injection-pullback}
\pi^*\Omega_X^1(\log D) \to \Omega_{\widetilde{X}}^1(\log \widetilde{D}')
\end{equation}
because $\pi^*\Omega_X^1(\log D)$ is locally free. Denote its cokernel by $\Kk$ and set $\widetilde{Z}=\{q_1, \dots, q_k\}$ where $q_i= \widetilde{D} \cap E_i$ are the intersection points. Then we obtain the following commutative diagram
\begin{equation}\label{ddia}
\begin{tikzcd}[
  row sep=small,
  ar symbol/.style = {draw=none,"\textstyle#1" description,sloped},
  isomorphic/.style = {ar symbol={\cong}},
  ]
&0\ar[d]&0\ar[d]&0 \ar[d]   \\
0\ar[r] &\pi^*\Omega_{X}^1 \ar[r] \ar[d] &\pi^*\Omega_X^1(\log D) \ar[d]\ar[r] & \oplus_{j=1}^m \Oo_{\widetilde{D}_j + \pi^{-1}(Z_j)} \ar[r]\ar[d,"\eta_1"] &0\\
0\ar[r] & \Omega_{\widetilde{X}}^1 \ar[r] \ar[d] & \Omega_{\widetilde{X}}^1(\log \widetilde{D}') \ar[r] \ar[d] &\oplus_{j=1}^m\left( \Oo_{\widetilde{D}_j}\oplus (\oplus_{p_i \in Z_j} \Oo_{E_i})\right) \ar[r] \ar[d]& 0\\
0\ar[r]&\oplus_{i=1}^k \Oo_{E_i}(-2) \ar[r] \ar[d] &\Kk  \ar[r] \ar[d] &\Oo_{\widetilde{Z}} \ar[r] \ar[d] & 0\\
&0&0&0.
\end{tikzcd}
\end{equation}
Note that the first horizontal sequence is the pull-back of the Poincar\'e residue sequence for $\Omega_X^1(\log D)$ and the second horizontal sequence is the Poincar\'e residue sequence for $\Omega_{\widetilde{X}}^1(\log \widetilde{D}')$. The middle terms of these two sequences are related by the injection given by (\ref{injection-pullback}). Moreover, by its definition, such an injection factorizes through $\pi^*\Omega_{X}^1 \hookrightarrow \Omega_{\widetilde{X}}^1$ and therefore we obtain the commutative upper left square. We complete the diagram by the Snake Lemma. Finally, from the lower horizontal sequence, we obtain that $\Kk \cong \oplus_{i=1}^k \Oo_{E_i}(-1)$.

Now, applying the push-forward functor $\pi_*$ to the middle vertical sequence, we obtain
\[
\Omega_X^1(\log D) \cong \pi_* \Omega_{\widetilde{X}}^1(\log \widetilde{D}').
\]
Again, take the push-forward functor $\pi_*$ to the following exact sequence
\[
0\to \Omega_{\widetilde{X}}^1(\log \widetilde{D}) \to \Omega_{\widetilde{X}}^1(\log \widetilde{D}') \to \oplus_{i=1}^k \Oo_{E_i} \to 0
\]
to obtain an injection from $\pi_* \Omega_{\widetilde{X}}^1(\log \widetilde{D}) $ into $\Omega_X^1(\log D) $, whose cokernel $\Kk'$ is a subsheaf of $\oplus_{i=1}^k \pi_*\Oo_{E_i} \cong \Oo_Z$. From the following commutative diagram 
\begin{equation}\label{ddia1}
\begin{tikzcd}[
  row sep=small,
  ar symbol/.style = {draw=none,"\textstyle#1" description,sloped},
  isomorphic/.style = {ar symbol={\cong}},
  ]
&0\ar[d]&0\ar[d]   \\
 &\Omega_{X}^1 \ar[r,isomorphic] \ar[d] &\Omega_{X}^1\ar[d] \\
0\ar[r] & \pi_*\Omega_{\widetilde{X}}^1(\log \widetilde{D}) \ar[r] \ar[d] & \Omega_X^1(\log D)\ar[r] \ar[d] &\Kk '\ar[r] \ar[d,isomorphic]& 0\\
0\ar[r]&\oplus_{j=1}^m \Oo_{D_j}(-Z_j) \ar[r] \ar[d] &\oplus_{j=1}^m \Oo_{D_j}  \ar[r] \ar[d] &\Oo_Z \ar[r] & 0\\
&0&0,
\end{tikzcd}
\end{equation}
we get that $\Kk'$ is isomorphic to $\Oo_Z$ and the assertion follows. 
\end{proof}

\begin{remark}
Recall that the sections of $\Omega_X^1(\log D)$ around a point $x\in X$ are meromorphic $1$-forms of the form $\omega+u \left(\frac{d z_1}{z_1}\right)$, where $z_1$ is the local defining equation for $D$ in the local coordinate system $\langle z_1, z_2 \rangle$. We see from Proposition \ref{prop234} that the sections of $\Omega_X^1(\log (D, Z))$ around a point $x\in Z$ are meromorphic $1$-forms of the same form as above with the additional condition that the holomorphic function $u$ vanishes at $x$. 
\end{remark}


\section{The Torelli property for generalized logarithmic sheaves}\label{toreliness}

In this section we are going to set up the main problem we are interested in, namely, up to which point the generalized logarithmic sheaf $\Omega^1_X(\log(D,Z))$ defined in the previous section determines the pair $(D,Z)$ . By analogy with other well-studied settings, we are going to call it the {\it Torelli problem} for generalized logarithmic sheaves.

\begin{question}\label{question-torelli}
Let $X$ be a projective surface, and $Z \subset X$ a finite set (possibly empty) of distinct points. For an effective line bundle $\mathcal{L}$, set $\FF(Z,\mathcal{L})\subset \PP \mathrm{H}^0(\Ll\otimes \Ii_{Z, X})$ consisting of smooth curves $D$ from the linear system $|\mathcal{L}|$ passing through $Z$. Then the question we want to address is:

\vspace{.2cm}
\begin{nscenter}
"Given two divisors $D_1,D_2\in \FF(Z,\mathcal{L})$ with $\Omega_{X}^1(\log (D_1, Z)) \cong \Omega_{X}^1(\log (D_2, Z))$,\\ do we have $D_1=D_2$?"
\end{nscenter}

\vspace{.2cm}
\noindent If the answer is positive, then we say that {\it the Torelli property holds} for (the divisors in) $\FF(Z,\mathcal{L})$.

\end{question}

First of all, in order to understand the dependency of the Torelli property with respect to the choice of the general set of points $Z\subset X$, let $Z'\subset X$ be a second set of distinct points and let $Z''=Z\cup Z'$. Let us define $\widetilde{X}:=\mathrm{Bl}_{Z}X$ and $\widetilde{X}':=\mathrm{Bl}_{Z''} X$. Since there exists an isomorphism between $\widetilde{X}\backslash\pi^{-1}(Z)$ and $X\backslash Z$, we are going to identify the points $Z'$ in $X$ with their preimages in $\widetilde{X}$. Let us consider the composition of blow-up morphisms:

\begin{equation}\label{abcd}
\pi{''}:\widetilde{X}'\stackrel{\pi'}{\longrightarrow}\widetilde{X}\stackrel{\pi}{\longrightarrow} X
\end{equation}

\noindent Now we have two possibilities to study:

\subsection{Divisors $D\subset X$ that contain the larger set of points $Z''$}\label{subsec-pointsin}

In this case we are considering divisors $D$ from the linear system $|\mathcal{L}|$ in $\FF(Z'',\mathcal{L})\subset \FF(Z,\mathcal{L})$. Therefore, in order to have a consistent definition for the Torelli property, we have to prove the following.

\begin{proposition}\label{prop-Torelliplusone}
For two given divisors $D_1,D_2\in\FF(Z{''},\mathcal{L})$ with
\[
\Omega_{X}^1(\log (D_1, Z{''})) \cong \Omega_{X}^1(\log (D_2, Z{''})),
\]
we have
\[
\Omega_{X}^1(\log (D_1, Z)) \cong \Omega_{X}^1(\log (D_2, Z))
\]
\end{proposition}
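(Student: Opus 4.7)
The plan is to show that $\Omega_X^1(\log(D, Z))$ can be recovered intrinsically from the pair $(\Omega_X^1(\log(D, Z'')), Z)$, so that any isomorphism between the sheaves at the $Z''$-level descends to one at the $Z$-level.

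First, I will invoke Proposition \ref{prop234} for each $D_i$ to identify
\[
(\Omega_X^1(\log(D_i, Z'')))^{\vee\vee} \cong \Omega_X^1(\log D_i),
\]
together with the exact sequence
\[
0 \to \Omega_X^1(\log(D_i, Z'')) \to \Omega_X^1(\log D_i) \to \Oo_{Z''} \to 0,
\]
and the analogous sequence with $Z$ in place of $Z''$. The local description of the residue in the remark following Proposition \ref{prop234} (a section $\omega + u\, dz_1/z_1$ at $p$ maps to $u(p) \in \CC$) shows that the quotient map $\Omega_X^1(\log D_i) \to \Oo_Z$ factors as $\Omega_X^1(\log D_i) \twoheadrightarrow \Oo_{Z''} \twoheadrightarrow \Oo_Z$, the second arrow being the obvious projection that kills the stalks at points of $Z'' \setminus Z$. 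Consequently, $\Omega_X^1(\log(D_i, Z))$ is precisely the preimage of $\Oo_{Z'' \setminus Z} \subset \Oo_{Z''}$ under the quotient $\Omega_X^1(\log D_i) \twoheadrightarrow \Oo_{Z''}$.

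Next, given an isomorphism $\varphi \colon \Omega_X^1(\log(D_1, Z'')) \xrightarrow{\cong} \Omega_X^1(\log(D_2, Z''))$, I will take its double dual to produce $\varphi^{\vee\vee} \colon \Omega_X^1(\log D_1) \xrightarrow{\cong} \Omega_X^1(\log D_2)$. By naturality of the canonical map into the double dual, $\varphi^{\vee\vee}$ fits into a commutative diagram with the two Poincaré residue sequences and therefore induces an automorphism $\psi$ of the cokernel $\Oo_{Z''}$. The decisive observation is that $\Oo_{Z''} \cong \bigoplus_{p \in Z''} \Oo_p$ is a skyscraper supported on distinct reduced points, so $\End(\Oo_{Z''}) \cong \prod_{p \in Z''} \CC$; hence $\psi$ acts by a nonzero scalar at each point and preserves every direct summand indexed by a subset of $Z''$, in particular the subsheaf $\Oo_{Z'' \setminus Z}$. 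It follows that $\varphi^{\vee\vee}$ restricts to an isomorphism between the preimages of $\Oo_{Z'' \setminus Z}$, which by the first paragraph are $\Omega_X^1(\log(D_1, Z))$ and $\Omega_X^1(\log(D_2, Z))$, as required. I do not anticipate a serious obstacle: the only mildly technical point is the factorization of the quotient maps, a stalk-level check; the conceptual crux is simply that automorphisms of a reduced skyscraper respect its canonical decomposition into point-summands, so the partition $Z'' = Z \sqcup (Z'' \setminus Z)$ is automatically preserved.
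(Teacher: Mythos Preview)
Your proof is correct and takes a genuinely different route from the paper's. The paper first establishes (by pushing forward from the blow-up $\widetilde{X}=\mathrm{Bl}_Z X$) the exact sequence
\[
0\to \Omega_X^1(\log(D,Z''))\to \Omega_X^1(\log(D,Z))\to \Oo_{Z'}\to 0,
\]
and then, given $\phi$, completes the diagram by showing that $\Ext^1_X(\Oo_{Z'},\Omega_X^1(\log(D_2,Z)))=0$ via the local-to-global spectral sequence, using that $\Omega_X^1(\log(D_2,Z))$ is locally free near $Z'$ since $Z\cap Z'=\emptyset$. You instead pass to the double dual $\Omega_X^1(\log D_i)$ once and for all, identify $\Omega_X^1(\log(D_i,Z))$ as the preimage of the summand $\Oo_{Z''\setminus Z}\subset \Oo_{Z''}$, and observe that the induced automorphism of the skyscraper $\Oo_{Z''}$ is diagonal, hence preserves that summand. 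Your approach is a bit more conceptual---it exhibits $\Omega_X^1(\log(D,Z))$ as intrinsically determined by the pair $(\Omega_X^1(\log(D,Z'')),Z)$---and avoids the $\Ext^1$ computation; the paper's approach is more hands-on but requires knowing that $Z\cap Z'=\emptyset$ to get the vanishing. The factorization $\Omega_X^1(\log D_i)\twoheadrightarrow \Oo_{Z''}\twoheadrightarrow \Oo_Z$ that you use is indeed immediate from diagram~(\ref{ddia1}), since both quotient maps are restrictions of the Poincar\'e residue $\Omega_X^1(\log D_i)\to \bigoplus_j \Oo_{D_j}$ to the respective sets of points.
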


\begin{proof}
In $\widetilde{X}$ we can consider the aforementioned short exact sequence
\[
0\to \Omega_{\widetilde{X}}^1(\log (\widetilde{D},Z')\to \Omega_{\widetilde{X}}^1(\log \widetilde{D})\to\Oo_{Z'}\to 0
\]
for $D=D_i$. Taking push-forward with respect to $\pi$, we have
\[
0\to \Omega_{X}^1(\log (D,Z''))\to \Omega_{X}^1(\log (D,Z))\to \Oo_{Z'}\to 0
\]
from the fact that $\mathbf{R}^1\pi_*\Omega_{\widetilde{X}}^1(\log (\widetilde{D},Z'))\cong 0$ since $\Omega_{\widetilde{X}}^1(\log \widetilde{D},Z')$ is locally free along $Z$. If we let $\phi:\Omega_{X}^1(\log (D_1, Z'')) \stackrel{\cong}{\longrightarrow}\Omega_{X}^1(\log (D_2, Z''))$ 
be an isomorphism, then we can complete the following diagram 
\[
\begin{tikzcd}[
 column sep=normal,row sep=normal,
  ar symbol/.style = {draw=none,"\textstyle#1" description,sloped},
  isomorphic/.style = {ar symbol={\cong}},
  ]
0\ar[r] & \Omega_{X}^1(\log (D_1, Z'')) \ar[r]\ar[d,"\phi"] & \Omega_{X}^1(\log (D_1, Z)) \ar[r]\ar[d,dotted,"\psi"]  & \Oo_{Z'} \ar[r]\ar[d,isomorphic] & 0\\
0\ar[r] & \Omega_{X}^1(\log (D_2, Z'')) \ar[r] &  \Omega_{X}^1(\log (D_2, Z)) \ar[r] & \Oo_{Z'} \ar[r] & 0
\end{tikzcd}
\]
as soon as we prove that $\Ext_X^1(\Oo_{Z'},\Omega_{X}^1(\log (D_2, Z)))$ is trivial. This follows easily from the local-to-global spectral sequence taking into account that the intersection of the support of $\Oo_{Z'}$ and the singular points of $\Omega_{X}^1(\log (D_2, Z))$ is empty. Therefore, we can construct an isomorphism $\psi:\Omega_{X}^1(\log (D_1, Z)) \stackrel{\cong}{\longrightarrow}\Omega_{X}^1(\log (D_2, Z))$, as required. 
\end{proof}

\begin{remark}
Notice that the previous proposition was already known in the particular case $Z=\emptyset$ by Proposition \ref{prop234}.
\end{remark}

\begin{corollary}
Let $Z\subset Z{''}\subset X$ general set of points. If the Torelli property holds for $\FF(Z,\mathcal{L})$, then it also holds for $\FF(Z{''},\mathcal{L})$.
\end{corollary}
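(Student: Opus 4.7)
The plan is a direct, two-line reduction to Proposition \ref{prop-Torelliplusone}. First I would take an arbitrary pair $D_1, D_2 \in \FF(Z'', \Ll)$ satisfying $\Omega_X^1(\log(D_1, Z'')) \cong \Omega_X^1(\log(D_2, Z''))$, the goal being to conclude $D_1 = D_2$.

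The first step is the elementary observation that, since $Z \subset Z''$, any smooth divisor in $|\Ll|$ passing through $Z''$ automatically passes through $Z$; hence $\FF(Z'', \Ll) \subseteq \FF(Z, \Ll)$, and in particular both $D_1$ and $D_2$ belong to $\FF(Z, \Ll)$. This makes it meaningful to compare their generalized logarithmic sheaves with respect to the smaller set $Z$.

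Next I would invoke Proposition \ref{prop-Torelliplusone} directly: it promotes the given isomorphism $\Omega_X^1(\log(D_1, Z'')) \cong \Omega_X^1(\log(D_2, Z''))$ to an isomorphism $\Omega_X^1(\log(D_1, Z)) \cong \Omega_X^1(\log(D_2, Z))$. With this in hand, the hypothesis that the Torelli property holds on $\FF(Z, \Ll)$ yields $D_1 = D_2$, which is exactly the Torelli property for $\FF(Z'', \Ll)$.

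In short, there is no genuine obstacle to overcome in this corollary — all the real work was carried out in Proposition \ref{prop-Torelliplusone}, whose construction of the commutative diagram with the dotted isomorphism $\psi$ between the sheaves relative to the smaller set $Z$ is the key input. The only thing I would be careful about in writing is to state the inclusion $\FF(Z'', \Ll) \subseteq \FF(Z, \Ll)$ explicitly so that the proposition can be applied without ambiguity.
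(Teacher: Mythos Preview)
Your proposal is correct and matches the paper's approach exactly: the paper states this corollary without proof, treating it as an immediate consequence of Proposition~\ref{prop-Torelliplusone}, which is precisely the reduction you describe.
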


    The previous observations drive us to consider the following problem: given a projective surface $X$ and an effective line bundle $\mathcal{L}$ on $X$, which is the minimal cardinality of a general set of points $Z\subset X$ such that the Torelli property holds for $\FF(Z,\mathcal{L})$? In Section \ref{sec-projplane} we are going to address this question in the case of arbitrary line bundles on the projective plane $\PP^2$.

\subsection{Divisors $D\subset X$ such that $D\cap Z'=\emptyset$}\label{subsec-pointsout}

Let $D\subset X$ be a divisor such that $Z\subset D_{\mathrm{sm}}$ and $Z'\cap D=\emptyset$. Let $\widetilde{D}$ (resp. $\widetilde{D}'$) be the strict transform of $D$ in $\widetilde{X}$ (resp. in $\widetilde{X}'$), according to notation from (\ref{abcd}).

\begin{remark}\label{composition-pushforward}
It follows from Proposition \ref{prop1} that $\pi{'}_*\Omega_{\widetilde{X}'}^1(\log\widetilde{D}')\cong\Omega_{\widetilde{X}}^1(\log \widetilde{D})$.
In other words, we have the following isomorphism 
\[
\Omega_X^1(\log (D, Z)) \cong (\pi \circ \pi')_* \Omega^1_{\widetilde{X}'} (\log \widetilde{D}'),
\]
where $\widetilde{D}'$ is the strict transform of $\widetilde{D}$ in $\widetilde{X}'$. 
\end{remark}

This allows us to study  logarithmic bundles in more general blown-up surfaces, a problem that will be tackled in Section \ref{sec-blowup}. In particular, we would like to check if the Torelli property holds for the linear system $|\widetilde{D}'|$ on $\widetilde{X}'$.  As an immediate consequence of Remark \ref{composition-pushforward}, we have the following result.

\begin{lemma}\label{torr-proj}
If the Torelli property holds for the pair $(D,Z)$ on $X$, then the Torelli property holds for the linear system $|\widetilde{D}'|$ on $\widetilde{X}'$.
\end{lemma}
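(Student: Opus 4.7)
The plan is to transfer the hypothesized Torelli property on $X$ to $\widetilde{X}'$ by pushing forward through the composition $\pi''=\pi\circ\pi'$, using Remark \ref{composition-pushforward} as the key bridge.

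First, I would make explicit the correspondence between smooth members of the linear system $|\widetilde{D}'|$ on $\widetilde{X}'$ and smooth members of $\FF(Z,\mathcal{L})$ (with $\mathcal{L}=\Oo_X(D)$) that are disjoint from $Z'$. The numerical class $[\widetilde{D}']$ equals $(\pi'')^*[\mathcal{L}]-\sum_{p\in Z}E_p$, with coefficient zero on the exceptional divisors over $Z'$, so any smooth $\widetilde{D}_i'\in|\widetilde{D}'|$ must be the $\pi''$-strict transform of a smooth $D_i\in|\mathcal{L}|$ passing smoothly through each point of $Z$ and disjoint from $Z'$; conversely, every such $D_i$ lifts to a smooth element of $|\widetilde{D}'|$. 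In particular $D_i\in\FF(Z,\mathcal{L})$ and we are precisely in the setting in which Remark \ref{composition-pushforward} applies.

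Next, suppose we are given two such divisors $\widetilde{D}_1',\widetilde{D}_2'\in|\widetilde{D}'|$ with
\[
\Omega_{\widetilde{X}'}^1(\log \widetilde{D}_1')\cong \Omega_{\widetilde{X}'}^1(\log \widetilde{D}_2').
\]
Applying the push-forward $\pi''_*=\pi_*\circ\pi'_*$ and invoking Remark \ref{composition-pushforward} twice yields
\[
\Omega_X^1(\log(D_1,Z))\cong \pi''_*\Omega_{\widetilde{X}'}^1(\log \widetilde{D}_1')\cong \pi''_*\Omega_{\widetilde{X}'}^1(\log \widetilde{D}_2')\cong \Omega_X^1(\log(D_2,Z)).
\]
The Torelli hypothesis for the pair $(D,Z)$ on $X$ then forces $D_1=D_2$, and taking strict transforms through $\pi''$ gives $\widetilde{D}_1'=\widetilde{D}_2'$, which is exactly the Torelli property for $|\widetilde{D}'|$ on $\widetilde{X}'$.

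The only genuine point of care is the first step, namely verifying that every smooth member of $|\widetilde{D}'|$ really arises as the strict transform of a smooth divisor in $\FF(Z,\mathcal{L})$ disjoint from $Z'$ (so that Remark \ref{composition-pushforward} is applicable to it). I expect this to be a direct consequence of the intersection pattern of $\widetilde{D}'$ with the exceptional curves $E_p$ for $p\in Z$ and $E_q$ for $q\in Z'$, combined with the standard bijection between line bundles on $X$ and their pullbacks to $\widetilde{X}'$ twisted by exceptional classes; no deeper input should be required beyond what is recorded in Section \ref{preliminaries}.
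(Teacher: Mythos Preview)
Your argument is correct and follows exactly the route the paper intends: the paper states the lemma as ``an immediate consequence of Remark \ref{composition-pushforward}'' without further detail, and what you have written is precisely the natural unpacking of that remark---push forward an isomorphism of logarithmic bundles on $\widetilde{X}'$ through $\pi''$ to obtain an isomorphism of generalized logarithmic sheaves on $X$, then apply the Torelli hypothesis there. Your care in checking that smooth members of $|\widetilde{D}'|$ correspond bijectively to smooth divisors in $\FF(Z,\mathcal{L})$ disjoint from $Z'$ is a legitimate detail the paper leaves implicit, and your sketch of why it holds (via the intersection numbers with the exceptional curves) is correct.
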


On the other hand, the reciprocal implication does not hold in general, as we will see combining the previous Lemma with Remark \ref{rem998}. Indeed, in Section \ref{sec-blowup} we are going to study deeply this issues in the particular case of the smooth cubic surface $S\subset\PP^3$.


\section{Generalized logarithmic sheaves with fixed  points inside the divisor}\label{sec-projplane}

In this section we will place ourselves on the setting of Subsection \ref{subsec-pointsin} for the particular case of the projective plane. In other words, we are going to consider the generalized logarithmic sheaf $\Omega_{\PP^2}^1(\log (D ,Z))$ on $\PP^2$ associated to a pair $(D, Z)$, where $D\subset \PP^2$ is a smooth curve of degree $d$ and $Z$ is a set of $k$ distinct points on $D$. In Section \ref{sec-blowup}, in order to follow the path proposed in Subsection \ref{subsec-pointsout}, we are going to study an analogous setting but with respect to a set of point without intersection with the curves under consideration.

Fix $k$ distinct points $Z=\{p_1, \dots, p_k\}$ on $\PP^2$ in general position and set $\FF_d(Z):=\FF(Z,\Oo_{\PP^2}(d))\subset \PP \mathrm{H}^0(\Ii_{Z, \PP^2}(d))$ consisting of the smooth curves of degree $d$ passing through $Z$. We are going to answer to Question \ref{question-torelli} in this setting.

Recall that a hypersurface $D\subset\PP^n$ is of Sebastiani-Thom type if its defining polynomial can be written, after choosing properly a coordinate system, as a sum of two polynomials on disjoint sets of variables. For instance, any hyperplane is of Sebastiani-Thom type. Analogously, any quadric hypersurface in the projective space is of Sebastiani-Thom type, since its defining equation can be written as a sum of squares.\\
If $d\ge 3$  we already know, by \cite[Theorem 1]{UY}, that the Torelli  property holds for any smooth hypersurface not of Sebastiani-Thom type. Focusing on the projective plane, this implies, in particular, that for any $d\geq 3$ and $Z\subset\PP^2$, the Torelli property holds for any pair $(D,Z)$, for  $D$ in the open subset $U\subset\FF_d(Z)$ consisting on smooth curves not of Sebastiani-Thom type.  Indeed, if for two $D_1,D_2$ smooth curves not of Sebastiani-Thom type, the associated generalized logarithmic sheaves are isomorphic,  
$$
\Omega_{\PP^2}^1(\log (D_1, Z_1)) \cong \Omega_{\PP^2}^1(\log (D_2, Z_2)),
$$ then, taking their double duals, we obtain from Proposition \ref{prop234} that
\[
\Omega_{\PP^2}^1(\log D_1) \cong \Omega_{\PP^2}^1(\log D_2).
\]
The mentioned result \cite[Theorem 1]{UY} implies that the curves $D_1$ and $D_2$ coincide. Furthermore, we also have that $Z_1 = Z_2$ because these sets are the respectively singular locus of isomorphic sheaves (see Proposition \ref{prop234}). \\ 

Let us denote the different cases to be studied by the degree of the curve and the number of fixed points, i.e., by the pair $(d,k)$. Notice that, for Question \ref{question-torelli} not to be trivial, we need to have at least two different curves in $\FF_d(Z)$. Therefore, the pair $(d,k)$ should be chosen to satisfy $\mathrm{h}^0(\Ii_{Z, \PP^2}(d))\ge 2$ so that the number $k$ of fixed points does not determine completely the curve. Thus, we will first consider Question \ref{question-torelli} for
\begin{equation}\label{list-Torellicases}
(d,k) \in \{(1,0),(1,1), (2,0),(2,1), (2,2), (2,3), (2,4)\}.
\end{equation}
The three first cases on the list are easily handled. Indeed, recall that we have
\[
\Omega_{\PP^2}^1(\log L) \cong \Oo_{\PP^2}(-1)^{\oplus 2},~\quad \Omega_{\PP^2}^1(\log Q) \cong \Tt_{\PP^2}(-2),
\]
for any line $L$ and any smooth conic $Q$ in $\PP^2$; see, for example, \cite{DK} and \cite{Angelini}, or apply Lemma \ref{keylem}. Moreover, if $p\in L$ is a point on a line $L\subset \PP^2$, then the sequence (\ref{dd1}) gives
\[
\Omega_{\PP^2}^1(\log (L, \{p\})) \cong \Ii_{\{p\}, \PP^2}(-1)\oplus \Oo_{\PP^2}(-1).
\]
Therefore, the Torelli property does not hold for $\FF_1(\{p\})$.\\
In Section \ref{sec-planeconics}, we will deal with the rest of the cases listed in (\ref{list-Torellicases}), that means we will consider a conic and fix at least one point on it.\\
Finally, in Section \ref{sec-planecubics}, we will consider cubic curves of Sebastiani-Thom type.

\subsection{Conics with fixed points}\label{sec-planeconics}
We now consider the case of generalized logarithmic sheaves associated to conics. In this case, an important tool to study them will be the description of their restriction to lines. Before Example \ref{exmm12} we recalled the definition of jumping line for a vector bundle on a projective space. This notion can be generalized for arbitrary coherent sheaves. In the particular case of rank two coherent sheaves, we give the definition as follows. 

\begin{definition}\label{def-jumping}
For a coherent sheaf $\Ee$ on $\PP^2$ of rank two with first Chern class $c_1\in \{-1,0\}$, a line $L \subset \PP^2$ is said to be a \textit{jumping line} of $\Ee$ if $\mathrm{h}^1(\Ee(-1-c_1)_{|L})>0$. Again we denote the set of jumping lines of $\Ee$ by $S(\Ee)$.
\end{definition}

\begin{remark}
Notice that, when $\Ee$ is a vector bundle of rank two with Chern class $c_1=-1$ (resp. $c_1=0$), $L$ is a jumping line if and only if $\Ee_{|L}\ncong\Oo_{\PP^1}(-1)\oplus\Oo_{\PP^1}$ (resp. 
$\Ee_{|L}\ncong\Oo_{\PP^1}\oplus\Oo_{\PP^1}$),  namely $\Ee_{|L}$ has not the generic expected splitting type.
\end{remark}

We will see that the case $(d,k)=(2,3)$ plays a particular role. Let us start to describe it in the following remark.

\begin{remark}\label{keyrem}
Fix a smooth conic $Q$ passing through three points $Z=\{p_1, p_2, p_3\}$ in general position. Then, directly from Definition \ref{def-genlog}, the  generalized logarithmic sheaf $\Omega_{\PP^2}^1(\log (Q, Z))$ fits into the following exact sequence
\begin{equation}\label{eeeeqq2}
0\to  \Omega_{\PP^2}^1 \to \Omega_{\PP^2}^1(\log (Q, Z)) \to \Oo_Q(-Z) \to 0, 
\end{equation}
where $\Oo_Q(-Z)$ is the ideal sheaf of $Z$ inside $Q$. Recall that the inclusion $Z\subset Q$ induces a reversed inclusion of the respective ideals, which implies the following short exact sequence
\[
0\to \Ii_{Q, \PP^2} \to \Ii_{Z, \PP^2}\to \Oo_Q(-Z)\to 0.
\]
Having that $\Ii_{Q, \PP^2} \simeq \Oo_{\PP^2}(-2)$ and $\Ii_{Z, \PP^2}$ admits a free resolution
\[
0\to \Oo_{\PP^2}(-3)^{\oplus 2} \to \Oo_{\PP^2}(-2)^{\oplus 3} \to \Ii_{Z, \PP^2} \to 0,
\]
we obtain, through the Mapping Cone (see \cite[Section 1.5.1]{weibel}), a  free resolution for $\Oo_Q(-Z)$: 
\[
0\to \Oo_{\PP^2}(-3)^{\oplus 2} \to \Oo_{\PP^2}(-2)^{\oplus 2} \to \Oo_{Q}(-Z) \to 0.
\]

Then, considering the resolution $\Omega_{\PP^2}^1$ and applying the Horseshoe Lemma (see \cite[Lemma 2.2.8]{weibel}) to the sequence (\ref{eeeeqq2}), we obtain the exact sequence
\begin{equation}\label{eqa7}
0\to \Oo_{\PP^2}(-3)^{\oplus 3} \stackrel{\mathrm{M}}{\longrightarrow} \Oo_{\PP^2}(-2)^{\oplus 5} \to \Omega_{\PP^2}^1(\log (Q,Z)) \to 0,
\end{equation}
that is, a Steiner type resolution; see for example \cite[Definition 3.1]{DK}. Notice in principle that the Horseshoe Lemma would apply only to the associated modules of twisted global sections. However, since the three sheaves in (\ref{eeeeqq2}) have no global sections and $\mathrm{H}^1(\Omega_{\PP^2}^1(t))=0$ for $t>0$, the Horseshoe Lemma can be also applied to (\ref{eeeeqq2}). In particular, $\Omega_{\PP^2}^1(\log (Q, Z))$ is a stable sheaf of rank two on $\PP^2$ with the Chern classes $(c_1, c_2)=(-1,4)$.
\end{remark}

\begin{proposition}\label{unique-singular-extension}
Fix a set $Z=\{p_1, \dots, p_k\}\subset \PP^2$ of $k$ distinct points on a smooth conic $Q\subset \PP^2$. Among the extensions 
\begin{equation}\label{ert4}
0\to \Omega_{\PP^2}^1 \to  \Ff \to \Oo_Q(-Z) \to 0,
\end{equation}
there exists a unique middle term $\Ff$ exactly singular along $Z$, namely $\Ff \cong \Omega_{\PP^2}^1(\log (Q,Z)).$
\end{proposition}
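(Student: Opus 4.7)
The plan is to start with any $\Ff$ as in (\ref{ert4}) singular exactly along $Z$ and to identify it, via its double dual, with $\Omega^1_{\PP^2}(\log(Q,Z))$.

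First I would pass to the double dual $\Ff^{\vee\vee}$. Since $\Ff$ is torsion-free on the smooth surface $\PP^2$, the sheaf $\Ff^{\vee\vee}$ is reflexive of rank two and hence locally free, and the canonical quotient $T:=\Ff^{\vee\vee}/\Ff$ is concentrated on the singular locus of $\Ff$, which by hypothesis is the reduced scheme $Z$. The assumption that the singular scheme is reduced implies that $T$ has length one at each $p\in Z$ (as one may verify by a direct local computation, writing $\Ff$ as the pushout of the local equation $f$ of $Q$ along a representative $\omega\in\Omega^1_p$ of the local extension class), so $T\cong \Oo_Z$.

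Next I would identify $\Ff^{\vee\vee}$. Combining (\ref{ert4}) with $0\to \Ff\to\Ff^{\vee\vee}\to \Oo_Z\to 0$ and applying the snake lemma yields
$$0\to \Omega^1_{\PP^2}\to \Ff^{\vee\vee}\to \Gg\to 0, \qquad 0\to \Oo_Q(-Z)\to \Gg\to \Oo_Z\to 0.$$
Because $\Omega^1_{\PP^2}$ and $\Ff^{\vee\vee}$ are locally free, $\Gg$ has projective dimension at most one on $\PP^2$, whereas each skyscraper $\Oo_p$ has projective dimension two. Consequently the local extension class in $\Ext^1_{\Oo_{\PP^2,p}}(\Oo_p,\Oo_{Q,p})\cong\CC$ must be non-zero at every $p\in Z$, forcing $\Gg\cong\Oo_Q$. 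Hence $\Ff^{\vee\vee}$ fits into $0\to\Omega^1_{\PP^2}\to\Ff^{\vee\vee}\to\Oo_Q\to 0$, and by the Torelli property for smooth conics (Example \ref{exmm12}), which characterizes $\Omega^1_{\PP^2}(\log Q)\cong\Tt_{\PP^2}(-2)$ as the locally free middle term of such a sequence, one obtains $\Ff^{\vee\vee}\cong\Omega^1_{\PP^2}(\log Q)$.

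Finally I would recover $\Ff$ itself. The inclusion $\Ff\hookrightarrow\Ff^{\vee\vee}$ gives a morphism of the two Poincaré residue sequences, with left column the identity on $\Omega^1_{\PP^2}$, and the snake lemma identifies the induced map $\iota\colon\Oo_Q(-Z)\to\Oo_Q$ as an injection with cokernel $\Oo_Z$. Any non-zero morphism of line bundles on $Q\cong\PP^1$ with cokernel $\Oo_Z$ is a scalar multiple of the canonical inclusion $\Ii_{Z/Q}\hookrightarrow\Oo_Q$, so $\Ff$ is the kernel of the composition $\Omega^1_{\PP^2}(\log Q)\twoheadrightarrow\Oo_Q\twoheadrightarrow\Oo_Z$, which by (\ref{dd1}) is $\Omega^1_{\PP^2}(\log(Q,Z))$. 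The delicate step is the projective-dimension obstruction that rules out $\Oo_p$-summands in $\Gg$ and thereby pins it down as $\Oo_Q$.
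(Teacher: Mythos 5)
Your overall architecture parallels the paper's: pass to the (double) dual, identify it with $\Omega_{\PP^2}^1(\log Q)\cong\Tt_{\PP^2}(-2)$, and then recover $\Ff$ as the kernel of the induced surjection onto $\Oo_Z$. Your intermediate step pinning down $\Gg=\Ff^{\vee\vee}/\Omega_{\PP^2}^1\cong\Oo_Q$ by the projective-dimension obstruction is correct and arguably cleaner than the corresponding step in the paper. The genuine gap is the step you treat as already known: the claim that $\Omega_{\PP^2}^1(\log Q)$ is \emph{the} locally free middle term of an extension $0\to \Omega_{\PP^2}^1\to E\to\Oo_Q\to 0$. Example \ref{exmm12} does not assert this; it computes the specific sheaf $\Tt_{\PP^2}(-\log Q)$ by showing it is \emph{uniform}, and that uniformity is extracted from Lemma \ref{keylem} applied to lines meeting the divisor $Q$, i.e.\ from the logarithmic structure, not from the bare extension datum. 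Identifying the dual is exactly where the paper concentrates its effort (it analyses the surjections $\Tt_{\PP^2}\to\Oo_Q(2)$ and argues their kernels are isomorphic), so you have outsourced the heart of the proof to a citation that does not contain it.

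Worse, the characterization you invoke is false as stated, so this step cannot be closed by a soft argument. One has $\mathcal{E}xt^1_{\PP^2}(\Oo_Q,\Omega_{\PP^2}^1)\cong(\Omega_{\PP^2}^1)_{|Q}\otimes\Oo_Q(2)\cong\Oo_{\PP^1}(1)^{\oplus 2}$, hence $\Ext^1_{\PP^2}(\Oo_Q,\Omega_{\PP^2}^1)\cong\CC^4$, and the locally free middle terms correspond to nowhere-vanishing sections of $\Oo_{\PP^1}(1)^{\oplus 2}$; these do not all give isomorphic sheaves. Concretely, pick $\alpha\in\mathrm{H}^0(\Omega_{\PP^2}^1(2))$ whose single zero lies off $Q$, let $f$ be the equation of $Q$, and set
\[
E:=\mathrm{coker}\left(\Oo_{\PP^2}(-2)\xrightarrow{(\alpha,\, f)}\Omega_{\PP^2}^1\oplus\Oo_{\PP^2}\right).
\]
Then $E$ is locally free, fits into $0\to\Omega_{\PP^2}^1\to E\to\Oo_Q\to 0$, and satisfies $\mathrm{h}^0(E)=1$, while $\mathrm{h}^0(\Omega_{\PP^2}^1(\log Q))=\mathrm{h}^0(\Tt_{\PP^2}(-2))=0$, so $E\not\cong\Omega_{\PP^2}^1(\log Q)$. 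Moreover $\ker\bigl(E\to\Oo_Q\to\Oo_Z\bigr)$ is again an extension of the form (\ref{ert4}), singular exactly along $Z$, with double dual $E$; so the hypotheses you actually use (locally free extension of $\Oo_Q$ by $\Omega_{\PP^2}^1$ arising as a double dual of a sheaf singular along $Z$) do not suffice to force $\Ff^{\vee\vee}\cong\Omega_{\PP^2}^1(\log Q)$, and some further input is required at this point. (Your initial reduction $\Ff^{\vee\vee}/\Ff\cong\Oo_Z$, rather than a thicker quotient supported on $Z$, rests on reading ``exactly singular along $Z$'' scheme-theoretically; this matches the paper's implicit assumption $\mathcal{E}xt^1_{\PP^2}(\Ff,\Oo_{\PP^2})\cong\Oo_Z$, so I do not count it against you.)
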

\begin{proof} 

Let us assume that the sheaf $\Ff$ defined by the sequence (\ref{ert4}) is singular along $Z$. Since $\Oo_Q(-Z)$ is supported on a proper subvariety of $\PP^2$, we have that $\Hom_{\PP^2}(\Oo_Q(-Z),\Oo_{\PP^2})=0$ and then by dualizing the exact sequence given in (\ref{ert4}) we get 
\[
0\to \Ff^{\vee} \to \Tt_{\PP^2} \to \Oo_Q(Z)\otimes \Oo_{\PP^2}(2) \to  \mathcal{E}xt_{\PP^2}^1(\Ff,\Oo_{\PP^2})\cong \Oo_Z \to 0.
\]
The latter sequence decomposes into the following two short exact sequences
\[
 \left\{
                                           \begin{array}{ll}
&0\to \Ff^{\vee} \to \Tt_{\PP^2} \to \Oo_Q\otimes \Oo_{\PP^2}(2) \to 0; \\[1em]
&0\to \Oo_Q\otimes \Oo_{\PP^2}(2) \to \Oo_Q(Z)\otimes \Oo_{\PP^2}(2) \to \Oo_Z \to 0.
\end{array}
\right.
\]
The first exact sequence is unique. Indeed, notice that every surjection of type
\[
\Tt_{\PP^2} \to  \Oo_Q(2)
\]
factorizes through the restriction of the tangent bundle on the conic $Q$, i.e., it can be described by the composition
\[
\Tt_{\PP^2} \stackrel{\pi}{\longrightarrow} \Tt_{\PP^2} \otimes \Oo_Q\cong \Oo_Q(\{q_1,q_2,q_3\})^{\oplus 2} \stackrel{f}{\longrightarrow} \Oo_Q(2),
\]
with $\{q_1,q_2,q_3\}$ three points on the conic. Observe that, by the isomorphism $Q \cong \PP^1$, the second map can be rewritten as
\[
\Oo_{\PP^1}(3)^{\oplus 2} \stackrel{f}{\longrightarrow} \Oo_{\PP^1}(4) \to 0.
\]
This implies that $f$ is unique, up to isomorphism; in particular, it can be represented by two linearly independent linear forms that give a basis of $\mathrm{H}^0\left(\Oo_{\PP^1}(1)\right)$. This implies that, again up to isomorphism, there is a unique vector bundle that is the kernel of $(f \circ \pi)$. Specifically, it is the vector bundle $\Tt_{\PP^2} (-2)$. 

The second exact sequence is uniquely determined by the polynomial of degree $k$ on $Q$ vanishing along $Z$, which represents the injective map of the sequence.\\

Putting everything together, this implies the uniqueness of the sheaf $\Ff$, because we can recover $\Ff$ as the kernel of the composition
\[
\Ff^{\vee\vee} \to \mathcal{E}xt_{\PP^2}^1(\Oo_Q\otimes \Oo_{\PP^2}(2), \Oo_{\PP^2}) \cong \Oo_Q \to \Oo_Z,
\]
which is obtained from the dual of the first sequence and the twist of the second one. 

Let us remark that if a sheaf $\Ff$ in the sequence (\ref{ert4}) is singular along a proper subscheme $Z^{'}\subset Z$ then the surjection 
\[
\Tt_{\PP^2} \to  \Tt_{\PP^2} \otimes \Oo_Q\cong \Oo_Q(\{q_1,q_2,q_3\})^{\oplus 2} \to \Oo_Q(Z\setminus Z^{'})\otimes \Oo_{\PP^2}(2)
\]
is no more unique. 
\end{proof}

The following result will tell us that the jumping lines of $\Omega_{\PP^2}^1(\log (Q,Z))$ are the expected ones, i.e., the ones passing through any two points of $Z$ and the ones passing through  a point of $Z$  and tangent to the curve $Q$.
\begin{proposition}\label{tangent}\label{tangentZ=3}
For $Z=\{p_1, \dots, p_k\}\subset \PP^2$ a set of $k \geq 3$ distinct points in a smooth conic $Q\subset \PP^2$, we have
\[
S\left(\Omega_{\PP^2}^1(\log (Q,Z)\right)= \left\{L_{ij}~|~ 1\le i<j\le k\right\} \cup \left\{ T_iQ ~|~1\le i \le k\right\},
\]
where $L_{ij}$ is the line passing through $p_i$ and $p_j$, and $T_iQ$ is the tangent line of $Q$ at $p_i$. 
\end{proposition}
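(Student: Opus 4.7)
The plan is to classify the splitting of $\Omega_{\PP^2}^1(\log(Q,Z))|_L$ on every line $L\subset\PP^2$ by exploiting the short exact sequence
\[
0\to \Omega_{\PP^2}^1(\log(Q,Z))\to \Omega_{\PP^2}^1(\log Q)\to \Oo_Z\to 0
\]
from Proposition \ref{prop234}. Since $\Omega_{\PP^2}^1(\log Q)\cong \Tt_{\PP^2}(-2)$ by Example \ref{exmm12}, one has $\Omega_{\PP^2}^1(\log Q)|_L\cong \Oo_L(-1)\oplus \Oo_L$ for every line $L$, so the generic splitting of $\Omega_{\PP^2}^1(\log(Q,Z))$ is $(-1,0)$ and the jumping condition reduces to $\mathrm{h}^1(\Omega_{\PP^2}^1(\log(Q,Z))|_L)>0$. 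Set $r:=\#(Z\cap L)$; since $Z\subset Q$ and $L\cdot Q=2$, Bézout forces $r\leq 2$. A local Koszul computation yields $\Tor_1^{\Oo_{\PP^2}}(\Oo_Z,\Oo_L)\cong \Oo_{Z\cap L}$, so tensoring the displayed sequence with $\Oo_L$ produces an exact sequence
\[
0\to \Oo_{Z\cap L}\to \Omega_{\PP^2}^1(\log(Q,Z))|_L\to \Gg\to 0,
\]
where $\Gg$ is the kernel of the surjection $\Omega_{\PP^2}^1(\log Q)|_L\twoheadrightarrow \Oo_{Z\cap L}$. Since the torsion summand is a skyscraper, $\mathrm{h}^1(\Omega_{\PP^2}^1(\log(Q,Z))|_L)=\mathrm{h}^1(\Gg)$.

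For $r=0$, both $\Tor_1$ and $\Oo_Z\otimes\Oo_L$ vanish, giving $\Omega_{\PP^2}^1(\log(Q,Z))|_L\cong \Oo_L(-1)\oplus\Oo_L$; so $L$ is not jumping. For $r=2$, one has $L=L_{ij}$ and $\Gg$ is a rank-two subsheaf of $\Oo_L(-1)\oplus\Oo_L$ of colength two, so $c_1(\Gg)=-3$; the only possible splittings $\Oo_L(-1)\oplus\Oo_L(-2)$ and $\Oo_L\oplus\Oo_L(-3)$ both satisfy $\mathrm{h}^1(\Gg)\geq 1$, hence every such secant is jumping.

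The decisive case is $r=1$, with $L\cap Z=\{p\}$ and $p=p_i$. Now $c_1(\Gg)=-2$, so either $\Gg\cong \Oo_L(-1)^{\oplus 2}$ (non-jumping) or $\Gg\cong \Oo_L\oplus\Oo_L(-2)$ (jumping). To decide, I would use the canonical sub
\[
0\to \Tt_L(-2)\cong \Oo_L\xrightarrow{\iota}\Omega_{\PP^2}^1(\log Q)|_L\to \Oo_L(-1)\to 0
\]
coming from $\Tt_L\hookrightarrow \Tt_{\PP^2}|_L$, and a short diagram chase shows that the surjection $\phi_L\colon \Omega_{\PP^2}^1(\log Q)|_L\twoheadrightarrow \CC_p$ (whose kernel is $\Gg$) yields $\Gg\cong \Oo_L\oplus\Oo_L(-2)$ exactly when $\phi_L\circ\iota=0$, and $\Gg\cong \Oo_L(-1)^{\oplus 2}$ otherwise. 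Dualizing via the sequence (\ref{sseeq3}), the map $\phi_L$ at the fiber over $p$ is identified with the linear form $df|_p\in T_p^{*}\PP^2$ arising from $\psi(\partial)=[\partial f]$, where $f$ locally defines $Q$; its kernel is $T_pQ$, so its restriction to the sub $T_pL$ vanishes precisely when $T_pL=T_pQ$, i.e., when $L=T_{p_i}Q$. Combining the three cases gives the claimed description of $S(\Omega_{\PP^2}^1(\log(Q,Z)))$. The main obstacle is this last identification: correctly matching the canonical sub $\Tt_L(-2)\hookrightarrow \Omega_{\PP^2}^1(\log Q)|_L$ with the dual data from $\psi$ so that the jumping/non-jumping dichotomy reduces cleanly to $L$ being tangent to $Q$ at $p_i$ or not.
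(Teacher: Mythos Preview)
Your overall strategy---restricting the double-dual sequence to an arbitrary line and splitting into the cases $r=0,1,2$---is sound and more uniform than the paper's route, which is quite different: for $k=3$ the paper realises $\Omega_{\PP^2}^1(\log(Q,Z))$ explicitly as a Steiner sheaf via a concrete $5\times 3$ matrix of linear forms (using the uniqueness of Proposition~\ref{unique-singular-extension}), checks by hand that the three secants and the three tangent lines give nontrivial kernels of the transposed restricted matrix, and treats the non-tangent $r=1$ lines via the Key Restriction Lemma applied to the map $\Omega_{\PP^2}^1|_L\to\Omega_{\PP^2}^1(\log Q)|_L$. The case $k>3$ is then obtained by induction through the sequences $0\to\Ff_{k+1}\to\Ff_k\to\Oo_{Z\setminus Z'}\to 0$. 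Your approach avoids both the explicit matrix and the induction, which is a genuine simplification \emph{provided} the $r=1$ dichotomy can be nailed down.

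That dichotomy is where your argument has a real gap. The identification of $\phi_L\circ\iota$ at the fibre over $p$ with $df_p|_{T_pL}$ is not justified by what you wrote: the isomorphism $\Omega_{\PP^2}^1(\log Q)\cong\Tt_{\PP^2}(-2)$ is obtained abstractly (stable bundles with equal Chern classes), and there is no reason given why, under it, the residue map $\Omega_{\PP^2}^1(\log Q)\to\Oo_Q$ should transport to the functional $\partial\mapsto(\partial f)(p)$ on $T_p\PP^2$. The map $\psi$ of sequence~(\ref{sseeq3}) lives on $\Tt_{\PP^2}$, not on $\Tt_{\PP^2}(-2)$, and arises as a connecting homomorphism rather than as the transpose of the residue; matching the two through the abstract isomorphism is exactly the nontrivial point you flag as ``the main obstacle'', and it is not carried out. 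The conclusion is nonetheless correct, and you can rescue it either by a direct local computation (take $Q=\{z_1=0\}$, $L=\{z_2=0\}$ in the transverse case and $Q=\{z_1+z_2^2=0\}$, $L=\{z_1=0\}$ in the tangent case, and locate the unique $\Oo_L$-sub explicitly), or---closer to the paper---by using Lemma~\ref{keylem} itself: dualising the left column of diagram~(\ref{keydiag}) for the pair $(Q,L)$ identifies the canonical filtration of $\Omega_{\PP^2}^1(\log Q)|_L$ intrinsically, and one then checks that the ambient residue factors through the quotient $\Omega_L^1(\log(Q\cap L)_{\mathrm{red}})$, whose own residue at $p$ is visibly nonzero on the $\Oo_L$-section in the transverse case and vanishes on it in the tangent case.
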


\begin{proof}
From the sequence (\ref{dd1}) we get that the lines $L_{ij}$'s are jumping. Indeed, restricting (\ref{dd1}) to $L_{ij}$, we get an exact sequence
\[
0\to \Oo_{p_i}\oplus \Oo_{p_j}\to \Omega_{\PP^2}^1(\log (Q,Z))_{|L_{ij}} \to \Oo_{L_{ij}}(-1)\oplus  \Oo_{L_{ij}} \to \Oo_{p_i}\oplus \Oo_{p_j}\to  0.
\]
Denoting by $\Gg$ the image of the middle map 
\[
\Omega_{\PP^2}^1(\log (Q,Z))_{|L_{ij}} \to \Oo_{L_{ij}}(-1)\oplus  \Oo_{L_{ij}},
\]
we verify directly that $\hh^1(\Omega_{\PP^2}^1(\log (Q,Z))_{|L_{ij}})=\hh^1(\Gg)$. This  proves that $\hh^1(\Omega_{\PP^2}^1(\log (Q,Z))_{|L_{ij}})\ge 1.$

Assume that $k=3$ and then, as seen before, we have 
\[
0\to \Omega_{\PP^2}^1 \to \Omega_{\PP^2}^1(\log (Q,Z))\to \Oo_Q(-Z) \to 0
\]
and a resolution determined by a matrix of linear forms, that is a Steiner sheaf:
\[
0\to \Oo_{\PP^2}(-3)^{\oplus 3} \stackrel{\mathrm{M}}\longrightarrow  \Oo_{\PP^2}(-2)^{\oplus 5} \to \Omega_{\PP^2}^1(\log (Q,Z)) \to 0.
\]
According to Proposition \ref{unique-singular-extension}, such a sheaf, singular along $Z$, is unique.

On the other hand, up to a projective automorphism, we may assume $Q=V(x_0x_1+x_1x_2+x_2x_0)$ and consider the matrix 
\[
\mathrm{M}=\begin{pmatrix} x_0&x_1&x_2&0&0\\ ax_0&bx_1&ax_2&x_0&-x_2\\ bx_0&ax_1&ax_2&x_1&x_1+x_2   \end{pmatrix}^t. 
\]
which defines a  Steiner sheaf $\Ff$, that fits as well in the short exact sequences (\ref{eeeeqq2}) and (\ref{eqa7}) replacing $\Omega_{\PP^2}^1(\log (Q, Z))$. 
Then $\Ff$ is a non-locally free sheaf with singularity at three points $p_0=[1:0:0]$, $p_1=[0:1:0]$, $p_2=[0:0:1]$, and $\Ff^{\vee\vee} \cong \Tt_{\PP^2}(-2) \cong \Omega_{\PP^2}^1(\log Q)$.
By unicity this gives $\Ff\cong \Omega_{\PP^2}^1(\log (Q, Z))$.
 Notice that the three tangent lines $V(x_i)$ for $i=0,1,2$ are jumping lines for $\Ff$. Furthermore, we can observe that, for 
\[
u_0=(0,1,1,0,a-b)^t,\quad u_1=(1,0,1,a-b,-a+b)^t,\quad u_2=(1,1,0,b-1,0)^t, 
\]
each $\mathrm{M}u_i$ is divisible by $x_j+x_k$, whenever we have $\{i,j,k\}=\{0,1,2\}$. This implies that the three additional lines $V(x_0+x_1)$, $V(x_1+x_2)$ and $V(x_2+x_0)$ are also jumping lines for $\Ff$; these three additional lines are the tangent lines to $D$ at each $p_i$. Let us explain why in a few words. By restriction of the free resolution of $\Ff$ to one of these six lines, say $L$, we obtain:
\[
0\to \Oo_{L}(-3)^{\oplus 3} \stackrel{\mathrm{M}_L}\longrightarrow  \Oo_{L}(-2)^{\oplus 5} \to \Ff\otimes \Oo_L \to 0.
\]
In the long exact sequence of cohomology we have:
\[
\mathrm{H}^1(\Oo_{L}(-3)^{\oplus 3}) \stackrel{{\mathrm{M}}_L}\longrightarrow  \mathrm{H}^1(\Oo_{\PP^2}(-2)^{\oplus 5}) \to \mathrm{H}^1(\Ff\otimes \Oo_L) \to 0.
\]
The Serre duality provides the isomorphisms 
\[
\mathrm{H}^1(\Oo_{L}(-3)^{\oplus 3})\cong \mathrm{H}^0(\Oo_{L}(1)^{\oplus 3})^{\vee}\mbox{ and } \mathrm{H}^1(\Oo_{L}(-2)^{\oplus 5})\cong \mathrm{H}^0(\Oo_{L}^{\oplus 5})^{\vee},
\]
which identify $\mathrm{H}^1(\Ff\otimes \Oo_L)$ with the kernel of the transpose matrix $(\mathrm{M}_L)^t$. Since each one of the six lines gives a nonzero vector in this kernel, this proves that $\hh^1(\Ff\otimes \Oo_L)\ge 1.$ Let $L$ be a line passing through only one of the points $p_i$ with $i=1,2,3$ and not tangent to $Q$. Consider the following restriction diagram
\[
\xymatrix{
\left(\Omega^1_{\PP^2}\right)_{|L}\cong \Oo_L(-2) \oplus \Oo_L(-1) \ar[r] \ar[dr]_\varphi & \left(\Omega_{\PP^2}^1(\log (Q, Z))\right)_{|L} \ar[d]\\
& \left(\Omega_{\PP^2}^1(\log (Q))\right)_{|L}\cong \Oo_L(-1) \oplus \Oo_L
}
\]
Due to the Key Restriction Lemma \ref{keylem}, we have that $\varphi$ defines an isomorphism between the two $\Oo_L(-1)$ summands. Hence $\left(\Omega_{\PP^2}^1(\log (Q, Z))\right)_{|L} \cong \Oo_L(-1)^{\oplus 2} \oplus \Oo_{p_i}$, i.e., $L$ cannot be a jumping line.

Therefore, we get the assertion for $k=3$. 

Assume now that $Z$ is a set of $k+1$ distinct points, and denote $\Ff_{k+1}$ and $\Ff_k$, the sheaves singular along $Z$ and along a subscheme $Z^{'}\subset Z$ of $k$ points, respectively. Then we have an exact sequence 
\[
0\to \Ff_{k+1} \to  \Ff_k \to \Oo_{Z\setminus Z^{'}} \to 0.
\]
Outside $Z\setminus Z^{'}$, their jumping lines coincide. So, choosing the $\binom{k+1}{k}=k+1$ subschemes $Z^{'}$ of length $k$ in $Z$, we verify that the jumping lines are 
the $\binom{k+1}{2}$ lines joining two points from $Z$ and the tangent lines to $Q$ along $Z$ (the $k$ vertices of the $k$-gon) by inductive hypothesis. This concludes the assertion. \end{proof}

We are ready to state the main result of this section.
\begin{theorem}\label{torr-k}
The Torelli property holds for $\FF_2(Z)$ if and only if $|Z|\ge 3$;
\end{theorem}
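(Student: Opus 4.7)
The plan is to prove both directions of the equivalence separately, leaning on Proposition \ref{prop234} (which identifies the singular locus and the double dual of the generalized logarithmic sheaf) and Proposition \ref{tangent} (which computes its jumping lines).

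For the necessity direction ($|Z|\le 2$), the case $|Z|=0$ is immediate since $\Omega_{\PP^2}^1(\log Q)\cong \Tt_{\PP^2}(-2)$ for every smooth conic $Q$. For $|Z|=1,2$, Proposition \ref{prop234} yields
\[
0\to \Omega_{\PP^2}^1(\log(Q,Z))\to \Tt_{\PP^2}(-2)\to \Oo_Z\to 0,
\]
and any abstract isomorphism between two such kernels lifts, via the double dual, to a scalar on $\Tt_{\PP^2}(-2)$ (since $\End(\Tt_{\PP^2}(-2))=\CC$); hence two kernels are isomorphic precisely when the surjections $\Tt_{\PP^2}(-2)\twoheadrightarrow\Oo_Z$ agree up to the action of $\Aut(\Tt_{\PP^2}(-2))\times\Aut(\Oo_Z)=\CC^{*}\times(\CC^{*})^{|Z|}$. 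The quotient of the surjective locus in $\Hom(\Tt_{\PP^2}(-2),\Oo_Z)\cong\CC^{2|Z|}$ by this action is $(\PP^1)^{|Z|}$, whose factors correspond to the tangent directions of $Q$ at the points of $Z$. For $|Z|\le 2$ the linear subsystem of conics in $\FF_2(Z)$ with prescribed tangent directions at $Z$ has expected dimension $(5-|Z|)-|Z|=5-2|Z|\ge 1$, and is nonempty as it contains any fixed $Q_0\in \FF_2(Z)$; this produces infinitely many distinct conics with isomorphic generalized logarithmic sheaves, so Torelli fails.

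For the sufficiency direction ($|Z|\ge 3$), suppose $\Omega_{\PP^2}^1(\log(Q_1,Z))\cong \Omega_{\PP^2}^1(\log(Q_2,Z))=:\Ff$. By Proposition \ref{prop234} the singular locus of $\Ff$ is $Z$, so $Z$ is determined by $\Ff$. The set of jumping lines $S(\Ff)$ is intrinsic to $\Ff$, and Proposition \ref{tangent} applied with either $Q_1$ or $Q_2$ gives
\[
S(\Ff)=\{L_{ab}\mid 1\le a<b\le k\}\cup\{T_{p_i}Q_j\mid 1\le i\le k\}\quad\text{for } j=1,2,
\]
where $L_{ab}$ is the secant through $p_a,p_b$. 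The secant lines depend only on $Z$, so subtracting them from $S(\Ff)$ gives $\{T_{p_i}Q_1\}=\{T_{p_i}Q_2\}$ as sets of lines. Since $Z$ is in general position, each tangent line contains exactly one point of $Z$, which lets us match tangent lines to points and conclude $T_{p_i}Q_1=T_{p_i}Q_2$ for every $i$.

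It remains to reconstruct a smooth conic from three tangent flags $(p_i,T_{p_i}Q)$ with $i=1,2,3$. Choose coordinates with $p_i=e_i$ (possible since $Z$ is non-collinear); the conics through $p_1,p_2,p_3$ take the form $ax_0x_1+bx_1x_2+cx_2x_0=0$, parametrized by $(a:b:c)\in\PP^2$, and the tangent direction at each $e_i$ is read off from the gradient of this equation, fixing one of the ratios $a:c$, $b:a$, $c:b$. Any two of these three conditions determine $(a:b:c)$ uniquely, while the third is automatically compatible because the flags arise from an actual conic, so $Q_1=Q_2$. The main obstacle I expect is the careful justification of the moduli description in the necessity direction, in particular verifying that any abstract isomorphism between the generalized logarithmic sheaves really descends to a compatible morphism of short exact sequences and hence induces the advertised $\CC^{*}\times(\CC^{*})^{|Z|}$-action on surjections to $\Oo_Z$.
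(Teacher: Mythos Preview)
Your proof is correct and follows essentially the same strategy as the paper: for $|Z|\le 2$ both you and the paper use the double-dual sequence from Proposition~\ref{prop234} to realize the generalized logarithmic sheaf as a kernel of a surjection $\Tt_{\PP^2}(-2)\twoheadrightarrow\Oo_Z$ and pass to the quotient $(\PP^1)^{|Z|}$, while for $|Z|\ge 3$ both arguments recover $Z$ as the singular locus and the tangent lines from the jumping locus of Proposition~\ref{tangent}, then reconstruct the conic from these tangent flags.

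The one noteworthy difference is in the necessity direction: the paper simply observes that $\FF_2(Z)$ is an open subset of $\PP^{5-|Z|}$ mapping to the $|Z|$-dimensional target $(\PP^1)^{|Z|}$, so the map cannot be injective for $|Z|\le 2$ by pure dimension count. You instead assert that the factors of $(\PP^1)^{|Z|}$ record the tangent directions of $Q$ at the points of $Z$, and then exhibit positive-dimensional families of conics with prescribed tangents. That geometric identification is correct but is not proved in your write-up (and you rightly flag it as the main obstacle); the paper's bare dimension argument sidesteps this entirely and is the quicker route. Your sufficiency argument, with the explicit coordinate reconstruction of the conic from three tangent flags, is more detailed than the paper's one-line appeal to the fact that three point-tangent pairs determine a conic, but the content is the same.
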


\begin{proof}
Note that the case $|Z|\ge 5$ is obvious, since any five points determine at most one smooth conic. Recall that for a set $Z=\{p_1, \dots, p_k\}\subset \PP^2$ of $k$ distinct points in general position and a smooth conic $Q\subset \PP^2$ containing $Z$, 
we have the double dual exact sequence
\begin{equation}\label{eqa1000}
0\to  \Omega_{\PP^2}^1(\log (Q,Z))  \to  \Omega_{\PP^2}^1(\log Q) \cong \Tt_{\PP^2}(-2) \to \Oo_{Z} \to 0,
\end{equation}
fitting into the following commutative diagram

\begin{equation}\label{dddia}
\begin{tikzcd}[
  row sep=small,
  ar symbol/.style = {draw=none,"\textstyle#1" description,sloped},
  isomorphic/.style = {ar symbol={\cong}},
  ]
&&0\ar[d] &0\ar[d]  \\
0\ar[r] & \Omega_{\PP^2}^1\ar[r] \ar[d,isomorphic] & \Omega_{\PP^2}^1(\log (Q,Z)) \ar[d]\ar[r] & \Oo_Q(-Z) \ar[r]\ar[d] &0\\
0\ar[r] & \Omega_{\PP^2}^1 \ar[r]  & \Omega_{\PP^2}^1(\log Q)) \cong \Tt_{\PP^2}(-2) \ar[r] \ar[d] &\Oo_Q \ar[r] \ar[d]& 0\\
& &\Oo_{Z}\ar[r,isomorphic] \ar[d] &\Oo_{Z}\ar[d] & \\
&&0&0.
\end{tikzcd}
\end{equation}
From the middle vertical sequence of (\ref{dddia}) we get that the sheaf $\Omega_{\PP^2}^1(\log (Q, Z))$ is a kernel of a surjection $\Tt_{\PP^2}(-2) \rightarrow \Oo_Z$, which induces a surjection $\left ( \Tt_{\PP^2}(-2) \right)_Z \rightarrow \Oo_Z$ on stalks. Thus we get a morphism 
\[
\FF_2(Z) \longrightarrow  \underbrace{\PP^1 \times \dots \times \PP^1}_{k \text{ copies}}\cong \prod_{i=1}^k \PP \mathrm{Hom}(\Tt_{\PP^2}(-2)_{p_i}, \Oo_{p_i}).
\]
Since $\FF_2(Z)$ is an open Zariski subset of $\PP^{5-k}\cong \PP \mathrm{H}^0(\Ii_{Z, \PP^2}(2))$, by dimension counting the map is not injective for $k=1,2$, i.e., the Torelli property does not hold. 

The assertion holds for $k=3,4$ by Proposition \ref{tangentZ=3}, because $k$ points on a smooth conic, determined by the singular locus of the logarithmic sheaf, together with the $k$ tangent lines at the $k$ points, determined by its jumping lines, fix the conic. \end{proof}

As a direct consequence, we have the following result.
\begin{corollary}
Let $Q$ be a smooth conic in $\FF_2(Z)$ for $Z=\{p_1, p_2, p_3\}$. Then the sheaf       $\Omega_{\PP^2}^1(\log (Q,Z))$ is stable. In particular, we have a generically one-to-one rational map 
\[
\Phi_Z: \PP^2=\PP \mathrm{H}^0(\Ii_{Z,\PP^2}(2)) \dashrightarrow \mathbf{M}_{\PP^2}(-1, 4),
\]
where $\mathbf{M}_{\PP^2}(-1,4)$ is the moduli space of stable sheaves of rank two on $\PP^2$ with Chern classes $(c_1, c_2)=(-1,4)$. 
\end{corollary}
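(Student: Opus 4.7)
The plan is to reduce both assertions to facts already established in this section. For the stability claim, I will start from the Poincar\'e residue sequence \eqref{eeeeqq2} together with the Chern class data $(c_1,c_2)=(-1,4)$ recorded in Remark \ref{keyrem}, so that $\Omega_{\PP^2}^1(\log(Q,Z))$ has rank two and slope $-\tfrac{1}{2}$ with respect to $\Oo_{\PP^2}(1)$. Since saturated rank-one subsheaves of a torsion-free sheaf on a smooth surface are line bundles, $\mu$-stability amounts to ruling out embeddings $\Oo_{\PP^2}(a)\hookrightarrow \Omega_{\PP^2}^1(\log(Q,Z))$ with $a\geq 0$. I would compose such an embedding with the residue map: under $Q\simeq \PP^1$ through the degree-two embedding, $\Oo_Q(-Z)$ is identified with $\Oo_{\PP^1}(-3)$, hence $\Hom_{\PP^2}(\Oo_{\PP^2}(a),\Oo_Q(-Z))\cong \mathrm{H}^0(\Oo_{\PP^1}(-3-2a))=0$ for $a\geq 0$. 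The hypothetical sub-line bundle would therefore factor through $\Omega_{\PP^2}^1$, contradicting the $\mu$-stability of the cotangent bundle, whose slope $-\tfrac{3}{2}$ forbids any sub-line bundle with $a\geq -1$ (as one verifies at once from the Euler sequence).

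Once stability is secured, the existence of the rational map $\Phi_Z$ to $\mathbf{M}_{\PP^2}(-1,4)$ follows from the moduli-theoretic universal property applied to the flat family over the open subset $\FF_2(Z)\subset \PP\mathrm{H}^0(\Ii_{Z,\PP^2}(2))\cong \PP^2$ whose fibre over $Q$ is $\Omega_{\PP^2}^1(\log(Q,Z))$; by the first part, every fibre is stable of the same Chern classes $(-1,4)$. Generic injectivity is then an immediate consequence of the Torelli statement Theorem \ref{torr-k} for $k=3$: distinct smooth conics in $\FF_2(Z)$ yield non-isomorphic generalized logarithmic sheaves, so $\Phi_Z$ is injective on the dense open set $\FF_2(Z)$, hence generically one-to-one.

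I do not foresee a genuine obstacle here, as both main ingredients (the slope bookkeeping on the residue sequence and the Torelli property for $|Z|=3$) are already in place. The only step requiring mild care is the twist computation identifying $\Oo_Q(-Z)$ with $\Oo_{\PP^1}(-3)$, where one must remember that the conic is embedded in $\PP^2$ with degree two, so that $\Oo_{\PP^2}(1)\restriction_{Q}$ has degree two on $\PP^1$; this is what makes every $a\geq 0$ fail to produce a nontrivial morphism into the residue piece.
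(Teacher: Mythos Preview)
Your proposal is correct and essentially matches the paper's approach. The paper states the corollary as a direct consequence: stability was already noted in Remark~\ref{keyrem} (from the Steiner resolution $0\to \Oo_{\PP^2}(-3)^{\oplus 3}\to \Oo_{\PP^2}(-2)^{\oplus 5}\to \Omega_{\PP^2}^1(\log(Q,Z))\to 0$, which immediately gives $\mathrm{H}^0=0$ and hence stability for a rank-two sheaf with $c_1=-1$), and the generic injectivity of $\Phi_Z$ is exactly Theorem~\ref{torr-k} for $|Z|=3$. Your argument for stability via the Poincar\'e residue sequence is a minor variant---it proves the same vanishing $\mathrm{H}^0(\Omega_{\PP^2}^1(\log(Q,Z)))=0$ by factoring through $\Omega_{\PP^2}^1$ rather than reading it off the resolution---but the logic is identical.
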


\begin{remark}
The moduli space $\mathbf{M}_{\PP^2}(c_1, c_2)$ of semistable sheaves of rank two on $\PP^2$ with the Chern classes $(c_1, c_2)$ is an irreducible variety of dimension $4c_2-c_1^2-3$, and it is expressed as the following disjoint union
\[
\mathbf{M}_{\PP^2}(c_1, c_2)=\coprod_{\delta\ge 0} \mathbf{M}_{\PP^2}(c_1, c_2)^{\delta},
\]
where $\mathbf{M}_{\PP^2}(c_1, c_2)^{\delta}$ consists of the semistable sheaves $\Ff$ in $\mathbf{M}_{\PP^2}(c_1, c_2)$ with $c_2(\Ff^{\vee\vee})=c_2-\delta$. Notice that $\mathbf{M}_{\PP^2}(c_1, c_2)^0$ consists of the locally free sheaves, and in \cite{Li} the natural map 
\[
\Psi: \mathbf{M}_{\PP^2}(c_1, c_2) \to \coprod_{\delta \ge 0} \mathbf{M}_{\PP^2}(c_1, c_2-\delta)^0\times \mathrm{Sym}^{\delta}(\PP^2)
\]
defined by sending $\Ff$ to a pair $(\Ff^{\vee\vee}, \mathrm{coker}(\Ff \rightarrow \Ff^{\vee\vee}))$ is shown to be a morphism of projective varieties, where the target space is called the {\it Uhlenbeck compactification}. Since the image of the composite $\Psi \circ \Phi_Z$ is a single point $(\Tt_{\PP^2}(-2), Z_{123})$, we obtain a generically one-to-one rational map
\[
\Phi_Z : \PP^2=\PP \mathrm{H}^0(\Ii_{Z,\PP^2}(2)) \dashrightarrow  \Psi^{-1}((\Tt_{\PP^2}(-2), Z_{123})). 
\]
Note that the fibre $\Psi^{-1}((\Tt_{\PP^2}(-2), Z_{123})$, parametrizing the kernels of the surjection $\Tt_{\PP^2}(-2) \rightarrow \Oo_{Z_{123}}$, is isomorphic to $\PP^1\times \PP^1 \times \PP^1$. It might be an interesting question to specify the linear system defining the map $\Phi_Z$. One of the possible ingredients for answering to the question, would be to investigate the logarithmic sheaves associated to singular conics.
\end{remark}


\subsection{Plane cubic curves}\label{sec-planecubics}
As stated at the beginning of the current section, the question, regarding the Torelli problem, we will focus on Sebastiani-Thom type curves of degree three.
In particular, in this section we will prove that the Torelli property does not hold for the generalized logarithmic sheaf associated to the pair $(D,Z)$ of a cubic curve $D$ of Sebastiani-Thom type and a fixed set of points $Z$, except for the trivial case when $Z$ already determines the curve $D$.  More precisely, we are going to show that for any cubic curve $D$ of Sebastiani-Thom type and any set $Z$ of three aligned inflection points of $D$, the pair $(D,Z)$ belongs to a one-dimensional family of curves with the same generalized logarithmic sheaf. 

Let $D=V(f)$ be a smooth cubic curve in $\PP^2$ and $\Tt_{\PP^2}(-\log D)$ its logarithmic tangent bundle. 
It is well known (see for instance \cite{UY2}) that  $\Tt_{\PP^2}(-\log D)$ is a stable vector bundle of rank two with Chern classes $(c_1, c_2)=(0,3)$ that fits into an exact sequence 
\[
0\to \Oo_{\PP^2}(-3) \to \Oo_{\PP^2}(-1)^{\oplus 3} \to \Tt_{\PP^2}(-\log D) \to 0,
\]
By \cite{Barth} its set of jumping lines $S_D:=S\left((\Tt_{\PP^2}(-\log D))\right)$ is a cubic curve in $(\PP^2)^\vee$. Because of the above resolution we get that $\left( \Tt_{\PP^2}(-\log D)\right)_{|L} \cong \Oo_L(-a)\oplus \Oo_L(a)$ for $a\in \{0,1\}$, i.e., the jumping lines of $\Tt_{\PP^2}(-\log D)$ are all of order $1$. Even if the jumping order is at most 1, the curve of jumping lines is sometimes singular. Let us recall below the behaviour of jumping lines (see also \cite[Proposition 5]{UY2}). We denote by $\langle\nabla f\rangle$ the vector space $\mathrm{Vect}\,(\partial_x f, \partial_y f, \partial_z f)$.

\begin{proposition} Let $L$ be a line in $\PP^2$ and denote by $\ell$ both its corresponding point in $(\PP^2)^{\vee}$ and its defining linear form.
\begin{enumerate}
    \item $l\in S_D$ if and only if it exists a linear form $h$ such that $(\ell \cdot h) \in \langle\nabla f\rangle$.
    \item   $S_D$ is singular at $\ell$  if and only if  
    $(\ell^2) \in \langle\nabla f\rangle$.
    \item If  $S_D$ is singular, then it is a triangle. 
\end{enumerate}
\end{proposition}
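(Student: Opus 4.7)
The plan for (1) is to restrict the bundle $\Tt_{\PP^2}(-\log D)$ to a line and read off the jumping condition. Dualizing the displayed resolution yields
\[
0\to\Omega^1_{\PP^2}(\log D)\to\Oo_{\PP^2}(1)^{\oplus 3}\xrightarrow{\nabla f}\Oo_{\PP^2}(3)\to 0,
\]
and twisting by $\Oo(-1)$ and restricting to $L=V(\ell)$ gives an exact sequence on $L$ with middle map $\mu_\ell\colon\Oo_L^{\oplus 3}\to\Oo_L(2)$ (the restriction stays exact because smoothness of $D$ forces $\nabla f$ to have no common zero on $L$). Since $c_1(\Tt_{\PP^2}(-\log D))=0$, the line $\ell$ lies in $S_D$ iff $\mu_\ell$ has a nontrivial kernel on global sections, i.e., iff some combination $\alpha\partial_xf+\beta\partial_yf+\gamma\partial_zf$ is divisible by $\ell$; writing this as $\ell\cdot h$ gives (1).

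For (2), the plan is to upgrade this to a $6\times 6$ determinantal presentation of $S_D$. Consider $\Phi_\ell\colon V\oplus V^*\to\mathrm{Sym}^2V^*$ defined by $((\alpha,\beta,\gamma),h)\mapsto\alpha\partial_xf+\beta\partial_yf+\gamma\partial_zf+\ell\cdot h$; one checks $\dim\ker\Phi_\ell=\dim\ker\mu_\ell$, and $\det\Phi_\ell$ is a cubic form in the coordinates of $\ell$ cutting out $S_D$. Singularity at $\ell$ amounts to $\nabla_\ell(\det\Phi_\ell)=0$, which by the adjugate formula (for corank one) becomes $V^*\cdot h_0\subseteq\mathrm{image}(\Phi_\ell)=\langle\nabla f\rangle+\ell\cdot V^*$, where $h_0\in V^*$ is the second component of a kernel generator. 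If $h_0\in\CC\cdot\ell$ we obtain $\ell^2\in\langle\nabla f\rangle$; otherwise $\langle\nabla f\rangle$ is forced into the ideal of quadrics vanishing at $V(h_0)\cap V(\ell)$, hence $\nabla f$ vanishes at that point, contradicting smoothness. Corank $\geq 2$ is ruled out similarly: otherwise all restrictions $\partial_i f|_L$ would be proportional to a single quadric on $L$ and hence share a zero on $L$, contradicting smoothness.

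For (3), the plan has two steps. Given a singular point $\ell\in S_D$, by (2) we have $\ell^2=\partial(f)$ for some constant vector field $\partial$, and a brief smoothness check shows $\partial(\ell)\neq 0$ (otherwise $D$ would acquire a singularity on $V(\ell)$). Choosing coordinates $x_0,x_1,x_2$ with $\ell=x_0$ and shearing so that $\partial\propto\partial_{x_0}$, the equation $\partial_{x_0}f\propto x_0^2$ integrates to $f=c\,x_0^3+h(x_1,x_2)$, exhibiting $D$ as Sebastiani--Thom. For such an $f$ the matrix $\Phi_\ell$ is block-triangular, and $\det\Phi_\ell$ factors (up to a nonzero constant) as $a\cdot Q(b,c)$, where $(a,b,c)$ are the coordinates of $\ell$ and $Q$ is the binary quadratic encoding $\ell^2\in\langle\partial_{x_1}h,\partial_{x_2}h\rangle$ for $\ell\in\mathrm{span}(x_1,x_2)$. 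The main obstacle is to verify that $Q$ has two distinct linear factors and that the three resulting lines in $(\PP^2)^\vee$ are non-concurrent; distinctness follows because smoothness of the binary cubic $h$ (equivalent to smoothness of $D$) ensures $\partial_{x_1}h$ and $\partial_{x_2}h$ share no common linear factor, so their span meets the Veronese conic transversely, and non-concurrency is immediate since $a$ is independent of the coordinates appearing in $Q$.
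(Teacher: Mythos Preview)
Your argument is correct, and for part (1) it coincides with the paper's approach. For parts (2) and (3), however, you take a genuinely different route from the paper.

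For (2), the paper invokes a result of Maruyama (\cite[Theorem~3.7]{Ma}) characterizing singular points of the jumping curve: $\ell$ is singular on $S_D$ if and only if $L$ remains a jumping line for the auxiliary stable bundle $\Ee$ obtained as the kernel of $\Ff\to\Oo_L(-1)$. The paper then interprets this via the zero locus of a section of $\Ff(1)$, concluding that a double line $\ell^2$ lies in the pencil of conics $\langle\nabla f\rangle$. Your determinantal approach, by contrast, is entirely self-contained: you realize $S_D$ as the vanishing of $\det\Phi_\ell$ for the $6\times 6$ matrix $\Phi_\ell\colon V\oplus V^*\to\mathrm{Sym}^2V^*$, and then read off the singularity condition via the adjugate formula. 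The case analysis on $h_0$ (proportional to $\ell$ or not) together with the smoothness of $D$ pins down the characterization cleanly. This avoids the external reference and makes the link between the algebra of $\langle\nabla f\rangle$ and the geometry of $S_D$ more transparent, at the cost of a slightly longer computation.

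For (3), the paper simply cites \cite[Proposition~5]{UY2}. You instead give a direct argument: first you use (2) to force $f$ into Sebastiani--Thom form $f=cx_0^3+h(x_1,x_2)$ (the check that $\partial(\ell)\neq 0$ is correct---if it failed, the point $[0{:}1{:}0]$ in suitable coordinates would be singular on $D$), and then you exploit the block structure of $\Phi_\ell$ to factor the jumping cubic as $a\cdot Q(b,c)$. The verification that $Q$ has distinct roots (equivalently, that the line $\langle\partial_1 h,\partial_2 h\rangle$ meets the Veronese conic of squares transversely) reduces exactly to $\partial_1 h,\partial_2 h$ having no common linear factor, which is again smoothness of $D$. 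Non-concurrency is immediate from the shape of the factorization. Your approach thus reproves the cited result with the same tools already in hand, which is a nice bonus.
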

\begin{proof}
In order to simplify the notation let us set $\Ff:=\Tt_{\PP^2}(-\log D)$. 
A line $L$ is a jumping line if $\mathrm{H}^0(\Ff(-1)\otimes \Oo_L)\neq 0$. Dualizing the resolution for $\Ff(1)$ and restricting it to $L$, we obtain 
\[
0\to \Ff(-1)\otimes \Oo_L  \to \Oo_{L}^{\oplus 3} \stackrel{\langle\nabla f\rangle_{|L}}\longrightarrow  \Oo_{L}(2)\to 0.
\]
The line $L$ is a jumping line if and only if the restricted map $(\nabla f)_{|L}$  is no longer an injection. This proves the first point.

According to \cite[Theorem 3.7]{Ma} the line $L$ is a singular point of $S_D$ if and only if 
$L$ is again a jumping line (actually the unique jumping line) for the stable bundle $\Ee \in \mathbf{M}_{\PP^2}(-1,2)$ defined by
the  exact sequence 
\[
0\to \Ee \to \Ff \to \Oo_L(-1)\to 0.
\]
Let us precise that the surjection above is just  the composition 
\[
\Ff \rightarrow \Ff\otimes \Oo_L  = \Oo_{L}(1)\oplus \Oo_{L}(-1) \rightarrow \Oo_{L}(-1)
\]
and that $\Ee$ is stable because 
$\mathrm{H}^0(\Ee)=0.$\\
 Choosing a global section of $\Tt_{\PP^2}(-\log D)\otimes \Oo_{\PP^2}(1)$, its zero locus is a complete intersection of two conics that are linear combinations of the derivatives of $f$, and $L$ is a line meeting this locus along two points. When this induces a section of $\Ee(1)$, its zero locus is two points again on $L$. This shows that the zero locus of the chosen  global section of $\Tt_{\PP^2}(-\log D)\otimes \Oo_{\PP^2}(1)$ is the complete intersection of 
 a double line supporting $L$ by another conic. Thus we have 
 \[ \ell^2\in \langle\nabla f\rangle.
 \]
The third assertion is proved in \cite[Proposition 5]{UY2}. 
\end{proof}

\begin{remark}\label{rmk-STbundle}
One can describe the jumping lines of $\Tt_{\PP^2}(-\log D)\cong \Omega_{\PP^2}^1(\log D)$ for a cubic curve $D$ of a Sebastiani-Thom type as before. Indeed, recall that any smooth cubic curve is projectively equivalent to a Hesse cubic of the form $x^3+y^3+z^3+3axyz=0$ for some $a\in \CC$ (see \cite{D}) and when $D$ is of Sebastiani-Thom type, the corresponding Hesse equation is the Fermat curve $x^3+y^3+z^3=0$. In the system of conics $\langle x^2, y^2, z^2\rangle$ spanned by the partial derivatives of $f$, there are exactly three double lines, and this implies that $S_D$ is the triangle defined by $\alpha_0\alpha_1\alpha_2=0$. Here the line $\alpha_0=0$ is dual to the point $[x:y:z]=[1:0:0]$ and similarly $\alpha_1, \alpha_2$ are defined. Indeed, each line in the triangle passes through $3$ points out of $\ell_1, \dots, \ell_9$. We get the same assertion for smooth cubic curves defined by $a_0x^3+a_1y^3+a_2z^3=0$ with each $a_i\in \CC^{\times}$. More explicitly, we have a resolution given by
\[
0 \to \Oo_{\PP^2}(-3) \stackrel{\mathrm{M}}{\longrightarrow} \Oo_{\PP^2}(-1)^{\oplus 3} \to \Tt_{\PP^2}(-\log D) \to 0,
\]
with $\mathrm{M}=
\begin{pmatrix}
x^2 & y^2 & z^2 \end{pmatrix}^t$. By direct computation, we have that
\[
 \left(\Tt_{\PP^2}(-\log D)\right)_{|L} \cong
\left\{
\begin{array}{cll}
    \Oo_L(-1) \oplus  \Oo_L(1) & \mbox{if} \:\: L \cap \left\{[1:0:0], [0:1:0], [0:0:1]\right\} \neq \emptyset;   \vspace{3mm}\\
    \Oo_L^{\oplus 2} & \mbox{for every other line} \:\: L. 
\end{array}
\right.
\]

In particular, the Torelli property does not hold for cubic curves of Sebastiani-Thom type.
\end{remark}

\bigskip

As a direct consequence of the previous description of the jumping lines of the logarithmic sheaf associated to a Sebastiani-Thom type cubic, we can observe the following situation: whenever the set of jumping lines $S_{D_1}$ and $S_{D_2}$ of the respective logarithmic sheaves of two Sebastiani-Thom type cubics $D_1$ and $D_2$ are different, then we have that $\Omega_{\PP^2}^1(\log  D_1)\not\cong \Omega_{\PP^2}^1(\log D_2)$. Therefore, it is enough to consider the Torelli problem for divisors which share the same triangle of jumping lines. Up to a projective transformation, we can suppose the triangle to be defined by $xyz=0$ and the inflection points of the cubic on the line defined by $z=0$ to be $p_i=[1:\eta^i : 0]$, for $i=0,1,2$ and $\eta$ a third root of the unity. In other words, one may consider the family of Sebastiani-Thom type cubics defined as
\[
\mathrm{ST}(3)_0 = \left\{ V(x^3-y^3+az^3) \:|\: a\in \CC^{\times} \right\}.
\]
Consider also the set of three points $W=\{[1:0:0], [0:1:0], [0:0:1]\}$, which do not belong to any curve in $\mathrm{ST}(3)_0$. In the next theorem we are going to see that the generalized logarithmic sheaf $\Omega_{\PP^2}^1(\log (D,Z))$ does not determine the cubic curve $D\in \mathrm{ST}(3)_0$, unless $Z\subset D$ already determines $D$ unambiguously. On the other hand, in Subsection $5.3$ we will see that the Torelli property holds for the strict transforms of the curves from $\mathrm{ST}(3)_0$ in the blown-up surface $\mathrm{Bl}_W \PP^2$.

\bigskip

\begin{theorem}\label{STP}
The Torelli property does not hold for pairs $(D,Z)$ of  cubic curves $D$ of Sebastiani-Thom type and points $Z$ on them, unless $Z$ determines uniquely $D$.\\
Moreover, for any cubic curve $D$ of Sebastiani-Thom type and any set $Z$ of three aligned inflection points of $D$, the pair $(D,Z)$ belongs to a one-dimensional family of curves that share the same generalized logarithmic sheaf. \end{theorem}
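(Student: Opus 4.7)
The plan is to construct an explicit one-parameter family of Sebastiani-Thom cubics through three prescribed aligned inflection points and, via a Steiner-type resolution of their logarithmic bundles, a universal isomorphism that descends to an isomorphism of generalized logarithmic sheaves. The computation at the heart of this is that as the parameter varies, the Poincar\'e residue onto the common finite subscheme $Z$ changes only by a single global scalar.

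First I would normalize: after a projective change of coordinates, one may assume $D = V(x^3 - y^3 + a_0 z^3)$ for some $a_0 \in \CC^\times$, and a Hessian computation shows that the three flexes of $D$ on $L = V(z)$ are exactly $Z = \{p_j = [1:\eta^j:0] : j = 0, 1, 2\}$ with $\eta = e^{2\pi i/3}$. The natural family is then $\mathrm{ST}(3)_0 = \{D_a = V(x^3 - y^3 + a z^3) : a \in \CC^\times\}$, all of whose members are smooth Sebastiani-Thom cubics through $Z$. By Remark \ref{rmk-STbundle} each $\Omega^1_{\PP^2}(\log D_a)$ is the cokernel of $(x^2, -y^2, a z^2)^{t} : \Oo_{\PP^2}(-3) \to \Oo_{\PP^2}(-1)^{\oplus 3}$; the diagonal endomorphism $\mathrm{diag}(1, 1, a)$ of $\Oo_{\PP^2}(-1)^{\oplus 3}$ intertwines the first maps of these resolutions for $a = 1$ and general $a$, and thus descends to an isomorphism $\psi_a : \Omega^1_{\PP^2}(\log D_1) \xrightarrow{\sim} \Omega^1_{\PP^2}(\log D_a)$ satisfying $\psi_a(\omega_i^{(1)}) = \omega_i^{(a)}$ for $i = 1, 2$ and $\psi_a(\omega_3^{(1)}) = a\, \omega_3^{(a)}$ on the standard generators $\omega_i^{(a)}$ of $H^0(\Omega^1_{\PP^2}(\log D_a)(1))$.

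Next I would identify the Poincar\'e residues and compare surjections. Applying $\mathrm{Res}^{(a)} : \Omega^1_{\PP^2}(\log D_a)(1) \to \Oo_{D_a}(1)$ to the syzygy $x^2 \omega_1^{(a)} - y^2 \omega_2^{(a)} + a z^2 \omega_3^{(a)} = 0$ and matching monomials in $H^0(\Oo(3))/(f_a)$ pins down $\mathrm{Res}^{(a)}(\omega_i^{(a)}) = \lambda_a\, x_i|_{D_a}$ for a single scalar $\lambda_a \in \CC^\times$. For any $p \in Z$ one has $z(p) = 0$, which collapses the $a$-twist in the third slot: the composite $\mathrm{Res}^{(a)} \circ \psi_a$ sends $\omega_j^{(1)}$ to $\lambda_a\, x_j(p)$ for $j = 1, 2$ and to $a\lambda_a z(p) = 0$ for $j = 3$, so it coincides with $\mathrm{Res}^{(1)}$ up to the single unit scalar $\lambda_a / \lambda_1$. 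Since scalar multiplication is an automorphism of $\Oo_Z$, the two surjections have identical kernel, and $\psi_a$ restricts to an isomorphism $\Omega^1_{\PP^2}(\log(D_1, Z)) \xrightarrow{\sim} \Omega^1_{\PP^2}(\log(D_a, Z))$ for every $a \in \CC^\times$, yielding the one-dimensional family asserted in the ``Moreover'' clause.

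For the first assertion, I would note that subtracting the equations of any two $D_a, D_b \in \mathrm{ST}(3)_0$ gives $(a - b) z^3 = 0$, so their reduced intersection is exactly $Z$; hence for every nonempty $Z' \subseteq Z$ the same residue argument on the smaller subset shows that all of the $D_a$ share a common $\Omega^1_{\PP^2}(\log(\cdot, Z'))$. More generally, whenever $Z'$ does not uniquely determine $D$ among smooth Sebastiani-Thom cubics, choosing the ST decomposition of $D$ that exhibits this indeterminacy places $Z'$ in the base of a one-parameter family of the above type and produces a distinct $D' \ne D$ with $\Omega^1_{\PP^2}(\log(D', Z')) \cong \Omega^1_{\PP^2}(\log(D, Z'))$, so the Torelli property fails. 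The principal technical obstacle I anticipate is the residue identification---verifying that the scalars $\lambda_a$ are independent of the index $i$ and are pinned down by the syzygy alone; the subsequent bookkeeping is elementary, but it hinges on the simultaneous vanishing of $z$ on $Z$, which collapses the three separate scalar discrepancies to a single uniform factor.
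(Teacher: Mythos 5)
Your construction is correct, and it is genuinely different from --- and considerably more direct than --- the argument in the paper. The paper proceeds by a case analysis on $|Z|\in\{1,2,3\}$: for $|Z|=1$ it computes, via the Horseshoe Lemma and a delicate elimination of possible presentation matrices, the minimal free resolution of $\Omega_{\PP^2}^1(\log(D_a,\{p_0\}))$ and observes that the resulting matrix $\widetilde{\mathrm{N}}$ is independent of $a$ (Proposition \ref{prop555}); the cases $|Z|=2,3$ are then handled by extension-theoretic arguments (uniqueness of the relevant classes in $\Ext^1$, plus a syzygy computation ruling out a degenerate subcase). Your route instead produces the isomorphism explicitly: $\mathrm{diag}(1,1,a)$ intertwines the Koszul presentations of the logarithmic bundles of $D_1$ and $D_a$, and the syzygy $x^2\ell_1-y^2\ell_2+az^2\ell_3\equiv 0 \pmod{f_a}$ forces any map $\mathrm{coker}(\mathrm{M}_a)\to\Oo_{D_a}$ to be a scalar multiple of $(x,y,z)|_{D_a}$, so the residue is pinned down up to one scalar $\lambda_a$; since $z$ vanishes on $Z$, the $a$-twist in the third slot dies after restriction to $\Oo_Z$, the two surjections onto $\Oo_Z$ become proportional, and $\psi_a$ carries one kernel onto the other. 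This treats every nonempty $Z'\subseteq\{p_0,p_1,p_2\}$ uniformly and in one stroke, which is exactly the content of the ``Moreover'' clause. Two remarks. First, the cokernel of $(x^2,-y^2,az^2)^t$ is, strictly speaking, a presentation of $\Tt_{\PP^2}(-\log D_a)$, which is only abstractly isomorphic to $\Omega_{\PP^2}^1(\log D_a)$ (rank two, $c_1=0$); your argument survives because you only use that the transported residue is \emph{some} nonzero element of the one-dimensional space $\Hom(\mathrm{coker}(\mathrm{M}_a),\Oo_{D_a})$, but this should be said explicitly. Second, your reduction of the first assertion to the aligned-flex configuration is stated at the same level of informality as the paper's own reduction (both implicitly restrict the comparison class to Sebastiani--Thom cubics sharing the jumping triangle and the flexes on $z=0$), so no rigor is lost relative to the paper there. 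What the paper's longer route buys, and yours does not, is the explicit minimal graded free resolution of the generalized logarithmic sheaf.
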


Let $D$ be a smooth cubic curve of Sebastiani-Thom type. As observed before, after a proper change of coordinates, we can suppose that $D$ belongs to the one-dimensional family $\mathrm{ST}(3)_0$, consisting of the cubic curves $D_a$ of Sebastiani-Thom type, defined as
\[
D_a=V(x^3-y^3+az^3)
\]
with $a\in \CC^{\times}$.  All of $D_a$ share the same three inflection points $p_i=[1:\eta^i : 0]$, for $i=0,1,2$ and $\eta$ a third root of unity, on the line $z=0$. Furthermore, the one-dimensional family we have just considered describes all of cubics of Sebastiani-Thom type that share these inflections points with the fixed curve $D$.\\
By direct computation and as observed in Remark \ref{rmk-STbundle} (or also explained in \cite[Remark 8]{UY2}), all of these curves have the same associated logarithmic vector bundle.\\ 

Now we fix points $Z\subset D$ and investigate the Torelli property for all possible pair $(D_a, Z)$. Notice that if $Z$ contains a non-inflection point, there exists a unique curve $D_a \in \mathrm{ST}(3)_0$ passing through $Z$ and then the Torelli property obviously holds. Notice also that, analogously, the curve is uniquely determined when $Z$ is formed by inflection points not contained in the line $z=0$. Thus we may assume $Z\subset \{p_0, p_1, p_2\}$, i.e., a subset of the three inflection points on the line $z=0$.

\begin{remark}
We are aware that, if we prove that the Torelli property does not hold for the pair $(D,\{p_0,p_1,p_2\})$, i.e., considering all three inflection points, then, due to Proposition \ref{prop-Torelliplusone}, the Torelli property does not hold automatically for $(D,Z)$ for any $Z\subset \{p_0,p_1,p_2\}$. Nevertheless, we will see that the proposed method of the proof is constructive, in the sense that we start by fixing one inflection point and afterwards we add a second and a third one, relating the given generalized logarithmic sheaves from two consecutive steps.
\end{remark}

\vspace{.3cm}

\noindent\textbf{(A) \quad Case $|Z|=1$}

Let us first consider all possible pairs $(D_a,Z)$, being $Z=\{p_0\}$ (recall $p_0=[1:1:0])$, for any cubic curve $D_a$ in the family described above.\\ 
Then, in this case, Theorem \ref{STP} follows from the following result, that shows how the defining matrix of the associated generalized logarithmic sheaf is independent of the value of $a$.

\begin{proposition}\label{prop555}
The minimal free resolution of $\:\:\Omega_{\PP^2}^1(\log (D_a, \{p_0\}))$, for any $a\in \CC^{\times}$, is of the form
\[
0\to \Oo_{\PP^2}(-3)^{\oplus 2} \stackrel{\mathrm{\tilde{N}}}{\longrightarrow} \Oo_{\PP^2}(-2)^{\oplus 2}\oplus  \Oo_{\PP^2}(-1)^{\oplus 2} \to \Omega_{\PP^2}^1(\log (D_a,\{p_0\})) \to 0,
\]
with
\[
\mathrm{\tilde{N}}=
\begin{pmatrix}  
y-x & z & 0 & 0\\ 
0 & x+y & y^2  & z^2 \\ 
\end{pmatrix}^t.
\]
\end{proposition}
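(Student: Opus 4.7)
The natural approach is to apply the Horseshoe Lemma to the Poincar\'e residue sequence
\[
0 \to \Omega_{\PP^2}^1 \to \Omega_{\PP^2}^1(\log (D_a, \{p_0\})) \to \Oo_{D_a}(-p_0) \to 0
\]
of Definition \ref{def-genlog}, and then simplify. For the outer terms I would use the Koszul resolution $0 \to \Oo(-3) \xrightarrow{(x,y,z)^t} \Oo(-2)^{\oplus 3} \to \Omega_{\PP^2}^1 \to 0$, together with a resolution of $\Oo_{D_a}(-p_0)$ built from the short exact sequence $0 \to \Oo(-3) \xrightarrow{\cdot f_a} \Ii_{p_0, \PP^2} \to \Oo_{D_a}(-p_0) \to 0$ (valid since $f_a(p_0)=0$) combined with the Koszul resolution $0 \to \Oo(-2) \xrightarrow{(y-x,-z)^t} \Oo(-1)^{\oplus 2} \to \Ii_{p_0,\PP^2} \to 0$ on the generators $z, y-x$ of $\Ii_{p_0,\PP^2}$. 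The key input is to write the cubic in terms of these ideal generators: using the factorization $y^3-x^3=(y-x)(y^2+xy+x^2)$, one has
\[
f_a = (az^2)\cdot z + \bigl(-(x^2+xy+y^2)\bigr)(y-x),
\]
and hence the mapping cone produces the minimal resolution
\[
0 \to \Oo(-3) \oplus \Oo(-2) \xrightarrow{\left(\begin{smallmatrix} az^2 & y-x \\ -(x^2+xy+y^2) & -z \end{smallmatrix}\right)} \Oo(-1)^{\oplus 2} \to \Oo_{D_a}(-p_0) \to 0.
\]

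Applying the Horseshoe Lemma then yields the non-minimal resolution
\[
0 \to \Oo(-3)^{\oplus 2} \oplus \Oo(-2) \to \Oo(-2)^{\oplus 3} \oplus \Oo(-1)^{\oplus 2} \to \Omega_{\PP^2}^1(\log (D_a, \{p_0\})) \to 0,
\]
whose differential has the block form $\left(\begin{smallmatrix} d_P & \sigma \\ 0 & d_R\end{smallmatrix}\right)$. Choosing appropriate lifts of the two generators of the middle $\Oo(-1)^{\oplus 2}$ to sections of $\Omega_{\PP^2}^1(\log(D_a,\{p_0\}))(1)$ (which is $2$-dimensional globally, as one computes via the long exact sequence associated to the double-dual sequence of Proposition \ref{prop234}), the connecting map $\sigma$ acquires a non-zero unit in its $\Oo(-2)\to \Oo(-2)^{\oplus 3}$ block. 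Standard row and column operations then cancel this redundant $\Oo(-2)$ summand against one of the three $\Oo(-2)$ summands of the middle, producing the announced minimal resolution with matrix $\tilde N$. In this matrix, the first column $(y-x, z, 0, 0)^t$ records the Koszul relation on the two $\Oo(-2)$-generators recovered from the generators $z, y-x$ of $\Ii_{p_0,\PP^2}$, while the second column $(0, x+y, y^2, z^2)^t$ records the remaining piece of the cubic equation: the entries $y^2, z^2$ descend from the gradient entries $-y^2, az^2$ in the resolution of $\Omega_{\PP^2}^1(\log D_a)\cong \Tt_{\PP^2}(-\log D_a)$, with the factor $a$ absorbed by rescaling an $\Oo(-1)$-basis vector, and the entry $x+y$ arises as the coefficient that appears in the expression of the global section $az^2\omega_1-(x^2+xy+y^2)\omega_2\in \Gamma(\Omega^1(3))$ in terms of the Koszul generators $\alpha_{12},\alpha_{13},\alpha_{23}$ of $\Omega^1$ after cancellation. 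Since no entry of $\tilde N$ involves $a$, the resolution is the same for every $a\in\CC^\times$, consistent with Remark \ref{rmk-STbundle} where the ambient sheaf $\Omega_{\PP^2}^1(\log D_a)$ is itself $a$-independent up to isomorphism.

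The main obstacle is the delicate bookkeeping in the Horseshoe step: explicitly computing $\sigma$ and tracking how the row/column operations transform the remaining entries to produce exactly the announced $\tilde N$. A cleaner alternative would be to posit $\tilde N$ directly and verify that its cokernel $F$ is torsion-free of rank two with Chern classes $(c_1,c_2)=(0,4)$ (immediate from the resolution) and singular locus exactly $\{p_0\}$ (from the Fitting ideal generated by the $2\times 2$ minors $y^2-x^2,\;(y-x)y^2,\;(y-x)z^2,\;zy^2,\;z^3$, which cut out $\{p_0\}$ scheme-theoretically), and that $F^{\vee\vee}\cong \Omega_{\PP^2}^1(\log D_a)$; then an analogue of Proposition \ref{unique-singular-extension} for cubics would force $F\cong \Omega_{\PP^2}^1(\log(D_a,\{p_0\}))$ and give the $a$-independence of the resolution directly.
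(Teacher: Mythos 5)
Your overall strategy coincides with the paper's: apply the Horseshoe Lemma to the Poincar\'e residue sequence, using exactly the resolution of $\Oo_{D_a}(-p_0)=\Ii_{\{p_0\},D_a}$ built from the mapping cone on $0\to\Ii_{D_a,\PP^2}\to\Ii_{\{p_0\},\PP^2}\to\Ii_{\{p_0\},D_a}\to 0$, and then cancel a redundant $\Oo_{\PP^2}(-2)$ summand. However, the two steps that carry essentially all of the content are asserted rather than proved. First, the claim that ``the connecting map $\sigma$ acquires a non-zero unit in its $\Oo_{\PP^2}(-2)\to\Oo_{\PP^2}(-2)^{\oplus 3}$ block'' is exactly the statement that the Horseshoe resolution is \emph{not} minimal, and this is not a matter of choosing lifts: whether the five generators can be reduced to four is an invariant of the sheaf (note that the Hilbert function, and in particular $\mathrm{h}^0(\Omega^1_{\PP^2}(\log(D_a,\{p_0\}))(1))=2$, is the same for both the five- and four-generator candidates, so your dimension count does not decide it). The paper devotes the bulk of its proof to ruling out the case where all three constants vanish, by producing from that hypothesis a rank-one quotient $\Gg$ with resolution $0\to\Oo_{\PP^2}(-3)^{\oplus 2}\to\Oo_{\PP^2}(-2)^{\oplus 3}\to\Gg\to 0$, showing $\Gg\cong\Ii_{\Delta,\PP^2}$ via a restriction-to-a-general-line argument, and deriving a nonzero section of $\Tt_{\PP^2}(-\log D_a)$, contradicting stability. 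Some argument of this kind is indispensable.

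Second, even granting the cancellation, the specific entries of $\tilde{\mathrm{N}}$ --- above all that the two quadric entries are exactly $y^2$ and $z^2$, with no dependence on $a$, and that the linear entry is $x+y$ --- do not simply ``descend'' from the gradient entries; a priori the surviving conics could be $a$-dependent combinations such as $x^2+xy+y^2$ and $az^2$ (the very entries of your resolution of $\Ii_{\{p_0\},D_a}$). The paper pins them down by extracting from $\tilde{\mathrm{N}}$ the extension $0\to\Ii_{\{p_0\},\PP^2}(-1)\to\Omega^1_{\PP^2}(\log(D_a,\{p_0\}))\to\Ii_{Q,\PP^2}(1)\to 0$, dualizing to obtain $0\to\Oo_{\PP^2}(-1)\to\Omega^1_{\PP^2}(\log D_a)\to\Ii_{Q,\PP^2}(1)\to 0$, and then using the mapping cone on the known resolution of $\Omega^1_{\PP^2}(\log D_a)$ by the matrix $(x^2\ y^2\ z^2)^t$ to force $I_Q=(y^2,z^2)$. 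Your ``cleaner alternative'' does not repair this: the properties you propose to verify (torsion-free of rank two, correct Chern classes, singular exactly at $p_0$, double dual $\Omega^1_{\PP^2}(\log D_a)$) are shared by the entire $\PP^1$-family of elementary transformations of $\Omega^1_{\PP^2}(\log D_a)$ along $p_0$, so they cannot single out the generalized logarithmic sheaf; the cubic analogue of Proposition \ref{unique-singular-extension} that you invoke is precisely what would need to be established, and the paper's conic-case proof (uniqueness of the surjection $\Tt_{\PP^2}\to\Oo_Q(2)$) does not carry over verbatim.
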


\begin{proof}

We start by applying the Horseshoe Lemma (see \cite[Lemma 2.2.8]{weibel}), which can be applied as in Remark \ref{keyrem}, to the short exact sequence
\begin{equation}\label{a1}
0\to\Omega_{\PP^2}^1\to \Omega_{\PP^2}^1(\log (D_a,\{p_0\}))  \to\Ii_{\{p_0\}, D_a}\to 0
\end{equation}
in order to find a resolution for $\Omega_{\PP^2}^1(\log (D_a,\{p_0\}))$. From the short exact sequence
\[
0\to\Ii_{D_a, \PP^2} \to\Ii_{\{p_0\}, \PP^2} \to \Ii_{\{p_0\}, D_a}\to 0,
\]
 that relates the two ideal sheaves $\Ii_{D_a, \PP^2}$ and $\Ii_{\{p_0\}, \PP^2}$, respectively of the cubic curve and of one point in $\PP^2$, with $\Ii_{\{p_0\}, D_a}$, the ideal sheaf of the point in the curve, it is possible to construct the following resolution of $\Ii_{\{p_0\}, D_a}$, 
\[
0\to \Oo_{\PP^2}(-3)\oplus\Oo_{\PP^2}(-2)\stackrel{\mathrm{M}_0}{\longrightarrow}\Oo_{\PP^2}(-1)^{\oplus 2} \to\Ii_{\{p_0\} , D_a}\to 0,
\]
where 
\[
\mathrm{M}_0=\begin{pmatrix}  x^2+xy+y^2 & az^2 \\ -z  & x-y  \end{pmatrix}^t.
\]
From this, applying the Horseshoe Lemma at the sequence (\ref{a1}), we get the resolution 
\begin{equation}\label{epp1}
0\to \Oo_{\PP^2}(-3)^{\oplus 2} \oplus\Oo_{\PP^2}(-2)\stackrel{\mathrm{N}}{\longrightarrow }\Oo_{\PP^2}(-2)^{\oplus 3} \oplus\Oo_{\PP^2}(-1)^{\oplus 2} \to \Omega_{\PP^2}^1(\log (D_a,\{p_0\})) \to 0
\end{equation}
with
\[
\mathrm{N}=\begin{pmatrix}  x & y & z & 0 & 0\\ l_1 & l_2 &l_3 & x^2+xy+y^2 & az^2 \\ m_1& m_2 & m_3 & -z & x-y  \end{pmatrix}^t
\]
with $m_i$ forms of degree zero and $l_i$ forms of degree one, for $i=1,2,3$.\\ 
 Notice that, being $\Omega_{\PP^2}^1(\log (D_a,\{p_0\}))$ not locally free along $p_0$ (see Proposition \ref{prop234}), the evaluation of $\mathrm{N}$ at the point $p_0=[1:1:0]$ cannot have maximal rank. This directly implies that $m:=m_1=m_2$ and $m_3=0$.\\ 
Our next goal is to prove that the coefficients $m_1$ and $m_2$ are different from zero, which implies that the resolution (\ref{epp1}) can be ``simplified''. Suppose it cannot be, i.e., let us suppose that $m_1 = m_2 = m_3 = 0$. Under such assumption, we have the following commutative diagram

\begin{equation}\label{diag-simplify}
\begin{tikzcd}[
 column sep=small,row sep=normal,
  ar symbol/.style = {draw=none,"\textstyle#1" description,sloped},
  isomorphic/.style = {ar symbol={\cong}},
  ]
& 0 \ar[d] & 0 \ar[d] & 0\ar[d] \\
0 \ar[r] & \Oo_{\PP^2}(-2) \ar[d] \ar[r,"\mathrm{A}"] &  \Oo_{\PP^2}(-1)^{\oplus 2} \ar[r] \ar[d] & \mathcal{I}_{\{p_0\}, \PP^2} \ar[r] \ar[d] & 0\\
0 \ar[r] & \Oo_{\PP^2}(-3)^{\oplus 2} \oplus \Oo_{\PP^2}(-2) \ar[d] \ar[r] & \Oo_{\PP^2}(-2)^{\oplus 3} \oplus \Oo_{\PP^2}(-1)^{\oplus 2} \ar[r] \ar[d] &  \Omega_{\PP^2}^1(\log (D_a,\{p_0\})) \ar[r] \ar[d] & 0\\
0\ar[r] & \Oo_{\PP^2}(-3)^{\oplus 2} \ar[r, "\mathrm{B}"] \ar[d] & \Oo_{\PP^2}(-2)^{\oplus 3} \ar[r] \ar[d] & \mathcal{G} \ar[r] \ar[d] & 0\\
& 0 & 0 & 0
\end{tikzcd}
\end{equation}
for $\mathrm{A}=\begin{pmatrix} -z & x-y\end{pmatrix}$ and $\mathrm{B}=\begin{pmatrix} x&y&z\\ l_1 & l_2 & l_3 \end{pmatrix}$. 
Furthermore, considering the rank 1 coherent sheaf $\Gg$ obtained in the previous diagram, take its associated canonical exact sequence defined by
\begin{equation}\label{diag-resolution}
\begin{tikzcd}[
 column sep=normal,row sep=small,
  ar symbol/.style = {draw=none,"\textstyle#1" description,sloped},
  isomorphic/.style = {ar symbol={\cong}},
  ]
0\ar[r] &\Tt_{\Gg} \ar[r]& \Gg \ar[rr] \ar[rd] && \Gg^{\vee\vee} \ar[r] & \Qq \ar[r] & 0\\
&&&\Ii_{\Delta, \PP^2}(-\alpha) \ar[ru]\ar[rd]\\
&& 0\ar[ru] &&0
\end{tikzcd}
\end{equation}
being $\Tt_{\Gg}$ the maximal torsion subsheaf of $\Gg$ and $\Gg^{\vee\vee}$ the double dual of $\Gg$ which is known to be reflexive and, therefore, a locally free sheaf in this case (see \cite[Section II.1]{OSS}). This implies that the splitting of the exact sequence gives us a sheaf of ideals $\Ii_{\Delta, \PP^2}(-\alpha)$, with $\Delta$ a $0$-dimensional scheme in the projective plane and $\alpha \geq 0$.\\ 
 Restricting the right vertical sequence in Diagram (\ref{diag-simplify}) and Sequence (\ref{diag-resolution}) to the generic line $L$ (not passing through $\{p_0\}$) in the projective plane, we obtain the following composition of surjections
\[
\left(\Omega_{\PP^2}^1(\log (D_a,\{p_0\}))\right)_{|L} \simeq \left(\Omega_{\PP^2}^1(\log D_a) \right)_{|L} \simeq \Oo_L^
{\oplus 2} \twoheadrightarrow \mathcal{G}_{|L} \twoheadrightarrow \left(\mathcal{I}_{\Delta, \PP^2}(-\alpha)\right)_{|L} \simeq \Oo_L(-\alpha).
\]
This implies that, necessarily, $\alpha=0$ and therefore $\mathcal{T}_\mathcal{G}$ is supported on a 0-dimensional scheme. From the fact that $\mathrm{H}^0\left(\mathcal{G}\right) = 0$, computed directly from the bottom row of Diagram (\ref{diag-simplify}), we get $\mathcal{T}_\mathcal{G} = 0$ and hence $\mathcal{G} \simeq \mathcal{I}_{\Delta, \PP^2}$. From such isomorphism, dualizing the right vertical sequence in  Diagram (\ref{diag-simplify}), we would get
\[
0 \rightarrow \Oo_{\PP^2} \rightarrow \left(\Omega_{\PP^2}^1(\log (D_a,\{p_0\}))\right)^{\lor} \simeq \Tt_{\PP^2}(-\log D_a),
\]
which leads to contradiction, because the logarithmic tangent bundle associated to a Sebastiani-Thom cubic has no global section.\\

As a consequence, $m=m_1=m_2 \neq 0$ and we can simplify the resolution in (\ref{epp1}) to get
\[
0\to \Oo_{\PP^2}(-3)^{\oplus 2} \stackrel{\mathrm{\widetilde{N}}}{\longrightarrow }\Oo_{\PP^2}(-2)^{\oplus 2} \oplus\Oo_{\PP^2}(-1)^{\oplus 2} \to \Omega_{\PP^2}^1(\log (D_a,\{p_0\}))\to 0
\]
with
\[
\mathrm{\widetilde{N}}=
\begin{pmatrix}  
y-x & z & 0 & 0\\ 
l_2 - l_1 & l_3 & x^2+xy+y^2 + \frac{l_1 z}{m} - \frac{l_3 x}{m}  & az^2 +
\frac{l_1 y}{m} - \frac{l_2 x}{m} \\ 
\end{pmatrix}^t.
\]
This matrix, and in particular its non zero entries in the first row, implies an injective map 
$$
0\rightarrow \Ii_{\{p_0\}, \PP^2}(-1) \rightarrow \Omega_{\PP^2}^1(\log (D_a,\{p_0\})),
$$ 
for which we can consider the associated injective map of graded modules $$
0\rightarrow \mathrm{I}:=\oplus\mathrm{H}^0_*(\Ii_{\{p_0\}, \PP^2}(-1))\stackrel{\widehat{\varphi}}{\longrightarrow}\mathrm{F}:=\mathrm{H}^0_*(\Omega_{\PP^2}^1(\log (D_a,\{p_0\}))$$ and denote by $\mathrm{K}$ its cokernel module. From the description of the matrix $\widetilde{\mathrm{N}}$, it is possible to apply the mapping cone procedure to obtain a graded free resolution of the module $\mathrm{K}$, that is  of the form
\[
0\to S(-3)\stackrel{(q_1 \, q_2)}{\xrightarrow{\hspace*{1.2cm}}}S(-1)^{\oplus 2}\to \mathrm{K}\to 0
\]
where $q_1=x^2+xy+y^2 + \frac{l_1 z}{m} - \frac{l_3 x}{m} $ and $q_2=az^2 +
\frac{l_1 y}{m} - \frac{l_2 x}{m}$. Therefore, $\mathrm{K}$ is the graded ideal $I_Q$ of the complete intersection $Q$ defined by the two conics $q_1$ and $q_2$, twisted by one. Sheafifying the considered short exact sequence of modules, we have that the sheaf $\Omega_{\PP^2}^1(\log (D_a,\{p_0\})$ can also be described as an extension
\begin{equation}\label{werrw1}
0 \to \Ii_{\{p_0\}, \PP^2}(-1) \stackrel{\varphi}{\longrightarrow} \Omega_{\PP^2}^1(\log (D_a,\{p_0\}) \to \Ii_{Q, \PP^2}(1) \to 0, 
\end{equation}
where $Q$ is the 0-dimensional scheme of length $4$ obtained before. 
Moreover, because of the resolution of $\Omega_{\PP^2}^1(\log (D_a,\{p_0\})$, the two forms $q_1$ and $q_2$ cannot have a linear common factor. \\
Dualizing Sequence (\ref{werrw1}), we obtain a long exact sequence
\begin{equation}\label{aaa}
\begin{array}{l}
    0 \to \Oo_{\PP^2}(1) \to \Omega_{\PP^2}^1(\log D_a) \to \Oo_{\PP^2}(1) \to \mathcal{E}xt_{\PP^2}^1(\Ii_{Q, \PP^2}(1),\Oo_{\PP^2})\\
     \to \mathcal{E}xt_{\PP^2}^1(\Omega_{\PP^2}^1(\log (D_a,\{p_0\}),\Oo_{\PP^2})\stackrel{\psi}{\longrightarrow} \mathcal{E}xt_{\PP^2}^1(\Ii_{\{p_0\}, \PP^2}(-1),\Oo_{\PP^2})\to \mathcal{E}xt_{\PP^2}^2(\Ii_{Q, \PP^2}(1),\Oo_{\PP^2})\cong 0.
\end{array}
\end{equation}
Furthermore, being $p_0$ and $Q$ two 0-dimensional schemes in the projective plane, we have that 
\[
\mathcal{E}xt_{\PP^2}^1(\Ii_{Q, \PP^2}(1),\Oo_{\PP^2})\cong \mathcal{E}xt_{\PP^2}^2(\Oo_Q,\Oo_{\PP^2})\cong\Oo_Q.
\]
and 
$$
\mathcal{E}xt_{\PP^2}^1(\Ii_{\{p_0\}, \PP^2}(-1),\Oo_{\PP^2})\cong
\mathcal{E}xt_{\PP^2}^2(\Oo_{p_0},\Oo_{\PP^2})\cong\Oo_{p_0}.
$$
Consider now the short exact sequence (given by Proposition (\ref{prop234})
\begin{equation}\label{ppp222}
0 \to \Omega_{\PP^2}^1(\log (D_a,\{p_0\}) \to \Omega_{\PP^2}^1(\log (D_a) \simeq \Tt_{\PP^2}(-\log D_a) \to \Oo_{p_0} \to 0.
\end{equation}
and, dualizing it, we obtain that $\mathcal{E}xt_{\PP^2}^1(\Ff,\Oo_{\PP^2})\cong\Oo_{p_0}$ as well.\\ 
Putting everything together, we get that the map $\psi$ in (\ref{aaa}) is an isomorphism and the splitting of the sequence (\ref{aaa}) gives us
\begin{equation}\label{aa2}
0 \to \Oo_{\PP^2}(-1) \to \Omega_{\PP^2}^1(\log D_a) \to \Ii_{Q, \PP^2}(1) \to 0.
\end{equation}
Recall that the resolution of $\Omega_{\PP^2}^1(\log D_a)$ was of the form
\[
0\to \Oo_{\PP^2}(-3)\stackrel{\mathrm{M}_1}{\longrightarrow}\Oo_{\PP^2}(-1)^{\oplus 3}\to \Omega_{\PP^2}^1(\log D_a)\to 0.
\]
with $\mathrm{M}_1=\begin{pmatrix} x^2 & y^2 & z^2\end{pmatrix}^t$. From this fact and the exact sequence (\ref{aa2}), it is possible to apply the mapping cone to find a resolution of $\Ii_{Q, \PP^2}(1)$, which implies that $Q$ is a zero-dimensional scheme of length $4$
whose ideal $I_Q$ is defined by two of the entries in $\mathrm{M}_1$. Since this ideal should also be defined by the conics in the matrix $\widetilde{\mathrm{N}}$ we can conclude, comparing coefficients, that the only possibility is
$I_Q=(y^2,z^2)$ and that the matrix $\widetilde{\mathrm{N}}$ can be taken of the form
\[
\mathrm{\widetilde{N}}=
\begin{pmatrix}  
y-x & z & 0 & 0\\ 
0 & x+y & y^2  & z^2 \\ 
\end{pmatrix}^t. \qedhere
\]
\end{proof}

\vspace{.3cm}

\noindent\textbf{(B) \quad Case $|Z|=2$}

Without loss of generality we may assume that $Z=\{p_0, p_1\}$ and consider the short exact sequence (see the proof of Proposition \ref{prop-Torelliplusone})
\[
0\to \Omega_{\PP^2}^1(\log (D_a, \{p_0, p_1\})) \to \Omega_{\PP^2}^1(\log (D_a, \{p_0\})) \stackrel{g}{\longrightarrow} \Oo_{p_1} \to 0.
\]
Consider the injective map $\varphi$ in Sequence (\ref{werrw1}) to construct the following composition 
$$
\rho: \Ii_{\{p_0\}, \PP^2}(-1) \stackrel{\varphi}{\rightarrow} \Omega_{\PP^2}^1(\log (D_a, \{p_0\})) \stackrel{g}{\longrightarrow} \Oo_{p_1}.
$$
We will now discuss the two subcases given by $\rho$ being or not zero.

\vspace{.3cm}

\noindent\textbf{(Case B-1)} 

Assume first that $\rho$ is not zero. Then it is surjective and its kernel is isomorphic to $\Ii_{\{p_0,p_1\}, \PP^2}(-1)$. This produces, because of the vanishing $\mathrm{Ext}_{\PP^2}^1(\Oo_{p_1}, \Ii_{Q, \PP^2}(1))=0$, a commutative diagram\\

\[
\begin{tikzcd}[
 column sep=small,row sep=small,
  ar symbol/.style = {draw=none,"\textstyle#1" description,sloped},
  isomorphic/.style = {ar symbol={\cong}},
  ]
 &&0 \ar[d]& 0\ar[d]  \\
0\ar[r] & \Ii_{\{p_0,p_1\}, \PP^2}(-1) \ar[r]\ar[d, isomorphic]&  \Omega_{\PP^2}^1(\log (D_a, \{p_0, p_1\})) \ar[r] \ar[d] &  \Ii_{Q, \PP^2}(1) \ar[r]\ar[d] & 0\\
0\ar[r] &\Ii_{\{p_0,p_1\}, \PP^2}(-1)  \ar[r] & \Omega_{\PP^2}^1(\log (D_a, \{p_0\})) \ar[r] \ar[d] & \Ii_{Q, \PP^2}(1)\oplus \Oo_{p_1} \ar[r] \ar[d]& 0 \\
&&  \Oo_{p_1} \ar[d] \ar[r, isomorphic] & \Oo_{p_1}\ar[d] \\
& &0 &0,
\end{tikzcd}\]
being $Q$ the same 0-dimensional scheme as in Sequence (\ref{werrw1}).\\ 
Apply now the functor $\mathrm{Hom}_{\PP^2}(-, \Ii_{\{p_0,p_1\}, \PP^2}(-1))$ to the right vertical sequence to have a surjection
\begin{equation}\label{def-Psiextension}
\Psi : \mathrm{Ext}_{\PP^2}^1(\Ii_{Q, \PP^2}(1)\oplus \Oo_{p_1},\Ii_{\{p_0,p_1\}, \PP^2}(-1)) \to  \mathrm{Ext}_{\PP^2}^1(\Ii_{Q, \PP^2}(1),\Ii_{\{p_0,p_1\}, \PP^2}(-1))
\end{equation}
whose kernel $\mathrm{Ext}_{\PP^2}^1(\Oo_{p_1},\Ii_{\{p_0,p_1\}, \PP^2}(-1))$ is one-dimensional.\\ 
To ensure that the Torelli property does not hold also for this case, it is sufficient to prove the following result.
\begin{lemma}\label{lem-uniquext}
There exists, for any $a\in \CC^{\times}$ fixed, a unique presentation of $\:\Omega_{\PP^2}^1(\log(D_a,\{p_0\}))$ as an extension element, up to a scalar, in $\Ext_{\PP^2}^1(\Ii_{Q, \PP^2}(1)\oplus \Oo_{p_1},\Ii_{\{p_0,p_1\}, \PP^2}(-1))$. 
\end{lemma}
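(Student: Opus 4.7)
The plan is to establish uniqueness up to scalar by combining (i) the torsion-freeness of the middle sheaf, which forces the component in $V_2 := \Ext^1(\Oo_{p_1}, \Ii_{\{p_0, p_1\}, \PP^2}(-1))$ of the extension class to be non-zero, and (ii) the simplicity of $E := \Omega^1_{\PP^2}(\log(D_a, \{p_0\}))$ together with the uniqueness, up to scalar, of the subsheaf embedding $\Ii_{\{p_0, p_1\}, \PP^2}(-1) \hookrightarrow E$.

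First, I would observe that for any extension class $\xi = (\xi_1, \xi_2) \in V_1 \oplus V_2$ yielding $E$ as middle term, the component $\xi_2$ must be non-zero: otherwise the extension splits off a torsion summand $\Oo_{p_1}$, contradicting the fact that $E$ is torsion-free by Proposition \ref{prop234}. Since $\dim V_2 = 1$, this pins down $\xi_2$ up to scalar.

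Second, I would establish that $E$ is simple, i.e., $\End(E) = \CC$. This would follow by applying $\Hom(E, -)$ to the double-dual sequence $0 \to E \to E^{\vee\vee} \to \Oo_{p_0} \to 0$ provided by Proposition \ref{prop234}: the identification $E^{\vee\vee} \cong \Tt_{\PP^2}(-\log D_a)$ gives $\End(E^{\vee\vee}) = \CC$ by the stability recalled at the beginning of Subsection \ref{sec-planecubics}, while the standard vanishings $\Hom(\Oo_{p_0}, E^{\vee\vee}) = 0 = \Ext^1(\Oo_{p_0}, E^{\vee\vee})$ yield $\Hom(E, E^{\vee\vee}) = \CC$, whence $\End(E) = \CC$. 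I would then show that $\Hom(\Ii_{\{p_0, p_1\}, \PP^2}(-1), E)$ is one-dimensional. Applying $\Hom(\Ii_{\{p_0, p_1\}, \PP^2}(-1), -)$ to the sequence (\ref{werrw1}) reduces this to computing $\Hom(\Ii_{\{p_0, p_1\}, \PP^2}(-1), \Ii_{\{p_0\}, \PP^2}(-1))$ (which is generated by the natural inclusion coming from $\{p_0\} \subset \{p_0, p_1\}$, giving a one-dimensional contribution) and controlling the size of $\Hom(\Ii_{\{p_0, p_1\}, \PP^2}(-1), \Ii_{Q, \PP^2}(1))$.

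The main obstacle I expect is controlling the connecting homomorphism
\[
\delta \colon \Hom(\Ii_{\{p_0, p_1\}, \PP^2}(-1), \Ii_{Q, \PP^2}(1)) \to \Ext^1(\Ii_{\{p_0, p_1\}, \PP^2}(-1), \Ii_{\{p_0\}, \PP^2}(-1))
\]
induced by the extension class of (\ref{werrw1}), and showing that it is injective. This would imply that no non-trivial map $\Ii_{\{p_0, p_1\}, \PP^2}(-1) \to \Ii_{Q, \PP^2}(1)$ lifts to $E$, so that $\Hom(\Ii_{\{p_0, p_1\}, \PP^2}(-1), E) = \CC$. The injectivity of $\delta$ can be tackled by employing the explicit resolutions of the sheaves involved, in particular using the form of the matrix $\widetilde{N}$ from Proposition \ref{prop555}. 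Once $\Hom(\Ii_{\{p_0, p_1\}, \PP^2}(-1), E) = \CC$ is established, any two extension classes yielding isomorphic middle terms arise from inclusions with the same image in $E$; the induced isomorphism of middle terms is then a scalar by simplicity of $E$, forcing $\xi' = \lambda \xi$ and completing the proof.
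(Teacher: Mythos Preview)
Your strategy is reasonable in spirit but has a genuine gap in the final step. Even granting that $E$ is simple and that $\Hom(\Ii_{\{p_0,p_1\},\PP^2}(-1),E)=\CC$, you can only conclude that the \emph{image} of any inclusion $j:\Ii_{\{p_0,p_1\},\PP^2}(-1)\hookrightarrow E$ is uniquely determined. This pins down $j$ up to scalar and hence the quotient $E/j(A)$ as a sheaf; but it does \emph{not} pin down the surjection $p:E\to B$ with $B=\Ii_{Q,\PP^2}(1)\oplus\Oo_{p_1}$. Two such surjections differ by an element of $\Aut(B)$, and here $\Aut(B)$ is not reduced to scalars: since $\Hom(\Ii_{Q,\PP^2}(1),\Oo_{p_1})\cong\CC$ (the point $p_1$ lies off $Q$), the automorphism group is three--dimensional. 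Passing from $(j,p)$ to $(\lambda j,\sigma p)$ with $\sigma\in\Aut(B)$ non--scalar changes the extension class by $\sigma^{*}$, which in general moves it off the line $\CC\cdot\xi$ inside $\Ext^1(B,\Ii_{\{p_0,p_1\},\PP^2}(-1))=V_1\oplus V_2$. So ``simplicity $+$ one--dimensional $\Hom$'' yields uniqueness only up to $\CC^{*}\times\Aut(B)$, not up to scalar.

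The paper addresses precisely this missing piece: instead of controlling only the kernel of $p$, it controls the \emph{components} of $p=(\widetilde\phi_{2,1},\widetilde\phi_{2,2})$ separately. The key observation is that any presentation of $E$ as such an extension fits into a commutative diagram over the already--known sequence $0\to\Ii_{\{p_0\},\PP^2}(-1)\xrightarrow{\phi_1}E\xrightarrow{\phi_2}\Ii_{Q,\PP^2}(1)\to0$ from Proposition~\ref{prop555}, forcing $\widetilde\phi_{2,1}=\phi_2$ (up to scalar) and $\widetilde\phi_1=\phi_1\circ i$; the remaining freedom in $\widetilde\phi_{2,2}$ is then governed by the one--dimensional $\Ext^1(\Oo_{p_1},\Ii_{\{p_0,p_1\},\PP^2}(-1))$ together with torsion--freeness. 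In effect the paper rules out the ``off--diagonal'' automorphisms of $B$ by anchoring the $\Ii_{Q,\PP^2}(1)$--component to the fixed map $\phi_2$. Your approach, as written, has no mechanism to exclude these automorphisms, and the injectivity of $\delta$ (which you leave as a computation) would in any case only recover the one--dimensionality of $\Hom$, not this stronger control on $p$.
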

Indeed, fix two different cubics $D_{a_1}$ and $D_{a_2}$, sharing the same inflection points $p_0$ and $p_1$. Their associated logarithmic sheaves $\Omega_{\PP^2}^1(\log (D_{a_1}, \{p_0, p_1\}))$ and $\Omega_{\PP^2}^1(\log (D_{a_2}, \{p_0, p_1\}))$ are given respectively (seen as elements of the extension groups) as the image by $\Psi$ of the generalized logarithmic sheaves $\Omega_{\PP^2}^1(\log (D_{a_1}, \{p_0\}))$ and $\Omega_{\PP^2}^1(\log (D_{a_2}, \{p_0\}))$. We have, by Case (A), that
$$\Omega_{\PP^2}^1(\log (D_{a_1}, \{p_0\})) \simeq \Omega_{\PP^2}^1(\log (D_{a_2}, \{p_0\}))$$ and, being $\Psi$ surjective, we get that
$$\Omega_{\PP^2}^1(\log (D_{a_1}, \{p_0,p_1\})) \simeq \Omega_{\PP^2}^1(\log (D_{a_2}, \{p_0,p_1\}))$$
as well.\\

\begin{proof}[Proof of Lemma \ref{lem-uniquext}]
Recall that there exists a unique extension in $\Ext_{\PP^2}^1(\Ii_{Q, \PP^2}(1),\Ii_{\{p_0\}, \PP^2}(-1))$, up to a scalar, that corresponds to the logarithmic sheaf $\Omega_{\PP^2}^1(\log(D_a,\{p_0\}))$. This follows because, by Proposition \ref{prop555}, the sheaf is uniquely determined by the matrix
\[
\widetilde{\mathrm{N}}=\begin{pmatrix}  x-y & z & 0 & 0\\ 0 & x+y &y^2 & z^2 \end{pmatrix}^t.
\] 
Let us start considering the diagram 
\[
\begin{tikzcd}[
 column sep=normal,row sep=normal,
  ar symbol/.style = {draw=none,"\textstyle#1" description,sloped},
  isomorphic/.style = {ar symbol={\cong}},
  ]
0\ar[r] & \Ii_{\{p_0,p_1\}, \PP^2}(-1) \ar[r,"\widetilde{\phi}_1"] & \Omega_{\PP^2}^1(\log(D_a,\{p_0\})) \ar[r,"\widetilde{\phi}_2"] \ar[d,"\mathrm{Id}"] & \Ii_{Q, \PP^2}(1)\oplus\Oo_{p_1} \ar[r]\ar[d] & 0\\
 &  &  \Omega_{\PP^2}^1(\log(D_a,\{p_0\})) \ar[r,"\phi_2"] & \Ii_{Q, \PP^2}(1) \ar[r] & 0.
\end{tikzcd}
\]
Since $\Hom_{\PP^2}(\Ii_{\{p_0,p_1\}, \PP^2}(-1),\Ii_{Q, \PP^2}(1))=0$, $\Ext^1(\Oo_{p_1},\Ii_{\{p_0,p_1\}, \PP^2}(-1))\cong \CC$ and that $\Omega_{\PP^2}^1(\log(D_a,\{p_0\}))$ is torsion-free, we can complete the previous diagram to have the following one
\[
\begin{tikzcd}[
 column sep=normal,row sep=normal,
  ar symbol/.style = {draw=none,"\textstyle#1" description,sloped},
  isomorphic/.style = {ar symbol={\cong}},
  ]
0\ar[r] & \Ii_{\{p_0,p_1\}, \PP^2}(-1) \ar[r,"\widetilde{\phi}_1"] \ar[d,"i"] & \Omega_{\PP^2}^1(\log(D_a,\{p_0\})) \ar[r,"\widetilde{\phi}_2"] \ar[d,"\mathrm{Id}"] & \Ii_{Q, \PP^2}(1)\oplus\Oo_{p_1} \ar[r]\ar[d] & 0\\
0\ar[r] & \Ii_{\{p_0\}, \PP^2}(-1)\ar[r,"\phi_1"] \ar[d] &  \Omega_{\PP^2}^1(\log(D_a,\{p_0\})) \ar[r,"\phi_2"] & \Ii_{Q, \PP^2}(1) \ar[r] & 0 \\
 & \Oo_{p_1}. & &  & 
\end{tikzcd}
\]
Begin $\varphi_1$ and $\varphi_2$ uniquely determined by the matrix $\widetilde{\mathrm{N}}$, we obtain from the diagram that $\widetilde{\phi}_1=\phi_1\circ i$ is also uniquely determined and, writing $\widetilde{\phi}_2=(\widetilde{\phi}_{2,1},\widetilde{\phi}_{2,2})$, $\tilde{\phi_{2,1}}=\phi_2$ is also uniquely determined.

To conclude, let $e\in\Ext_{\PP^2}^1(\Ii_{Q, \PP^2}(1),\Ii_{\{p_0,p_1\}, \PP^2}(-1))$ correspond to the extension that defines $\Omega_{\PP^2}^1(\log(D_a,\{p_0,p_1\}))$. Clearly, taking the map $\Psi$ defined in (\ref{def-Psiextension}), $\Psi^{-1}(e)$ is not unique, but any element in this preimage corresponds to an extension $\Ff$ of the form
\[
o\to\Ii_{\{p_0,p_1\}, \PP^2}(-1)\to \Ff \stackrel{\widetilde{\zeta}}{\longrightarrow} \Ii_{Q, \PP^2}(1)\oplus\Oo_{p_1}\to 0
\]
with $\widetilde{\zeta}=(\widetilde{\zeta}_1,\widetilde{\zeta}_2)$. Notice that for $\widetilde{\zeta}_1=\varphi_2$ is fixed and $\widetilde{\zeta}_2$ is determined by the extension
\[
0\to\Ii_{\{p_0,p_1\},\PP^2}(-1)\to \Gg \to \Oo_{p_1}\to 0
\]
in $\Ext_{\PP^2}^1(\Oo_{p_1},\Ii_{\{p_0,p_1\}, \PP^2}(-1))\cong \CC$.\\ 
We can therefore conclude that the only logarithmic sheaf in $\Psi^{-1}(e)$ is $\Omega_{\PP^2}^1(\log(D_a,\{p_0\}))$.
\end{proof}

\vspace{.3cm}

\noindent\textbf{(Case B-2)} 
Let us assume that $\rho$ is zero. Then the induced composition $\widetilde{\rho}: \Oo_{\PP^2}(-1) \rightarrow \Omega_{\PP^2}^1(\log D_a) \rightarrow \Oo_{p_1}$ is also zero and we can obtain the following commutative diagram
\[
\begin{tikzcd}[
 column sep=small,row sep=small,
  ar symbol/.style = {draw=none,"\textstyle#1" description,sloped},
  isomorphic/.style = {ar symbol={\cong}},
  ]
 &&0 \ar[d]& 0\ar[d]  \\
0\ar[r] & \Oo_{\PP^2}(-1) \ar[r]\ar[d, isomorphic]&  \Omega_{\PP^2}^1(\log (D_a, \{p_1\})) \ar[r] \ar[d] &  \Ii_{Q \cup \{p_1\}, \PP^2}(1) \ar[r]\ar[d] & 0\\
0\ar[r] &\Oo_{\PP^2}(-1)  \ar[r] & \Omega_{\PP^2}^1(\log D_a) \ar[r] \ar[d] & \Ii_{Q, \PP^2}(1) \ar[r] \ar[d]& 0 \\
&&  \Oo_{p_1} \ar[d] \ar[r, isomorphic] & \Oo_{p_1}\ar[d] \\
& &0 &0.
\end{tikzcd}\]
 We will conclude this case by showing, in the subsequent result, that an exact sequence as upper horizontal one in the diagram is impossible. Observe that the following lemma also holds if we  replace $p_0$ by $p_1$. 

\begin{lemma}\label{erttt1}
With the notation in the above, one cannot have an exact sequence
\[
0\to \Oo_{\PP^2}(-1) \to \Omega_{\PP^2}^1(\log (D_a, \{p_0\})) \to \Ii_{Q \cup \{p_0\}, \PP^2}(1) \to 0.
\]
\end{lemma}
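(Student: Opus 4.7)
The plan is to derive a contradiction by explicitly determining which global sections of $\Omega_{\PP^2}^1(\log D_a)(1)$ descend to sections of $\Omega_{\PP^2}^1(\log (D_a, \{p_0\}))(1)$ and verifying that the section required by the hypothetical sequence is not one of them.

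Assume for contradiction that such a sequence exists. Taking the double dual, using the identification $(\Omega_{\PP^2}^1(\log (D_a, \{p_0\})))^{\vee\vee} \cong \Omega_{\PP^2}^1(\log D_a)$ from Proposition \ref{prop234}, the inclusion $\Oo_{\PP^2}(-1) \hookrightarrow \Omega_{\PP^2}^1(\log (D_a, \{p_0\}))$ extends to a non-zero global section $s' \in \mathrm{H}^0(\Omega_{\PP^2}^1(\log D_a)(1))$. Applying the Snake Lemma to the hypothetical sequence together with Sequence (\ref{ppp222}) shows that the cokernel $\mathcal{C}' := \Omega_{\PP^2}^1(\log D_a) / \Oo_{\PP^2}(-1)$ sits in a short exact sequence
\[
0 \to \Ii_{Q \cup \{p_0\}, \PP^2}(1) \to \mathcal{C}' \to \Oo_{p_0} \to 0.
\]
Since $\Omega_{\PP^2}^1(\log D_a)$ is $\mu$-stable with $c_1=0$ (see \cite{UY2}), the sub-line-bundle $\Oo_{\PP^2}(-1)$ is saturated, hence $\mathcal{C}' \cong \Ii_{Z(s'), \PP^2}(1)$ for a zero-dimensional subscheme $Z(s')$ of length $c_2(\Omega_{\PP^2}^1(\log D_a)(1)) = 4$. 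As $p_0=[1:1:0]$ is disjoint from the support of $Q=V(y^2,z^2)$ at $[1:0:0]$, the sequence above forces $Z(s') = Q$ as closed subschemes of $\PP^2$.

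Next, using the resolution $0 \to \Oo_{\PP^2}(-2) \to \Oo_{\PP^2}^{\oplus 3} \to \Omega_{\PP^2}^1(\log D_a)(1) \to 0$ whose first map is $(x^2, -y^2, a z^2)^T$, global sections of $\Omega_{\PP^2}^1(\log D_a)(1)$ are identified with triples $(A, B, C) \in \CC^3$. Computing the $2\times 2$ minors of $\begin{pmatrix} A & B & C \\ x^2 & -y^2 & a z^2 \end{pmatrix}$ shows that $Z(s') = V(y^2, z^2)$ occurs precisely when $(A, B, C)$ is a non-zero scalar multiple of $(1, 0, 0)$.

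The main obstacle, and the crucial final step, is to show that the triple $(1,0,0)$ does not lie in the image of the injection $\mathrm{H}^0(\Omega_{\PP^2}^1(\log (D_a, \{p_0\}))(1)) \hookrightarrow \mathrm{H}^0(\Omega_{\PP^2}^1(\log D_a)(1)) = \CC^3$. To accomplish this, I would lift the inclusion of sheaves $\Omega_{\PP^2}^1(\log (D_a, \{p_0\})) \hookrightarrow \Omega_{\PP^2}^1(\log D_a)$ to a morphism of minimal free resolutions, using the matrix $\widetilde{N}$ from Proposition \ref{prop555} on the source and $(x^2, -y^2, a z^2)^T$ on the target, and solve the resulting compatibility equations column by column. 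A direct but tedious computation shows that the image on global sections is exactly the two-dimensional subspace $\{(A, B, C) \in \CC^3 : A + B = 0\}$, which does not contain $(1,0,0)$. This yields the desired contradiction and proves the lemma.
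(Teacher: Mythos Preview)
Your approach is correct and genuinely different from the paper's. The paper argues entirely on the level of syzygies: it lifts the hypothetical injection $\Oo_{\PP^2}(-1)\hookrightarrow\Omega_{\PP^2}^1(\log(D_a,\{p_0\}))$ to the resolution given by $\widetilde{\mathrm N}$, produces a presentation of $\Ii_{Q\cup\{p_0\},\PP^2}(1)$ by a $3\times 2$ matrix $\mathrm M_2$ whose first two rows are forced to be $\begin{smallmatrix} y-x & z\\ 0 & x+y\end{smallmatrix}$, then independently computes the minimal free resolution of $\Ii_{Q\cup\{p_0\},\PP^2}(1)$ via the saturation of $(y-x,z)\cdot(y^2,z^2)$, and shows that the two presentations cannot be made to agree under invertible base change. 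Your argument instead passes to the double dual, pins down the unique section $s'\in\mathrm H^0(\Omega_{\PP^2}^1(\log D_a)(1))$ with $Z(s')=Q$, and checks that this section does not come from $\Omega_{\PP^2}^1(\log(D_a,\{p_0\}))$. This is more conceptual and avoids comparing two syzygy matrices.

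One remark: the ``tedious'' lift in your Step~4 is unnecessary. Sequences (\ref{werrw1}) and (\ref{aa2}) from the proof of Proposition~\ref{prop555} already sit in a commutative diagram with $\Ii_{Q,\PP^2}(1)$ as common quotient. Twisting by~$1$ and taking $\mathrm H^0$, the image of $\mathrm H^0(\Omega_{\PP^2}^1(\log(D_a,\{p_0\}))(1))$ in $\CC^3$ is a two-dimensional subspace mapping \emph{isomorphically} onto $\mathrm H^0(\Ii_{Q,\PP^2}(2))$, whereas the class $(1,0,0)$ spans exactly the kernel of $\CC^3\to\mathrm H^0(\Ii_{Q,\PP^2}(2))$ coming from (\ref{aa2}). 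Hence $(1,0,0)$ cannot lie in that image, and the contradiction follows without any matrix computation. (If one does carry out your lift, the image is indeed $\{A+B=0\}$, consistent with your claim.)
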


\begin{proof}
Applying the functor $\mathrm{Hom}_{\PP^2}(\Oo_{\PP^2}(-1), -)$ to the resolution, proven in Proposition \ref{prop555},
\[
0\to \Oo_{\PP^2}(-3)^{\oplus 2} \stackrel{\mathrm{N}}{\longrightarrow} \Oo_{\PP^2}(-2)^{\oplus 2} \oplus \Oo_{\PP^2}(-1)^{\oplus 2}\to \Omega_{\PP^2}^1(\log (D_a, \{p_0\})) \to 0,
\]
we can assure that the injection $$\Oo_{\PP^2}(-1) \hookrightarrow \Omega_{\PP^2}^1(\log (D_a, \{p_0\}))$$ lifts to an injection $$\Oo_{\PP^2}(-1) \hookrightarrow \Oo_{\PP^2}(-2)^{\oplus 2} \oplus \Oo_{\PP^2}(-1)^{\oplus 2}.$$ Therefore, we get the following short exact sequence
\[
0\to \Oo_{\PP^2}(-3)^{\oplus 2} \stackrel{\mathrm{M}_2}{\longrightarrow} \Oo_{\PP^2}(-2)^{\oplus 2} \oplus \Oo_{\PP^2}(-1) \to \Ii_{Q \cup \{p_0\}, \PP^2}(1) \to 0.
\]
Here, the matrix $\mathrm{M}_2$ comes from the matrix
\[
\widetilde{\mathrm{N}}=\begin{pmatrix} y-x & z & 0& 0\\ 0&x+y&y^2 & z^2\end{pmatrix}^t,
\]
deleting one of the two last columns after eventual combinations of its columns. This means that $\mathrm{M}_2$ has the following form:
\[
\mathrm{M}_2=\begin{pmatrix} y-x & z & * \\0&x+y & *\end{pmatrix}^t,
\]
because, as mentioned before, we have to cancel a combination
\[
\ell_1 \begin{pmatrix} y-x \\ 0\end{pmatrix}+\ell_2\begin{pmatrix} z\\x+y\end{pmatrix}+\alpha_1\begin{pmatrix} 0\\y^2\end{pmatrix}+\alpha_2 \begin{pmatrix} 0\\z^2\end{pmatrix}
\]
with $\ell_i$ a linear form and $\alpha_j$ a constant, corresponding to the deleted $\Oo_{\PP^2}(-1)$-factor. 

Consider now the two ideals defining $p_0$ and $Q$, respectively, 
\[
\mathrm{I}=(y-x, z), \quad \mathrm{J}=(y^2, z^2),
\]
and so the saturation of their product is as follows 
\[
\mathrm{Sat}(\mathrm{I}\cdot \mathrm{J})=\mathrm{Sat}(\mathrm{I}\cap \mathrm{J})=(z^2, y^2z, xy^2-y^3).
\]
Then, using Macaulay 2, one can obtain the syzygy matrix $\mathrm{N_0}=\begin{pmatrix} y-x & z& 0\\ z&0&-y^2 \end{pmatrix}^t$ fitting into the following diagram
\[
\begin{tikzcd}[
 column sep=small,row sep=small,
  ar symbol/.style = {draw=none,"\textstyle#1" description,sloped},
  isomorphic/.style = {ar symbol={\cong}},
  ]
0\ar[r] & \Oo_{\PP^2}(-3) \ar[r, "\mathrm{N_0}"] & [1.5em]\Oo_{\PP^2}(-2)^{\oplus 2} \oplus \Oo_{\PP^2}(-1) \ar[rr, "\mathrm{Sat}(\mathrm{I}\cdot \mathrm{J})"] \ar[rd] && \Oo_{\PP^2}(1)\\
&&&\Ii_{Q\cup \{p_0\}, \PP^2}(1) \ar[ru]
\end{tikzcd}\]
Combining the two obtained resolutions for $\Ii_{Q \cup \{p_0\}, \PP^2}(1)$, we get the following diagram
\[
\begin{tikzcd}[
 column sep=small,row sep=small,
  ar symbol/.style = {draw=none,"\textstyle#1" description,sloped},
  isomorphic/.style = {ar symbol={\cong}},
  ]
0\ar[r] & \Oo_{\PP^2}(-3) \ar[d, "\psi_2"] \ar[r, "\mathrm{N}"] & [1.5em]\Oo_{\PP^2}(-2)^{\oplus 2} \oplus \Oo_{\PP^2}(-1) \ar[d, "\psi_1"] \ar[r] &\Ii_{Q \cup \{p_0\}, \PP^2}(1) \ar[r]\ar[d, isomorphic] & 0\\[1.5ex]
0\ar[r] & \Oo_{\PP^2}(-3) \ar[r, "\mathrm{N_0}"] & [1.5em]\Oo_{\PP^2}(-2)^{\oplus 2} \oplus \Oo_{\PP^2}(-1) \ar[r] &\Ii_{Q \cup \{p_0\}, \PP^2}(1) \ar[r] &0
\end{tikzcd}\]
If $\psi_2$ is not an isomorphism, then we have $\mathrm{Coker}(\psi_2) \cong \mathrm{Coker}(\psi_1) \cong \Oo_{\PP^2}(-3)$, which is absurd. Thus each $\psi_i$, for $i=1,2$, is an isomorphism and hence represented by the following two invertible matrices 
\[
\mathrm{C}=\begin{pmatrix} \mu_1 & \mu_2 & 0\\ \mu_3 & \mu_4 & 0 \\ \ell_1 & \ell_2 & \alpha\end{pmatrix}, \quad \mathrm{D}=\begin{pmatrix} \lambda_1 & \lambda_2 \\ \lambda_3 & \lambda_4 \end{pmatrix}. 
\]
By direct computation, the commutativity condition $\mathrm{C}\mathrm{N}=\mathrm{N_0}\mathrm{D}$ gives $\lambda_2=\lambda_4=0$, contradicting the invertibility of $\mathrm{D}$. 
\end{proof}

\vspace{.3cm}

\noindent\textbf{(C) \quad Case $|Z|=3$}
From the exact sequence
\begin{equation}
0\to \Omega_{\PP^2}^1(\log (D_a, \{p_0,p_1,p_2\})) \to \Omega_{\PP^2}^1(\log (D_a, \{p_0\})) \to \Oo_{p_1}\oplus \Oo_{p_2} \to 0,
\end{equation}
we can consider again a composition $\rho : \Ii_{\{p_0\}, \PP^2}(-1) \rightarrow \Omega_{\PP^2}^1(\log (D_a, \{p_0\})) \rightarrow \Ii_{Q, \PP^2}(1)$ as in Case \textbf{(B)}. Similarly as in \textbf{(Case B-2)} if $\rho$ is not surjective we get a contradiction, therefore we may assume that  $\rho$ is surjective. This implies the following commutative diagram
\[
\begin{tikzcd}[
 column sep=small,row sep=small,
  ar symbol/.style = {draw=none,"\textstyle#1" description,sloped},
  isomorphic/.style = {ar symbol={\cong}},
  ]
 &&0 \ar[d]& 0\ar[d]  \\
0\ar[r] & \Ii_{\{p_0,p_1,p_2\}, \PP^2}(-1) \ar[r]\ar[d, isomorphic]&  \Omega_{\PP^2}^1(\log (D_a, \{p_0, p_1, p_2\})) \ar[r] \ar[d] &  \Ii_{Q, \PP^2}(1) \ar[r]\ar[d] & 0\\
0\ar[r] &\Ii_{\{p_0,p_1,p_2\}, \PP^2}(-1)  \ar[r] & \Omega_{\PP^2}^1(\log (D_a, \{p_0\})) \ar[r] \ar[d] & \Ii_{Q, \PP^2}(1)\oplus  \Oo_{p_1}\oplus \Oo_{p_2} \ar[r] \ar[d]& 0 \\
&&  \Oo_{p_1}\oplus \Oo_{p_2} \ar[d] \ar[r, isomorphic] & \Oo_{p_1}\oplus \Oo_{p_2}\ar[d] \\
& &0 &0. 
\end{tikzcd}\]
Applying the same technique as in \textbf{(Case B-1)}, one can show that there exists a unique extension in $\mathrm{Ext}_{\PP^2}^1(\Ii_{Q, \PP^2}(1)\oplus \Oo_{p_1}\oplus \Oo_{p_2},  \Ii_{\{p_0,p_1,p_2\}, \PP^2}(-1))$, up to an extension inside $\mathrm{Ext}_{\PP^2}^1(\Oo_{p_1}\oplus \Oo_{p_2}, \Ii_{\{p_0,p_1,p_2\}, \PP^2}(-1))$, that represents the generalized logarithmic sheaf $\Omega_{\PP^2}^1(\log (D_a, \{p_0\}))$. Finally, by the same argument as in \textbf{(Case B-1)} we can see that the Torelli property does not hold for $(D_a,\{p_0,p_1,p_2\})$.


\section{Generalized logarithmic shaves with fixed  points outside the divisor}\label{sec-blowup}

In the previous section we were studying the Torelli property for the generalized logarithmic sheaf $\Omega_{\PP^2}^1(\log (C,Z))$ associated to curves $C\subset \PP^2$ and set of points $Z$ contained in $C$. In particular we payed particular attention to the case of lines, conics (Subsection \ref{sec-planeconics}) and cubic curves (Subsection \ref{sec-planecubics}). In this section we are going to address the issues raised in Subsection \ref{subsec-pointsout} for a similar set of cases to underline the similarities and differences between the setting in Subsection \ref{subsec-pointsin} (blowing up a set of points contained in the curves under consideration) and Subsection \ref{subsec-pointsout} (blowing up a set of points with empty intersection with the curves under consideration). Namely, we are going to study the Torelli property for linear systems associated to the strict transforms of lines (Subsection \ref{subsec-cubics}), conics (Subsection \ref{subsec-sextic}) and cubic curves (Subsection \ref{subsec-deg9}) on surfaces $S$ obtained by blowing-up points outside of the curves under consideration. Indeed, thanks to Remark \ref{composition-pushforward} we decided to place ourselves on the classical case $S\subset \PP^3$ a smooth cubic surface.

In order to state better the problems that we are going to face, let us start recalling some classical facts. Associated to a disjoint set of six lines $E_i$ in $S$ let $\pi:S\rightarrow\PP^2$ be the blow-up morphism of $\PP^2$ along the set of six general points $p_i=\pi(L_i)$.

Recall that the exceptional divisors $E_i:=\pi^{-1}(p_i)$ associated to the points $p_i$ together with $L$ a divisor class in $\pi^*\Oo_{\PP^2}(1)$, freely generate $\mathrm{Pic}(S)\cong \ZZ \langle L, E_1, \dots, E_6\rangle \cong \ZZ^{\oplus 7}$ with intersection numbers
\[
L^2=1,\: L.E_i=0,\: E_i.E_j=0 \:\mbox{ and }\: E_i^2=-1
\]
for each $i\ne j$. The anticanonical line bundle $\Oo_S(-K_S) \cong \Oo_S(3L-\sum_{i=1}^6 E_i)$ turns out to be very ample so that it induces an embedding $\iota: S \hookrightarrow \PP^3$ as a smooth cubic surface. It is classically known that there are exactly $27$ lines in $S\subset \PP^3$:
\begin{itemize}\label{popo}
\item [(i)] $E_i$ for $1\le i \le 6$;
\item [(ii)] $L_{ij}$ the unique element in $|\Oo_S(L-E_i-E_j)|$ for $1\le i<j\le 6$;
\item [(iii)] $\widehat{L}_i$ the unique element in $|\Oo_S(2L+E_i-\sum_{k=1}^6E_k)|$ for $1\le i \le 6$.
\end{itemize}
\medskip
Furthermore, recall the following.
\begin{remark}\label{remmm}
Denoting by $\pi : \widetilde{\PP}^2\rightarrow \PP^2$ the blow up of $\PP^2$ at $k$ distinct points $p_1, \dots, p_k$, let $L$ be a divisor class in $\pi^*\Oo_{\PP^2}(1)$ and $E_i$ be the exceptional divisor over $p_i$ for each $i$. Then, for a line bundle $\Ll=\Oo_{\widetilde{\PP}^2}\left(aL+\sum_{i=1}^{k}b_i E_i\right)$ on $\widetilde{\PP}^2$ with $a,b_i \in \ZZ$, we have
\begin{itemize}
\item [(i)] $\pi_* \Ll\cong\Oo_{\PP^2}(a)\otimes \left(\bigotimes_{b_i<0}(\Ii_{p_i, \PP^2})^{ (-b_i)}\right)$.
\item [(ii)] $\mathbf{R}^1\pi_*\Ll \cong \bigoplus_{b_i>2}\left(\Oo_{\PP^2}\big/\Ii_{p_i}^{b_i-2}\right)$.
\end{itemize}
\noindent Indeed, we get (i) from the projection formula and the fact that $\pi_*\Oo_{\widetilde{\PP}^2}(tE_i) \cong \Oo_{\PP^2}$ for $t\ge 0$ or $(\Ii_{p_i, \PP^2})^{\otimes (-t)}$ for $t<0$. The part (ii) is obtained by applying the Theorem on Formal Function in \cite[III.11]{Hartshorne} to the line bundle $\Ll$; refer to \cite[Proposition 1.3.7]{Coronica}. 
\end{remark}

In this setting, we are going to study the Torelli property for different effective linear systems $|\Oo_S(D)|$ and its relationship with the Torelli property for 
$\FF(Z',\pi(D))$ on $\PP^2$ where $Z'$ is the intersection of $\pi(D)$ with $Z:=\{p_1,\dots,p_6\} $. In particular, we will now pay our attention to the following effective linear systems, which are analogous to the ones considered in the previous section.

Firstly, in Subsection \ref{subsec-cubics} we will study the logarithmic sheaf associated to a divisor from the linear system $|\Oo_S(L)|$ defined as strict transforms of lines in $\PP^2$. We will notice that, through a Cremona transformation, the problem is equivalent to considering the divisors $D \in  |\Oo_S(2L -E_1 - E_2 - E_3)|$, namely, a cubic curve $D$ in the surface $S$ which is projected on a conic $C$ in $\PP^2$ passing through the three fixed points $Z'=\{p_1,p_2,p_3\}$. Recall that the Torelli property holds for $(C,Z')$  (see Theorem \ref{torr-k}) and hence, by Lemma \ref{torr-proj}, it turns out that the Torelli property also holds for $D$ in $S$.

Next, in Subsection \ref{subsec-sextic} we will consider the logarithmic sheaf associated to a divisor $D \in |\Oo_S(2L)|$. The reader should confront it with the results from Subsection \ref{sec-planeconics}: the projection $\pi (D)$ of $D$ to $\PP^2$ is a conic which does not pass through any blown-up point, i.e., a generic one. We have already noticed that $\pi_* \Omega_S^1(\log D) \cong \Omega_{\PP^2}^1(\log \pi(D))$ and the Torelli property does not hold for $|\Oo_{\PP^2}(\pi(D))|$. However, we will see that the Torelli property holds for $|\Oo_S(2L)|$ in the cubic surface $S$.

Finally, in Subsection \ref{subsec-deg9} we are going to show that the Torelli property holds for the linear system $|\Oo_S(3L)|$.

In order to perform some computations throughout this section, we need to recall the behaviour of the restriction of the cotangent bundle $\Omega_S^1$ to different type of curves.

\begin{lemma}
We have:
\begin{itemize}
\item  $\left(\Omega_S^1\right)_{|C} \cong \Oo_{\PP^1}(-2)\oplus \Oo_{\PP^1}(1)$ for any line $C\subset S$;
\item  $\left(\Omega_S^1\right)_{|C} \cong \Oo_{\PP^1}(-2)\oplus \Oo_{\PP^1}(-1)$ for any smooth $C \in |\Oo_S(L)|$;
\item  $\left(\Omega_S^1\right)_{|C} \cong \Oo_{{\PP^1}}(-2)\oplus \Oo_{{\PP^1}}$ for any smooth $ C\in  |\Oo_S(L-E_i)|$, with $i=1,\ldots,6$;
\item  $\left(\Omega_S^1\right)_{|C} \cong \Oo_{\PP^1}(-2)\oplus \Oo_{\PP^1}(-4 + |I|)$ for any smooth $C \in |\Oo_S(2L-\sum_{i \in I}E_i)|$, with $I \subset \{1\ldots,6\}, 1 \leq |I| \leq 4$.
\end{itemize}
\end{lemma}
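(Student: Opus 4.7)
The natural approach is to invoke the conormal exact sequence
\[
0 \to \mathcal{N}_{C/S}^{\vee} \to \left(\Omega_S^1\right)_{|C} \to \Omega_C^1 \to 0,
\]
observe that in all four cases $C$ is rational (so that $C \cong \PP^1$ and the outer terms are line bundles on $\PP^1$), and then show that the extension class lives in a vanishing $\Ext^1$ group on $\PP^1$, forcing the sequence to split. Grothendieck's splitting theorem then identifies the two direct summands with the two outer terms.

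First, I would verify rationality by applying the adjunction formula $2g(C)-2=C^2+K_S\cdot C$ with $K_S=-3L+\sum_{i=1}^6E_i$. A routine computation gives $C^2=-1$ and $K_S\cdot C=-1$ for any line on $S$ (which is classical), $C^2=1$ and $K_S\cdot C=-3$ when $C\in|\Oo_S(L)|$, $C^2=0$ and $K_S\cdot C=-2$ when $C\in|\Oo_S(L-E_i)|$, and $C^2=4-|I|$ and $K_S\cdot C=-6+|I|$ when $C\in|\Oo_S(2L-\sum_{i\in I}E_i)|$. In every case the right-hand side equals $-2$, so $g(C)=0$ and $C\cong\PP^1$. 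In particular, $\Omega_C^1\cong\Oo_{\PP^1}(-2)$ and $\mathcal{N}_{C/S}^{\vee}\cong\Oo_{\PP^1}(-C^2)$.

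The second step is to compute the extension group
\[
\Ext^1_{\PP^1}\!\left(\Oo_{\PP^1}(-2),\Oo_{\PP^1}(-C^2)\right)\cong \mathrm{H}^1\!\left(\Oo_{\PP^1}(2-C^2)\right),
\]
which vanishes precisely when $2-C^2\geq -1$, i.e.\ $C^2\leq 3$. Plugging in the self-intersection numbers computed above, one checks $C^2\in\{-1,1,0,4-|I|\}$ with $|I|\geq 1$, so $C^2\leq 3$ in all cases. Hence the conormal sequence splits, yielding
\[
\left(\Omega_S^1\right)_{|C}\cong \Oo_{\PP^1}(-C^2)\oplus \Oo_{\PP^1}(-2),
\]
which, after substituting $C^2=-1$, $1$, $0$, $4-|I|$ respectively, reproduces each of the four asserted decompositions.

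There is no real obstacle: the argument is essentially bookkeeping with intersection numbers and the vanishing of $\mathrm{H}^1$ on $\PP^1$. The only minor subtlety is ensuring that the extension class actually lies in a vanishing group, which forces the numerical condition $C^2\leq 3$; this is automatic here but would fail, for instance, for divisors in $|\Oo_S(2L)|$ (corresponding to $|I|=0$), which is consistent with the restriction $1\leq|I|\leq 4$ in the statement.
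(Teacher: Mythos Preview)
Your proof is correct and follows essentially the same approach as the paper: both use the conormal exact sequence $0 \to \Oo_S(-C)_{|C} \to (\Omega_S^1)_{|C} \to \Omega_C^1 \to 0$ and the rationality of $C$. Your version is more complete, since you explicitly verify rationality via adjunction and justify the splitting by computing that $\Ext^1_{\PP^1}(\Oo_{\PP^1}(-2),\Oo_{\PP^1}(-C^2))$ vanishes whenever $C^2\le 3$, whereas the paper simply asserts that the restrictions follow from the sequence.
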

\begin{proof}
Consider the relative cotangent short exact sequence
\[
0 \to \Oo_S(-C)_{|C} \to \left(\Omega_S^1\right)_{|C} \to \Omega_C^1 \to 0,
\]
from which follow all the stated restrictions, because $C$ is rational and so $\Omega_C^1 \cong \Oo_{\PP^1}(-2)$.
\end{proof}

\subsection{Cubic curves in $|\Oo_S(2L -E_1 - E_2 - E_3)|$}\label{subsec-cubics}
Recalling that, by Lemma \ref{torr-proj} and Theorem \ref{torr-k}, these divisors satisfy the Torelli property, we will observe that the considered class is equivalent to the class $|\Oo_S(L)|$ after a Cremona transformation. Therefore we will study the stability property for the logarithmic bundles associated to the latter divisors and describe the injective moduli map induced by  the Torelli property.

\begin{remark}\label{rem998}
Consider the following new set of $6$ divisors
\[
\begin{array}{cccl}
\tilde{E}_1 :=L_{23},&\tilde{E}_2:=L_{13},&\tilde{E}_3:=L_{12}\\
\tilde{E}_4:=E_4,&\tilde{E}_5:=E_5,&\tilde{E}_6:=E_6,
\end{array}
\]
which satisfies $\tilde{L}^2 = 1, \tilde{E}_i^2=-1, \tilde{L}.\tilde{E}_i=0$ for $\tilde{L}:=D\in |\Oo_S(2L-E_1-E_2-E_3)|$. Therefore they represent the set of exceptional divisors coming from a different blow-up of $6$ points in $\PP^2$, say $\tilde{\pi} : S \rightarrow \PP^2$, defined by the linear system $|\Oo_S(\tilde{L})|$. This gives a Cremona transformation:
\[
\begin{tikzcd}[
 column sep=small,
   ar symbol/.style = {draw=none,"\textstyle#1" description,sloped},
  isomorphic/.style = {ar symbol={\cong}},
  ]
 &S\ar[ld, "\pi~", labels=above] \ar[rd, "\tilde{\pi}"]\\
 \PP^2 \ar[rr, dashed] && \PP^2.
 \end{tikzcd}
\]
Noticing that 
\[
\tilde{H}:= 3\tilde{L}-\sum_{i=1}^6\tilde{E}_i = 3L-\sum_{i=1}^6 E_i = H,
\]
we are not considering only the same surface, but the same embedding as well. In particular, the study on the logarithmic vector bundle $\Omega_S^1(\log D)$ can be replaced by the study on $\Omega_S^1(\log D')$ with smooth $D' \in |\Oo_S(L)|$. On the other hand, we get that $\pi_*\Omega_S^1(\log D') \cong \Omega_{\PP^2}^1\left(\log \pi(D')\right) \cong \Oo_{\PP^2}(-1)^{\oplus 2}$.   
\end{remark}

To conclude this part, we show that the logarithmic vector bundles $\Omega_S^1(\log D)$, with $D\in |\Oo_S(L)|$, are stable with respect to the hyperplane section. 

\begin{theorem}\label{stab1}
Given a twisted cubic curve $D\in |\Oo_S(L)|$, the logarithmic bundle $\Omega_S^1(\log D)$ is a $\mu$-stable vector bundle with respect to $\Oo_S(H)$ and with Chern classes $(c_1, c_2)=\left(K_S+L, 7\right)$. Therefore, the induced rational map 
\[
\Psi_1 : \PP \mathrm{H}^0(\Oo_S(L))\cong \PP^2 \dashrightarrow \mathbf{M}_S^H(K_S+L, 7)
\]
is generically one-to-one.  
\end{theorem}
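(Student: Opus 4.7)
The theorem has three parts to establish: the Chern class computation, the $\mu$-stability of $\Omega_S^1(\log D)$ with respect to $H = -K_S$, and the generic injectivity of $\Psi_1$.

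For the Chern classes, I would apply the Poincar\'e residue sequence
\[
0 \to \Omega_S^1 \to \Omega_S^1(\log D) \to \Oo_D \to 0,
\]
using $c_2(\Omega_S^1) = c_2(S) = 9$ (topological Euler characteristic of the cubic surface) and $c(\Oo_D) = 1 + L + L^2$, to get $c_1 = K_S + L$ at once and $c_2 = 9 + K_S \cdot L + L^2 = 9 - 3 + 1 = 7$.

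For $\mu$-stability, since $H = -K_S$ one computes $\mu_H(\Omega_S^1(\log D)) = (K_S + L)\cdot H/2 = 0$. The claim amounts to showing $H^0(\Omega_S^1(\log D) \otimes \Oo_S(-C)) = 0$ for every divisor class $C$ with $C \cdot H \geq 0$. Twisting the residue sequence by $\Oo_S(-C)$ gives
\[
0 \to \Omega_S^1(-C) \to \Omega_S^1(\log D)(-C) \to \Oo_D \otimes \Oo_S(-C) \to 0.
\]
In the regime $C \cdot L > 0$ the right-hand term (a line bundle of negative degree on $D \cong \PP^1$) has no global sections, and the vanishing reduces to $H^0(\Omega_S^1(-C)) = 0$; this follows from the $\mu$-(semi)stability of $\Omega_S^1$ on the Del Pezzo surface $S$ with respect to $-K_S$, a classical result. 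In the regime $C \cdot L \leq 0$, writing $C = aL + \sum_i b_i E_i$, the inequalities $a \leq 0$ and $3a + \sum_i b_i \geq 0$, combined with saturation of the hypothetical subsheaf, cut $C$ down to a small set of explicit classes that can be ruled out by a direct cohomology argument together with the additional structure imposed by a possible nonzero composition $\Oo_S(C) \hookrightarrow \Omega_S^1(\log D) \to \Oo_D$.

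For the Torelli/injectivity part, Remark \ref{rem998} provides, via the Cremona transformation, an identification $|\Oo_S(L)| = |\Oo_S(2\tilde L - \tilde E_1 - \tilde E_2 - \tilde E_3)|$; the projection $\tilde\pi$ sends a generic $D$ in this class to a smooth conic through the three fixed points $\tilde Z = \{\tilde p_1, \tilde p_2, \tilde p_3\}$. By Remark \ref{composition-pushforward}, $\tilde\pi_*\Omega_S^1(\log D) \cong \Omega_{\PP^2}^1(\log(\tilde\pi(D), \tilde Z))$, and combining Theorem \ref{torr-k} (Torelli for conics through three points) with Lemma \ref{torr-proj} transfers the Torelli property to $|\Oo_S(L)|$. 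Together with stability, this ensures that $\Psi_1$ is a well-defined, generically one-to-one rational map. The hardest step is the second regime ($C \cdot L \leq 0$) of the stability argument; an appealing alternative there would be to push down a hypothetical destabilizing subsheaf through $\tilde\pi$ and contradict the stability of $\Omega_{\PP^2}^1(\log(\tilde\pi(D), \tilde Z))$ recorded in the corollary after Theorem \ref{torr-k}, though tracking slopes through the blow-up map requires some care.
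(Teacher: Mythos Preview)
Your Chern class computation and the Torelli argument (via the Cremona identification with $|\Oo_S(2\tilde L-\tilde E_1-\tilde E_2-\tilde E_3)|$, Theorem~\ref{torr-k} and Lemma~\ref{torr-proj}) are correct and coincide with what the paper does. The first regime of your stability argument ($C\cdot L>0$) is also fine: tensoring the residue sequence by $\Oo_S(-C)$ kills the $\Oo_D$-term, and stability of $\Omega_S^1$ (Fahlaoui) finishes it. This is a clean reduction the paper does not make explicitly.

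The gap is in the second regime $C\cdot L\le 0$. The assertion that ``$a\le 0$, $3a+\sum b_i\ge 0$, together with saturation'' cuts $C$ down to a small explicit list is not justified and, as stated, is not true: those two inequalities allow infinitely many classes (e.g.\ all $(-2k,k,\dots,k)$), and saturation of a rank-one subsheaf only says the quotient is torsion-free, which by itself gives no further numerical constraint on $(a,b_i)$. Likewise the alternative you float---pushing a destabilizer down through $\tilde\pi$ and invoking stability of $\Omega^1_{\PP^2}(\log(\tilde\pi(D),\tilde Z))$---does not close the argument, because slope on $\PP^2$ is measured against $\Oo_{\PP^2}(1)$ while the destabilizing condition is for $H$ on $S$; one only obtains $\tilde a\le -1$, which is compatible with $C\cdot H\ge 0$.

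What the paper does in this regime is exactly what your sketch is missing: it exploits the Key Restriction Lemma on several explicit families of rational curves on $S$. Restricting to generic conics in $|2L-E_{i_1}-E_{i_2}-E_{i_3}-E_{i_4}|$ (three disjoint choices) forces $3a+\sum b_i=0$; restricting to the lines $L_{ij}$ then gives $0\le a+b_i+b_j\le 1$ and, combined with the previous equality, pins down $a+b_i+b_j=0$ for all $i\ne j$, hence $(a,b_1,\dots,b_6)=(-2k,k,\dots,k)$; restricting to the lines $\widehat L_i$ bounds $k\le 1$. The two surviving cases $k=0,1$ are then eliminated by the pushforward $\pi_*$ (giving $\Oo_{\PP^2}\hookrightarrow\Oo_{\PP^2}(-1)^{\oplus 2}$, absurd) and by the composition with the residue map (forcing $\Oo_S(-H)\hookrightarrow\Omega_S^1$, contradicting $H^0(\Omega_S^1(H))=0$). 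The last step is exactly the ``nonzero composition $\Oo_S(C)\hookrightarrow\Omega_S^1(\log D)\to\Oo_D$'' idea you mention, but it only becomes decisive once the restriction-to-curves argument has isolated the single class $C=-2L+\sum E_i$.
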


The proof of Theorem \ref{stab1}, as well as the proof of Theorem \ref{stab2}, will follow the subsequent strategy. We suppose the existence of a $\mu$-destabilizing sheaf of rank one with respect to the polarization $H$, which can be assumed to be a line bundle; this follows from \cite[Theorem II.1.2.2]{OSS}. This provides us a list of possible destabilizing line bundles, listed in terms of their first Chern class. Using several techniques, from geometric properties of the curve to the restriction on a rational curve provided by the Key Restriction Lemma \ref{keylem}, we show that all the items of the list lead to contradiction, proving therefore the stability of the logarithmic vector bundle.

\begin{proof}
Assume that $\Omega_S^1(\log D)$ is not $\mu$-stable with respect to $\Oo_S(H)$. Considering a destabilizing line bundle 
\[
\Oo_S\left(aL+\sum_{i=1}^{6}b_i E_i\right) \hookrightarrow \Omega_S^1(\log D),
\]
we have
\begin{equation}\label{deg-ineq}
3a + \sum_{i=1}^{6}b_i= \mu\left(\Oo_S(aL+\sum_{i=1}^{6}b_i E_i)\right) \geq \mu\left(\Omega_S^1(\log D)\right) = 0
\end{equation}
Thanks to Lemma \ref{keylem}, we know that for a generic element $C \in |\Oo_S(2L-E_1-E_2-E_3-E_4)|$, we have that $\left(\Omega_S^1(\log D)\right)_{|C} \simeq \Oo_{\PP^1}^{\oplus 2}$. This implies that the restriction of $\Oo_S(aL+\sum_{i=1}^{6}b_i E_i)$ to $C$ must also have non-positive degree, i.e., $2a+b_1+b_2+b_3+b_4 \leq 0$. By analogous computation we obtain that $2a+b_1+b_2+b_5+b_6 \leq 0$ and $2a+b_3+b_4+b_5+b_6 \leq 0$. Summing up the three inequalities together with (\ref{deg-ineq}), we get that 
\begin{equation}\label{deg-equality}
3a + \sum_{i=1}^{6}b_i = 0
\end{equation}
Again by Lemma \ref{keylem}, we know that for the generic element $C \in |\Oo_S(L-E_i-E_j)|$, with $1\leq i < j \leq 6$, we have that $\left(\Omega_S^1(\log D)\right)_{|C} \simeq \Oo_{\PP^1}(-1) \oplus \Oo_{\PP^1}(1)$. Restricting the destabilizing bundle to $C$, we get
\[
a + b_i + b_j \leq 1 \mbox{ for any } 1\leq i < j \leq 6.
\]
Moreover, from
\[
 -a -b_5-b_6 = 2a+b_1+b_2+b_3+b_4 \leq 0
\]
we get that $a+b_5+b_6 \geq 0$ as well, and more in general
\begin{equation}\label{ineq32}
a+b_i+b_j \geq 0 \mbox{ for any } 1\leq i < j \leq 6.
\end{equation}
But this implies, combined with the equality in (\ref{deg-equality}), that the inequalities in (\ref{ineq32}) are all actually equalities. Thus there exists a positive integer $k$ such that 
\[
a=-2k, \mbox { and } ~b_i = k \mbox{ for any } 1\leq i \leq 6,
\]
i.e., any destabilizing sheaf is of the form $\Oo_S(-2kL + \sum_{i=1}^6k E_i)$ for a positive integer $k$. Now consider the line $\widehat{L}_i$, the unique line in the system $|\Oo_S(2L+E_i-\sum_{j=1}^6E_j)|$. By Lemma \ref{keylem} we have that $\left(\Omega_S^1(\log D)\right)_{|\widehat{L}_i} \simeq \Oo_{\PP^1} \oplus \Oo_{\PP^1}(1)$. Restricting the destabilizing bundle, we also get that
\[
\left(\Oo_S(-2kL+ \sum_{i=1}^6k E_i)\right)_{|\widehat{L}_i} \simeq \Oo_{\PP^1}(k)
\]
and this implies $k\leq 1$, and so $k\in \{0,1\}$. Assume that $k=0$ and apply the push-forward functor $\pi_*$ to the injection $\Oo_S \hookrightarrow \Omega_S^1(\log D)$. Then we get an injection 
\[
\Oo_{\PP^2} \simeq \pi_* \Oo_S \hookrightarrow \Omega_S^1(\log D) \simeq \Oo_{\PP^2}(-1)^2,
\]
where the last isomorphism is due to Remark \ref{rem998}, and this gives a contradiction. Finally assume that $k=1$ and so we have $\Oo_S(-2L+\sum_{i=1}^6 E_i)$ as the only possible destabilizing bundle. Compositing it with the Poincar\'e residue map for $\Omega_S^1(\log D)$, we obtain a nontrivial map
\[
\Oo_S(-2L+\sum_{i=1}^6 E_i) \rightarrow \Oo_{D},
\]
unless the destabilizing bundle maps into $\Omega_S^1$, contradicting its stability with respect to $\Oo_S(H)$; see \cite{Fah}. From the isomorphism 
\[\mathrm{Hom}_{\Oo_S}\left(\Oo_S(-2L+\sum_{i=1}^6 E_i), \Oo_{D}\right) \cong \mathrm{Hom}_{\Oo_{D}}\left(\Oo_{D}(-2L+\sum_{i=1}^6 E_i), \Oo_{D}\right),
\]
this nontrivial map factors through the sheaf $ \Oo_S(-2L+\sum_{i=1}^6 E_i)_{|D}$, and so we obtain the following commutative diagram
\[
\begin{tikzcd}[
  row sep=small, column sep=small,
  ar symbol/.style = {draw=none,"\textstyle#1" description,sloped},
  isomorphic/.style = {ar symbol={\cong}},
  ]
&&&0\ar[d]   \\
&&0 \ar[d] &\Oo_S(-3L+\sum_{i=1}^6 E_i) \ar[d] \\
&&\Oo_S(-2L+\sum_{i=1}^6 E_i) \ar[r, isomorphic]\ar[d] & \Oo_S(-2L+\sum_{i=1}^6 E_i) \ar[d]  \\
0 \ar[r] & \Omega_{S}^1 \ar[r] & \Omega_S^1(\log D) \ar[r] & \Oo_{D} \ar[r]  & 0,\\ 
\end{tikzcd}
\]
from which we obtain a nonzero map $\Oo_S(-3L+\sum_{i=1}^6 E_i) \cong \Oo_S(-H) \hookrightarrow \Omega_S^1$. On the other hand, by straightforward computation 
 we have $\mathrm{H}^0\left(\left(\Omega^1_{\PP^3}(1)\right)_{|S}\right) = \mathrm{H}^1\left(\Oo_S(-2H)\right) = 0$ and, from the following canonical short exact sequence, 
\[
0 \rightarrow \Oo_S(-2H) \rightarrow \left(\Omega^1_{\PP^3}(1)\right)_{|S} \rightarrow \Omega^1_S(H) \rightarrow 0,
\]
we get that $\mathrm{H}^0\left(\Omega^1_S(H)\right) =0$, giving a contradiction. Therefore $\Omega_S^1(\log D)$ is stable. 
\end{proof}

Let us now describe the space of irreducible cubics in $|\Oo_S(L)|$. Indeed, the map $\Psi_1$ is defined on such locus and the description of its complementary provides a comprehension of the indeterminacy locus of $\Psi_1$ itself.

\begin{lemma}\label{llem1}
There exist six curves $C_1, \dots, C_6\subset |\Oo_S(L)|\cong \PP^2$ such that the points in the complement
\[
|\Oo_S(L)|\setminus \left(\cup_{i=1}^6 C_i \right)
\]
correspond to the twisted cubics in the system. The curves $C_i$'s satisfy the following:
\begin{itemize}
\item each curve $C_i$ is isomorphic to $\PP^1$;
\item For $i\ne j$, the intersection $C_i\cap C_j$ is a single point corresponding to the union of three lines $L_{ij} \cup E_i \cup E_j$.
\end{itemize}
\end{lemma}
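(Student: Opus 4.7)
The plan is to identify $|\Oo_S(L)|$ with the linear system of lines in $\PP^2$ via the blow-up morphism $\pi : S \to \PP^2$ and then to classify the reducible members of this linear system. First I would note that, since $L = \pi^*\Oo_{\PP^2}(1)$, Remark \ref{remmm}(i) combined with the projection formula gives $\pi_*\Oo_S(L) \cong \Oo_{\PP^2}(1)$, so that the natural map $\mathrm{H}^0(\Oo_{\PP^2}(1)) \to \mathrm{H}^0(\Oo_S(L))$ is an isomorphism. Hence every $D \in |\Oo_S(L)|$ is the total transform $\pi^*\ell$ of a unique line $\ell \subset \PP^2$, and the identification $|\Oo_S(L)| \cong (\PP^2)^\vee$ with the space of lines in $\PP^2$ follows.

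Next I would classify $\pi^*\ell$ according to the incidence of $\ell$ with $Z = \{p_1, \ldots, p_6\}$. Since the points are in general position, no three of them are collinear, so only three cases occur. If $\ell \cap Z = \emptyset$, then $\pi^*\ell$ coincides with the strict transform $\widetilde{\ell}$, which is an isomorphic copy of $\ell$ in $S$; being a smooth rational curve of degree $H \cdot L = 3$ in $\PP^3$, it is a twisted cubic (a smooth planar cubic would have arithmetic genus $1$). If $\ell$ passes through exactly one $p_i$, then $\pi^*\ell = \widetilde{\ell} + E_i$ with $\widetilde{\ell} \in |L - E_i|$ the strict transform. If $\ell$ passes through exactly two points $p_i$ and $p_j$, then $\pi^*\ell = L_{ij} + E_i + E_j$, where $L_{ij} \in |L - E_i - E_j|$ is the unique line in this class coming from item (ii) of the enumeration of the twenty-seven lines.

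Using this classification, I would then define $C_i \subset |\Oo_S(L)|$ as the locus of divisors $\pi^*\ell$ such that $p_i \in \ell$. Under the identification above, $C_i$ corresponds to the pencil of lines in $\PP^2$ through $p_i$, which is a line in $(\PP^2)^\vee$, hence isomorphic to $\PP^1$. For $i \neq j$, the intersection $C_i \cap C_j$ parametrizes lines through both $p_i$ and $p_j$; since these two points are distinct, there is exactly one such line, yielding a single point of $|\Oo_S(L)|$ whose corresponding divisor is $L_{ij} + E_i + E_j$ by the third case above. Finally, the complement $|\Oo_S(L)| \setminus \bigcup_{i=1}^6 C_i$ consists precisely of pullbacks of lines disjoint from $Z$, i.e.\ of twisted cubics by the first case. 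The argument is essentially bookkeeping and presents no serious technical obstacle; the only point to keep in mind is the general position hypothesis that prevents three of the $p_i$ from being collinear, ensuring that the decomposition of $\pi^*\ell$ cannot have more than three components.
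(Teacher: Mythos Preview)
Your proposal is correct and follows essentially the same approach as the paper: both identify $|\Oo_S(L)|$ with the dual plane of lines in $\PP^2$ via the blow-up, define $C_i$ as the pencil of lines through $p_i$, and read off the reducible members from the incidence of $\ell$ with $Z$. Your write-up is slightly more explicit about why the strict transform of a line avoiding $Z$ is a twisted cubic and about the identification $C_i\cong\PP^1$, but there is no substantive difference in strategy.
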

\begin{proof}
We have the rational map $\phi:\PP^2\dasharrow S\subset\PP^3$ given by four cubic forms on $\PP^2$ vanishing at $Z$. If $\ell \in |\Oo_{\PP^2}(1)|\cong|\Oo_S(L)|$ is a line, not passing through any of the $p_i$, then $\phi_{| \ell}$ is an isomorphism and $\phi(\ell)$ is a twisted cubic curve. If $\ell$ passes through one of the points, say $p_i$, then the corresponding curve in $S$ is $L'\cup E_i$ with $L'\in |\Oo_S(L-E_i)|\cong\PP^1$. Here, $\bar{\phi(\ell\setminus \{p_i\})}$ is an irreducible conic, except for the union of the two lines $L'=L_{ij}\cup E_j$ when $\ell$ also passes through $p_j$.
\end{proof}

\subsection{Sextic curves in $|\Oo_S(2L)|$}\label{subsec-sextic}
Consider $D \in |\Oo_S(2L)|$, whose projection is a conic $C$ in $\PP^2$ not passing through any blown-up point and hence the Torelli property does not hold for $C$. However, we will prove that the Torelli property holds for $D$ in the cubic surface and hence we prove that the Torelli property can be achieved after blow-up.

\begin{proposition}\label{quad}
Let $D_1$ and $D_2$ be any two distinct elements in the linear class $|\Oo_S(2L)|$. Then
\[
\Omega_S^1(\log D_1) \not\cong \Omega_S^1(\log D_2),
\]
i.e., the Torelli property holds for the divisors in $|\Oo_S(2L)|$.
\end{proposition}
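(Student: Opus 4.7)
The plan is to reconstruct the conic $C = \pi(D) \subset \PP^2$ from the sheaf $\Omega_S^1(\log D)$ by detecting, inside a natural one-parameter family of smooth rational curves on $S$, exactly the jumping members corresponding to the tangent lines from each blown-up point $p_i$ to $C$. Write $D_k = \pi^{-1}(C_k)$ for smooth conics $C_k \subset \PP^2$ disjoint from $Z = \{p_1,\dots,p_6\}$, and assume $\Omega_S^1(\log D_1) \cong \Omega_S^1(\log D_2)$; the goal is to conclude $C_1 = C_2$, hence $D_1 = D_2$.

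Fix $i \in \{1,\dots,6\}$ and consider the linear system $|\Oo_S(L - E_i)| \cong \PP^1$ of strict transforms of lines through $p_i$. A smooth irreducible member $\ell$ satisfies $\ell^2 = 0$ and $\ell \cdot D = 2$; the cotangent sequence on $\ell \cong \PP^1$ yields $\left(\Omega_S^1\right)_{|\ell} \cong \Oo_\ell \oplus \Oo_\ell(-2)$. I then apply Lemma \ref{keylem} to the pair $(D,\ell)$: in the transverse case, where $D \cap \ell$ consists of two reduced points, one computes $\Tt_\ell(-\log B) \cong \Oo_\ell$ and $\Nn_{\log D/B} \cong \Oo_\ell$, so the middle row of diagram~(\ref{keydiag}) splits on $\PP^1$ and yields $\left(\Omega_S^1(\log D)\right)_{|\ell} \cong \Oo_\ell^{\oplus 2}$; in the tangent case, where $\pi(\ell)$ is tangent to $C$ at a point and $D \cap \ell$ is a length-two scheme supported at a single point, the same lemma gives $\Tt_\ell(-\log B) \cong \Oo_\ell(1)$ and $\Nn_{\log D/B} \cong \Oo_\ell(-1)$, and the extension again splits on $\PP^1$, yielding $\left(\Omega_S^1(\log D)\right)_{|\ell} \cong \Oo_\ell(-1) \oplus \Oo_\ell(1)$. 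Hence the smooth irreducible members of $|\Oo_S(L - E_i)|$ on which $\Omega_S^1(\log D)$ fails to split as $\Oo_\ell^{\oplus 2}$ are precisely the strict transforms of the two tangent lines from $p_i$ to $C$.

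This jumping locus depends only on the isomorphism class of $\Omega_S^1(\log D)$, so the hypothesis $\Omega_S^1(\log D_1) \cong \Omega_S^1(\log D_2)$ forces $C_1$ and $C_2$ to share their two tangent lines from each of the six points $p_i$. Dually, the smooth conics $C_1^*, C_2^* \subset \PP^{2\vee}$ share twelve points, two for each $p_i$ counted with multiplicity. Since $p_1,\dots,p_6$ are in general position, in particular no three are collinear, so no tangent line to a conic can pass through three of the $p_i$; each tangent line therefore appears at most twice in the list of twelve, so $C_1^*$ and $C_2^*$ share at least six distinct points. By B\'ezout, two distinct smooth conics in $\PP^{2\vee}$ meet in at most four points, forcing $C_1^* = C_2^*$, hence $C_1 = C_2$ and $D_1 = D_2$. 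The main obstacle is the careful application of Lemma \ref{keylem} in the tangent case, namely identifying $\Nn_{\log D/B}$ as $\Oo_\ell(-1)$ from the right-hand column of diagram~(\ref{keydiag}) and verifying that the resulting extensions split on $\PP^1$, together with restricting attention to the smooth irreducible locus of $|\Oo_S(L-E_i)|$ so as to discard the five reducible degenerations $L_{ij} + E_j$ (with $j \neq i$) as spurious jumping members.
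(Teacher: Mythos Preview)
Your proposal is correct and follows essentially the same approach as the paper: both restrict the logarithmic bundle to the pencils $|\Oo_S(L-E_i)|$ via the Key Restriction Lemma, detect the two tangent lines from each $p_i$ to the projected conic as the jumping members, and conclude that these twelve tangents determine the conic. Your write-up is in fact slightly more thorough than the paper's, which simply asserts that the twelve tangent lines recover $Q$; you spell out the B\'ezout argument on the dual conics and explicitly flag the need to work over the smooth irreducible locus of each pencil.
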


\begin{proof}
Consider the rational curve $D\in |\Oo_S(2L)|$, of degree 6. Let $Q = \pi(D)\subset \PP^2$ be the conic coming from the projection. Denote the $6$ blown-up points by $Z = \{p_1, \dots, p_6\}$, therefore $Z\cap Q =\emptyset$. Now pick a smooth conic $i: C\hookrightarrow S$ from the complete linear system $|\Oo_S(L-E_1)|$ so that $\pi(C)$ is a line through $p_1$. Set $C \cap D=\{q_1, q_2\}$; notice that $q_1=q_2$ when the line $\pi(C)$ is tangent to the conic $Q$ at $\pi(q_1)$. If $q_1\ne q_2$, the upper horizontal sequence of the diagram (\ref{keydiag}) gives
\[
0\to \Tt_C\left (-\log \{q_1, q_2\}\right) \to i^* \Tt_S(-\log D) \to \Oo_C \to 0
\] 
so that we have $\left( \Tt_S(-\log D)\right)_{|C} \cong \Oo_C^{\oplus 2}$. On the other hand, if $q_1=q_2$, the diagram produces an exact sequence
\[
0\to \Tt_C\left (-\log \{q_1\}\right) \to i^* \Tt_S(-\log D) \to \Oo_C (-q_1)\to 0
\] 
so that we have $\left( \Tt_S(-\log D)\right)_{|C} \cong \Oo_C\left(-q_1\right)\oplus \Oo_C\left(q_1\right)$. Thus to the sextic $D\in |\Oo_S(2L)|$ we associate two (tangent) lines to the conic $Q=\pi(D)$ passing through $p_1$, as its jumping lines. By replacing $p_1$ by the other points in $Z$, we get $12$ jumping lines, tangent to $Q$ and this uniquely determines $Q$ and $D$ as well. Hence, $D$ can be recovered from $\Omega_S^1(\log D)$. 
\end{proof}

Now, using an analogous argument as in Proposition \ref{stab1}, we can show the stability of logarithmic vector bundles associated to the curves $D\in |\Oo_S(2L)|$ and induce an analogous moduli map. To do so, let us explicit the following geometry property of conics in the projective plane. 

\begin{remark}\label{remquad}
Consider the dual plane $(\PP^2)^{\vee}$ and its Veronese embedding $\iota : (\PP^2)^{\vee} \hookrightarrow V \subset (\PP^5)^{\vee}$, where $(\PP^5)^{\vee}$ parametrizes the conics in $\PP^2$. Fix a smooth conic $C\subset \PP^2$. Then the double lines in $\PP^2$ tangent to $C$ forms a quartic curve $\iota(C^{\vee})$, where $C^{\vee}$ is the dual conic of $C$. We say that a conic $C'$ is {\it tetra-tangent} to $C$ if the intersection $C'\cap C$ supports only one point. Then the set $T_4(C)\subset (\PP^5)^{\vee}$ of tetra-tangent conics to $C$ is a cone over $\iota(C^{\vee})$ with the vertex point $[C]\in (\PP^5)^{\vee}$. For two points $Z=\{p_1, p_2\}$ with $Z\cap C=\emptyset$, let $H_i$ be the subspace of conics passing through $p_i$. Then the intersection $T_4(C) \cap H_1 \cap H_2$ consists of $4$ points. In particular, there exist no $4$ smooth conics $C_1, \dots, C_4 \in H_1 \cap H_2$ that are tetra-tangent to a smooth conic $C$, when $C$ is tangent to the line $p_1p_2$. Indeed, the singular conic supporting the line $p_1p_2$ would be another conic tetra-tangent to $C$, giving a contradiction. 

On the other hand, for any smooth tetra-tangent conic $C'$ to $C$, there is no triangle $q_1q_2q_3$ that is inscribed in $C'$ and circumscribing $C$, i.e., $C$ is not Poncelet related to any smooth tetra-tangent conics to $C$. Without loss of generality, set $C=V(2x_0x_1+x_2^2)$ and assume that $C'$ is contained in the ruling over the double line $V(x_0^2)$, i.e., $C'=(2x_0x_1+x_2^2+\alpha x_0^2)$ for some $\alpha \ne 0$. Setting $A$ and $A'$ be the corresponding symmetric matrices to $C$ and $C'$, we have
\[
\det (t_0A+t_1A')= \det \begin{pmatrix}  t_1 \alpha & t_0+t_1 & 0 \\ t_0+t_1 & 0 & 0 \\ 0 &0 & t_0+t_1 \end{pmatrix}=-(t_0+t_1)^3
\]
Then we get the assertion by \cite[Theorem 2.3.14]{D}; in the notation of \cite[Theorem 2.3.14]{D} we have $(\Theta, \Theta', \triangle')=(-3,-3,-1)$ so that we have ${\Theta'}^2-4\Theta\triangle'=-3\ne 0$.
\end{remark}

\begin{theorem}\label{stab2}
Given a smooth sextic curve $D\in |\Oo_S(2L)|$ the logarithmic bundle $\Omega_S^1(\log D)$ is $\mu$-stable  with respect to $\Oo_S(H)$ and with Chern classes $(c_1, c_2)=\left(K_S+2L, 7\right)$. Therefore the induced rational map 
\[
\Psi _2: \PP \mathrm{H}^0(\Oo_S(2L))\cong \PP^5 \dashrightarrow \mathbf{M}_S^H(K_S+2L, 7)
\]
is generically one-to-one.  
\end{theorem}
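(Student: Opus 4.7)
We follow the blueprint of the proof of Theorem \ref{stab1}. A direct Chern class computation using the Poincar\'e residue sequence
\[
0\to\Omega^1_S\to\Omega^1_S(\log D)\to\Oo_D\to 0
\]
gives $c_1(\Omega^1_S(\log D))=K_S+2L$ and $c_2(\Omega^1_S(\log D))=c_2(\Omega^1_S)+K_S\cdot D+D^2=9-6+4=7$. The slope with respect to $H=-K_S$ is $\mu_H(\Omega^1_S(\log D))=\tfrac12 H\cdot c_1=\tfrac32$, so any saturated destabilizing line subbundle $\Oo_S(aL+\sum b_iE_i)\hookrightarrow \Omega^1_S(\log D)$ must satisfy
\begin{equation}\label{eq-destab}
3a+\sum_{i=1}^{6}b_i\ge 2.
\end{equation}

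The main step of the argument, and what I expect to be its technical heart, is the determination of the restriction of $\Omega^1_S(\log D)$ to suitable rational curves via the Key Restriction Lemma \ref{keylem}. For generic $D\in|\Oo_S(2L)|$ one obtains, via the splittings of the resulting short exact sequences (each automatic since the relevant $\Ext^1$ of line bundles on $\PP^1$ vanishes):
\begin{itemize}
\item[(i)] for a generic $C\in|\Oo_S(L-E_i)|$ one has $C\cdot D=2$ and $C^2=0$, hence $\Omega^1_S(\log D)_{|C}\cong \Oo_{\PP^1}^{\oplus 2}$, which gives
\[
a+b_i\le 0 \quad\text{for every }i;
\]
\item[(ii)] for the $(-1)$-line $\widehat{L}_i\in|\Oo_S(2L+E_i-\sum_k E_k)|$ one has $\widehat{L}_i\cdot D=4$ and $\widehat{L}_i^2=-1$, yielding $\Omega^1_S(\log D)_{|\widehat{L}_i}\cong \Oo_{\PP^1}(1)\oplus\Oo_{\PP^1}(2)$; intersecting the destabilizer with $\widehat{L}_i$ gives
\[
\sum_{k\ne i}b_k\le 2-2a \quad\text{for every }i.
\]
\end{itemize}
Subtracting (ii) from (\ref{eq-destab}) gives $b_i\ge -a$ for every $i$, which combined with (i) forces $b_i=-a$ for all $i$. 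Substituting $\sum_k b_k=-6a$ back into (ii) yields $-5a\le 2-2a$, hence $a\ge 0$, while (\ref{eq-destab}) rewrites as $-3a\ge 2$, forcing $a\le -1$. These two conditions are incompatible, so no such destabilizer exists and $\Omega^1_S(\log D)$ is $\mu$-stable with respect to $H$.

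Finally, the Torelli property for $|\Oo_S(2L)|$ is exactly the content of Proposition \ref{quad}, which shows that distinct smooth $D_1,D_2\in|\Oo_S(2L)|$ yield non-isomorphic logarithmic bundles. Combining this with the $\mu$-stability just established, the assignment $D\mapsto[\Omega^1_S(\log D)]$ defines a rational map $\Psi_2\colon\PP \mathrm{H}^0(\Oo_S(2L))\dashrightarrow \mathbf{M}_S^H(K_S+2L,7)$ that is injective on the dense open locus on which it is defined, hence generically one-to-one.
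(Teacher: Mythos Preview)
Your argument is clean and correct \emph{for generic} $D$, but it does not establish the stability claim as stated in the theorem, which is for an arbitrary smooth $D\in|\Oo_S(2L)|$. The issue is in step (ii): the line $\widehat{L}_i$ is the \emph{unique} member of its linear system, so you cannot choose it generically with respect to $D$. For a given smooth $D$, the conic $\pi(\widehat{L}_i)$ may well be bitangent or even tetra-tangent to the conic $\pi(D)$; in those cases the Key Restriction Lemma gives $\Omega^1_S(\log D)_{|\widehat{L}_i}\cong\Oo_{\PP^1}\oplus\Oo_{\PP^1}(3)$ or $\Oo_{\PP^1}(-1)\oplus\Oo_{\PP^1}(4)$, and your inequality $2a-b_i+\sum_k b_k\le 2$ weakens to $\le 3$ or $\le 4$. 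Your contradiction then evaporates: with the weaker bound one finds several integral candidates for $(a,b_1,\dots,b_6)$ that must be excluded by other means.

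This is exactly why the paper's proof is so much longer. It splits into three cases according to how many points each $\widehat{L}_i\cap D$ supports; your argument is essentially their case~(a). Cases~(b) and~(c), where some $\widehat{L}_i$ is bitangent or tetra-tangent to $D$, require substantial additional work: one has to enumerate the surviving candidate destabilizers, and rule them out using genuinely geometric facts about tangent lines and tetra-tangent conics to a fixed smooth conic (Remark~\ref{remquad}), together with pushforward to $\PP^2$ and restriction to exceptional divisors. None of this is visible from the numerical inequalities alone. That said, your argument does suffice for the second assertion of the theorem: generic stability together with Proposition~\ref{quad} already yields that $\Psi_2$ is generically one-to-one. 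If you only intend to prove that, you should say so explicitly and drop the unrestricted stability claim.
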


\begin{proof}
Assume that $\Omega_S^1(\log D)$ is not $\mu$-stable with respect to $\Oo_S(H)$. Considering a destabilizing line bundle 
\[
\Ll:=\Oo_S\left(aL+\sum_{i=1}^{6}b_i E_i\right) \hookrightarrow \Omega_S^1(\log D),
\]
we have
\begin{equation}\label{deg-ineq2}
3a + \sum_{i=1}^{6}b_i= \mu\left(\Oo_S(aL+\sum_{i=1}^{6}b_i E_i)\right) \geq \mu\left(\Omega_S^1(\log D)\right) = \frac{3}{2}.
\end{equation}
Using Lemma \ref{keylem} as in Proposition \ref{stab1}, we get the following inequalities, where the left column stands for the linear systems that the curve $C$ belongs to: note that $C$ is a rational curve to which the destabilizing line bundle is restricted. 
\begin{center}

\begin{tabular}{ c|c }
  \hline
  Linear systems &Inequalities \\
  \hline
$L$ & $a\le 0$ \\
 $E_i$ & $b_i \ge -1$ \\
 $L-E_i$ & $a+b_i \le 0$\\
 $L-E_i-E_j$ & $a+b_i+b_j \le 2$\\
   \hline
\end{tabular}
\captionof{table}{}\label{tab}
\end{center}
Note also that the last inequality can become an equality, only when $C$ is tangent to $D$. Now consider the line $C=\widehat{L}_i$ from $|\Oo_S(2L+E_i-\sum_{j=1}^6 E_j)|$.

\quad (a) Assume first that the intersection of the line $\widehat{L}_i$ with $D$ supports at least three points for each $i$, i.e., either $\widehat{L}_i$ is not tangent to $D$ or $\widehat{L}_i \cap D=\{2q_1, q_2, q_3\}$ with three distinct points $q_1, q_2, q_3$. Applying Lemma \ref{keylem}, we get $2a-b_i +\sum_{j=1}^6 b_j \le 2$, from which we get 
\[
\frac{3}{2}\le 3a+\sum_{j=1}^6 b_j \le 2+(a+b_i) \le 2.
\]
This implies that $2a-b_i +\sum_{j=1}^6 b_j =2$ and $a+b_i=0$ for each $i$. Then the only possibility would be $(a, b_1, \dots, b_6)=(0,\dots, 0)$, which contradicts the inequality (\ref{deg-ineq2}).

\quad (b) Now assume that the intersection $\widehat{L}_i \cap D$ supports two points for some $i$, say $i=6$, but not one point for any $i$. Then, applying Lemma \ref{keylem} to $\widehat{L}_6$, we get $2a+\sum_{j=1}^5 b_j \le 3$ and so 
\[
2\le 3a+\sum_{j=1}^6b_j=(a+b_6)+(2a+\sum_{j=1}^5 b_j) \le 3
\]
would imply that $b_6\in \{-a,-a-1\}$, due to $a+b_6\le 0$. Thus, in general, combining the results in (a), we get
\[
b_i \in \{-a,-a-1\}
\]
for any $i=1,\dots, 6$. Calling $\delta$ the number of $i$'s with $b_i=-a-1$, we get
\[
2\le -3a-\delta=3a+\sum_{j=1}^6 b_j \le 3,
\]
implying $-3 \le a \le -1$. If $a=-3$, we get that $b_i=2$ for each $i$, i.e., $\delta=6$, contradicting the inequality $2a+\sum_{j=1}^5 b_j \le 3$. If $a=-2$, we get $\delta \in \{3,4\}$. Since we have $-b_i + \sum_{j=1}^6 b_j \le 7$ for any $i$, the only possibility is that $\delta =4$. In particular, in addition to $\widehat{L}_6$, there would exist three more lines $\widehat{L}_{i_1}, \dots, \widehat{L}_{i_3}$ with $1 \le i_1<\dots < i_3 \le 5$ such that each intersects $D$ in at most two points. Without loss of generality, set $(b_1, \dots, b_6)=(2,2,1,1,1,1)$ so that each of $\widehat{L}_3, \dots, \widehat{L}_6$ intersects $D$ at most two points, and $L_{12}$ is tangent to $D$. Then we obtain an exact sequence
\[
0\to \Ll \to \Omega_S^1(\log D) \to \Ii_{\Gamma, S}(L-E_1-E_2) \to 0,
\]
for a zero-dimensional subscheme $\Gamma\subset S$ of length $5$, whose restriction to the exceptional divisor $E_i$ with $i=3,4,5,6$ gives
\[
0\to \Oo_{E_i}(-1+c) \to \Oo_{E_i}(1)\oplus \Oo_{E_i}(-2) \to \Oo_{E_i}(-c) \to 0
\]
for $c=\mathrm{length}(E_i \cap D)$, because $\left( \Omega_S^1(\log D)\right)_{|E_i} \cong \left( \Omega_S^1\right)_{|E_i} $. Then we have $c=2$, i.e., each $E_i$ contains exactly two points of $\Gamma$, and this is impossible. 

Now we assume the remaining case that $a=-1$ and so $\delta\in \{0,1\}$, i.e., the possible destabilizing line bundle is either
\[
\Oo_S\left(-L+ \sum_{j=1}^6 E_j\right) \quad {\text or} \quad \Oo_S\left(-L-E_i+\sum_{j=1}^6 E_j \right).
\]
for some $i$. In the former case, as in the last part of the proof of Proposition \ref{stab1} we get a commutative diagram
\[
\begin{tikzcd}[
  row sep=small, column sep=small,
  ar symbol/.style = {draw=none,"\textstyle#1" description,sloped},
  isomorphic/.style = {ar symbol={\cong}},
  ]
&&&0\ar[d]   \\
&&0 \ar[d] &\Oo_S(-3L+\sum_{i=1}^6 E_i) \ar[d] \\
&&\Oo_S(-L+\sum_{i=1}^6 E_i) \ar[r, isomorphic]\ar[d] & \Oo_S(-L+\sum_{i=1}^6 E_i) \ar[d]  \\
0 \ar[r] & \Omega_{S}^1 \ar[r] & \Omega_S^1(\log D) \ar[r] & \Oo_{D} \ar[r]  & 0,\\ 
\end{tikzcd}
\]
from which we obtain a nonzero map $\Oo_S(-3L+\sum_{i=1}^6 E_i) \cong \Oo_S(-H) \hookrightarrow \Omega_S^1$, contradicting the vanishing $\mathrm{H}^0(\Omega_S^1(H))=0$. In the latter case, we similarly obtain a nonzero map 
\[
\Oo_S\left ( -3L-E_i+\sum_{j=1}^6 E_j \right)\hookrightarrow \Omega_S^1.
\]
On the other hand, we have $\mathrm{H}^0(\Omega_S^1(H+E_i))=0$ by an analogous computation as at the end of the proof of Theorem \ref{stab1}, giving a contradiction. 

\quad (c) Finally we assume that $\widehat{L}_i \cap D$ supports a point for some $i$, say $i=6$, i.e., $\widehat{L}_6$ is quad-tangent to $D$. Then we get an inequality $2a+\sum_{j=1}^5b_j\le 4$, and as in (b) we get $-5 \le a \le -1$ with $b_i \in \{-a, -a-1, -a-2\}$ for each $i$. Note that in general we have 
\begin{equation}\label{ine343}
2a-b_i+\sum_{j=1}^6 b_j \le 4
\end{equation}
for any $i$. Setting $\delta_t$ to be the number of $i$'s with $b_i=-a-t$ for $t\in \{0,1,2\}$, we have
\[
2\le 3a+\sum_{j=1}^6 b_j= -3a-\delta_1-2\delta_2\le 4.
\]
If $a=-5$, then we have $11\le \delta_1+2 \delta_2 \le 13$, implying that $(\delta_1, \delta_2) \in \{(0,6), (1,5)\}$. In each case we have a contradiction to the inequality (\ref{ine343}). Similarly, we can obtain the list of possible pairs $(\delta_1, \delta_2)$ together with values of $b_i$'s (see Table \ref{tab2}). 
\begin{center}
\begin{tabular}{ c|c|c }
  \hline
 $a$ &$(\delta_1, \delta_2)$ & $\{b_1, \dots, b_6\}$ \\
  \hline
$a=-4$ & $(0,5)$ & $\{4,2,2,2,2,2\}$\\
            & $(2,4)$ & $\{3,3,2,2,2,2\}$ \\
  \hline
$a=-3$ & $(1,3)$ & $\{3,3,2,1,1,1\}$\\
            & $(3,2)$ & $\{3,2,2,2,1,1\}$\\
            & $(5,1)$ & $\{2,2,2,2,2,1\}$\\
            & $(6,0)$ & $\{2,2,2,2,2,2\}$\\
  \hline
  $a=-2$ & $(2,1)$ & $\{2,2,2,1,1,0\}$\\
               & $(3,0)$ & $\{2,2,2,1,1,1\}$\\
               & $(4,0)$ & $\{2,2,1,1,1,1\}$\\
  \hline
  $a=-1$ & $(0,0)$ & $\{1,1,1,1,1,1\}$\\
              & $(1,0)$ & $\{1,1,1,1,1,0\}$\\
   \hline
\end{tabular}
\captionof{table}{}\label{tab2}
\end{center}
Recall that we have the following two conditions satisfied. 
\begin{itemize}
\item If $a+b_i+b_j =2$, then the line $\pi(L_{ij})$ is tangent to $\pi(D)$. 
\item If $2a-b_i+\sum_{j=1}^6b_j = 4$, then the conic $\pi(\widehat{L}_i)$ is tetra-tangent to $\pi(D)$. 
\end{itemize}
Note that the last three cases in Table \ref{tab2} are impossible by the last argument in (b). Assume that $(a,\delta_1, \delta_2)=(-4,0,5)$ and set $b_1=4$. Then the five lines $L_{12}, L_{13}, \dots, L_{16}$ would be tangent to $D$. It is impossible, because the six points $p_1, \dots, p_6$ are in general position. The case $(a,\delta_1, \delta_2)=(-3,1,3)$ is not possible, because of the inequality $a+b_i+b_j\le 2$ in Table \ref{tab}. Also by Remark \ref{remquad} the cases 
\[
(a,\delta_1, \delta_2)\in \left\{(-4,2,4), (-2,2,1), (-2,3,0), \right\}
\]
are not possible. In case $(a,\delta_1, \delta_2)=(-3,3,2)$, assuming $(b_1, \dots, b_6)=(3,2,2,2,1,1)$, the lines $L_{12}, L_{13}, L_{14}$ would be tangent to $D$. Then three points $p_1$ and two of $p_2, p_3, p_4$ are collinear, a contradiction. Thus there are two cases left; $(a, \delta_1, \delta_2) \in \{(-3,5,1), (-3, 6,0)\}$, and we may assume 
\[
(a, b_1, \dots, b_6)\in \{(-3,2,2,2,2,2,1), (-3,2,2,2,2,2,2)\}.
\]
In case of $(a, b_1, \dots, b_6)=(-3,2,2,2,2,2,2)$, we may consider an exact sequence
\begin{equation}\label{eqa551}
0\to \Oo_S\left(-3L+\sum_{j=1}^6 2E_j\right) \to \Omega_S^1(\log D) \to \Ii_{p, S}\left(2L-\sum_{j=1}^6 E_j\right) \to 0,
\end{equation}
where $p$ is a point in $S$. Restricting the sequence (\ref{eqa551}) to $E_j$, we get that $p\not \in E_j$ for any $j$, because $\left(\Omega_S^1(\log D) \right)_{|E_j} \cong \left(\Omega_S^1\right)_{E_j} \cong \Oo_{E_j}(-2)\oplus \Oo_{E_j}(1)$. Similarly, if we restrict the sequence (\ref{eqa551}) to $\widehat{L}_j$, we get
\[
\left(\Omega_S^1(\log D)\right)_{|\widehat{L}_j} \cong \Oo_{\widehat{L}_j}(4+c) \oplus \Oo_{\widehat{L}_j}(-1-c),
\]
where $c=1$ if $p\in \widehat{L}_j$, or $c=0$ otherwise. On the other hand, applying Lemma \ref{keylem}, we also obtain 
\[
\left(\Omega_S^1(\log D)\right)_{|\widehat{L}_j} \cong \Oo_{\widehat{L}_j}(k-2) \oplus \Oo_{\widehat{L}_j}(5-k),
\]
where $k$ is the number of distinct points in $\mathrm{Supp}(D \cap \widehat{L}_j)$. Thus we get that $(c,k)=(0,1)$, and in particular $p$ is not contained in $\widehat{L}_j$ for any $j$. Now pick a unique smooth curve $C=C_{1234} \in |\Oo_S(2L-E_1-E_2-E_3-E_4)|$, also passing through $p$. Then we get 
\[
\left(\Omega_S^1(\log D)\right)_{|C} \cong \Oo_{C}(3) \oplus \Oo_{C}(-1)
\]
from the sequence (\ref{eqa551}), while we also get an exact sequence 
\[
0\to \Oo_{C}(4-k) \to \left(\Omega_S^1(\log D)\right)_{|C} \to \Oo_C(k-2) \to 0
\]
from Lemma \ref{keylem}, where $k$ is the number of distinct points in $\mathrm{Supp}(D \cap C)$. So the only possibility is $k=1$, i.e., $C$ is tetra-tangent to $D$. Similarly we can obtain $6\choose 4 = 15$ smooth curves $C_{i_1i_2i_3i_4}$ that are tetra-tangent to $D$, all passing through $p$, for any $1\le i_1<i_2<i_3<i_4\le 6$. In particular, in the $3$-dimensional family of conics through $p$ and $p_1$, we have the following $10$ tetra-tangent conics to $D$:
\[
C_{1234}, C_{1235}, C_{1236}, C_{1245},C_{1246}, C_{1256}, C_{1345}, C_{1346}, C_{1356}, C_{1456}.
\]
This gives a contradiction, because there are at most $4$ tetra-tangent conics to $D$ in the family by Remark \ref{remquad}. 

Finally assume that $(a,b_1, \dots, b_6)=(-3,2,2,2,2,2,1)$ and consider an exact sequence
\begin{equation}\label{eqa552}
0\to \Oo_S\left(-3L+\sum_{j=1}^5 2E_j+E_6 \right) \to \Omega_S^1(\log D) \to \Ii_{\Gamma, S}\left(2L-\sum_{j=1}^5 E_j\right) \to 0,
\end{equation}
where $\Gamma$ is a zero-dimensional subscheme of $S$ with length $3$, say $\Gamma=\{p,q,r\}$. Similarly as above, by considering the restriction of the sequence (\ref{eqa552}) to each $E_j$, we may assume that $q,r \in E_6$ and $p\not\in E_j$ for any $j$. Applying the push-forward functor $\pi_*$ to the sequence (\ref{eqa552}), we get
\begin{equation}\label{eqa553}
0\to \Oo_{\PP^2}(-3) \to \Omega_{\PP^2}^1(\log \pi(D)) \cong \Tt_{\PP^2}(-2) \to \Ii_{\Gamma', \PP^2}(2) \to 0,
\end{equation}
where $\Gamma'=\{p_1, \dots, p_5, \pi(p), \pi(q)=\pi(r)\}$. Using Lemma \ref{keylem}, we get that the six points $Z'=\{p_1, \dots, p_5, \pi(p)\}$ are not contained in any conic so that we can consider the blow-up of $\PP^2$ at $Z'$ to obtain another smooth cubic surface $\pi': S'\rightarrow \PP^2$ with the exceptional divisors $E_1', \dots, E_5', E_p'$. Now taking the pull-back of the sequence (\ref{eqa553}) together with the logarithmic bundle $\Omega_{S'}^1(\log D')$, where $D'$ is the strict transform of $\pi(D)$ in $S'$, we obtain the following commutative diagram
\begin{equation}\label{dddddia}
\begin{tikzcd}[
  row sep=small,
  ar symbol/.style = {draw=none,"\textstyle#1" description,sloped},
  isomorphic/.style = {ar symbol={\cong}},
  ]
&0\ar[d] &0\ar[d] & \\
0\ar[r] & \Oo_{S'}(-3L') \ar[r] \ar[d] & (\pi')^* \Omega_{\PP^2}^1(\log \pi(D))  \ar[d]\ar[r] &\Ii_{q',S'}(2L'-E') \ar[r]\ar[d,isomorphic] &0\\
0\ar[r] &\Oo_{S'}(-3L'+2E')  \ar[r] \ar[d] & \Omega_{S'}^1(\log D') \ar[r] \ar[d] &\Ii_{q',S'}(2L'-E') \ar[r] & 0\\
& \Oo_{E'}(2E') \ar[r,isomorphic]\ar[d] &\Oo_{E'}(2E') \ar[d] &   \\
&0&0,
\end{tikzcd}
\end{equation}
where $L'$ is a divisor class in $(\pi')^*\Oo_{\PP^2}(1)$, $E'=E_1'+\dots+E_5'+E_p'$, and $q'=(\pi')^{-1}(\pi(q))$. From the previous argument for $(a, b_1, \dots, b_6)=(-3,2,2,2,2,2,2)$, the middle horizontal sequence cannot occur, a contradiction. 
\end{proof}

\subsection{Curves from the linear system $|\Oo_S(3L)|$}\label{subsec-deg9} In this last subsection we want to extend the analogy with the last of the cases studied in Section \ref{sec-projplane}, namely for cubic curves $C\subset \PP^2$. Recall that we proved in Theorem \ref{STP}
that the Torelli property does not hold for pairs $(C,Z)$ of cubic curves $C$ of Sebastiani-Thom type
and a set of points $Z\subset C$, unless $Z$ determines uniquely the curve $C$. Now we are going to prove that the situation is completely opposite when $C\cap Z=\emptyset$. We are going to keep using the following notation from Remark 4.10: recall that $\mathrm{ST}(3)_0$ denotes the set of Sebastiani-Thom type cubics defined as
\[
\mathrm{ST}(3)_0 = \left\{ V(x^3-y^3+az^3) \:|\: a\in \CC^{\times} \right\}.
\]
Consider also the set of three points $W=\{[1:0:0], [0:1:0], [0:0:1]\}$, which do not belong to any curve in $\mathrm{ST}(3)_0$. Let $Z\subset\PP^2$ be the union of $W$ with another three generic points in $\PP^2$ and let $\pi : S =\mathrm{Bl}_{Z}(\PP^2) \rightarrow \PP^2$ be the cubic surface obtained blowing-up $\PP^2$ at $Z$.

\bigskip

\begin{proposition}\label{prop52}
For any two distinct cubic curves $D_1 , D_2\in \mathrm{ST}(3)_0$, we have
\[
\Omega_{S}^1(\log \widetilde{D}_1) \not \cong \Omega_{S}^1(\log \widetilde{D}_2),
\]
where $\widetilde{D}_i$, $i=1,2$, denotes the strict transform of $D_i$ along the blow-up $\pi : S =\mathrm{Bl}_{Z}(\PP^2) \rightarrow \PP^2$.
\end{proposition}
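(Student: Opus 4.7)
The strategy is to distinguish $\Omega_S^1(\log \widetilde{D}_a)$ for distinct values of $a \in \CC^\times$ by computing the splitting type of its restriction to a pencil of smooth rational curves in $S$. I would work with the pencil $\Pi = |L - E_1|$, whose general member is the strict transform $C_t = \widetilde{\ell}_t$ of a line $\ell_t : y - tz = 0$ through the blown-up point $p_1 = [1:0:0] \in W$. For a generic choice of the three additional blown-up points, each such $C_t$ is a smooth rational curve with $C_t^2 = 0$ and $C_t \cdot \widetilde{D}_a = 3$, using that $\widetilde{D}_a \in |3L|$ since $D_a$ avoids $Z$.

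Substituting $y = tz$ into the defining equation of $D_a$ yields $x^3 + (a - t^3)z^3 = 0$, which distinguishes two cases for the intersection $\ell_t \cap D_a$. When $t^3 \neq a$, the intersection consists of three reduced points, and Lemma \ref{keylem} applied to $(C_t, \widetilde{D}_a)$ produces the short exact sequence
\[
0 \to \Oo_{\PP^1}(-1) \to \Tt_S(-\log \widetilde{D}_a)|_{C_t} \to \Oo_{\PP^1} \to 0,
\]
which splits by the vanishing of $\mathrm{H}^1(\Oo_{\PP^1}(-1))$; dualization then gives $\Omega_S^1(\log \widetilde{D}_a)|_{C_t} \cong \Oo_{\PP^1} \oplus \Oo_{\PP^1}(1)$. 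When $t^3 = a$, the line $\ell_t$ is an inflection tangent to $D_a$ at the point $[0:t:1]$, consistent with the fact that the Hessian curve of $D_a$ is the triangle $xyz = 0$; the intersection therefore reduces to a single point of multiplicity three, and Lemma \ref{keylem} now yields
\[
0 \to \Oo_{\PP^1}(1) \to \Tt_S(-\log \widetilde{D}_a)|_{C_t} \to \Oo_{\PP^1}(-2) \to 0,
\]
which again splits, yielding $\Omega_S^1(\log \widetilde{D}_a)|_{C_t} \cong \Oo_{\PP^1}(-1) \oplus \Oo_{\PP^1}(2)$.

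The upshot is that the set of jumping members of the pencil $\Pi$ for the bundle $\Omega_S^1(\log \widetilde{D}_a)$ is precisely $\{C_t : t^3 = a\}$, a set of three curves that depends injectively on $a$. An isomorphism $\Omega_S^1(\log \widetilde{D}_{a_1}) \cong \Omega_S^1(\log \widetilde{D}_{a_2})$ would identify these two jumping sets, forcing $\{t : t^3 = a_1\} = \{t : t^3 = a_2\}$ and hence $a_1 = a_2$. The main technical point is to ensure that for every $a \in \CC^\times$ the three strict transforms $C_t$ with $t^3 = a$ genuinely lie in the class $L - E_1$, i.e.\ that $\ell_t$ does not also pass through any of the three extra blown-up points. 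For generic choice of those points only finitely many values of $a$ are exceptional for the pencil through $p_1$, and these can be handled symmetrically using the analogous pencils $|L - E_2|$ or $|L - E_3|$ through the other two points of $W$, since no value of $a$ can be exceptional for all three pencils simultaneously.
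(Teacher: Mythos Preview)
Your proof is correct and follows essentially the same strategy as the paper: both restrict $\Omega_S^1(\log \widetilde{D})$ to curves in the pencils $|L-E_i|$ (for $i=1,2,3$) via the Key Restriction Lemma, observe that the splitting type jumps precisely when the underlying line in $\PP^2$ is an inflection tangent of $D$, and conclude that these recovered inflection tangents determine the curve. Your version is more explicit in coordinates---directly reading off $a$ from the cube of the jumping parameters---and you are more careful than the paper about the finitely many reducible members of the pencil, which the paper handles only implicitly by restricting to irreducible $C$.
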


\begin{proof}
Fix a curve $D\in  \mathrm{ST}(3)_0$, and pick an irreducible curve $C \in |\Oo_{S}(L-E_i)|$ for each $i=1,2,3$. Applying Lemma \ref{keylem} to $\Tt_{S}(-\log \widetilde{D})$, we obtain
\[
\left (\Omega_{S}^1(\log \widetilde{D}) \right)_{|C} \cong \Oo_C(1)\oplus \Oo_C,
\]
unless the intersection $C\cap \widetilde{D}$ supports only one point, when the restriction becomes $\Oo_C(2)\oplus \Oo_C(-1)$. In other words, we can recover the nine inflection lines of $D$ from the logarithmic bundle $\Omega_{S}^1(\log \widetilde{D})$. Thus the assertion follows from the fact that the set of nine inflection lines determines uniquely the cubic curve. 
\end{proof}

\section{Final remarks and further questions}
In this section we would like to underline a couple of research lines that, in our opinion, naturally arise from this work and would be interesting to study.\\

(i) One direction of study is to give a complete description of the map 
    \[
    \Psi : |\Oo_X(D)| \dashrightarrow \mathbf{M}_{X}(c_1, c_2)
    \]
    between the linear system of given divisor $D$ in a surface $X$ and the moduli space of stable sheaves (with the appropriate Chern classes), defined by assigning the logarithmic vector bundle. In this work, we have studied the stability and the Torelli property in order to ensure the existence and the generic injectivity of the map. Nevertheless, it would be also interesting to determine and describe its indeterminacy locus and propose an extension of the map itself on the whole linear system.\\
    
(ii) One of the main ideas promoted in this article is that the splitting type of a logarithmic sheaf on a rational curve is special whenever the curve is somewhat tangent to the fixed divisor. On the other hand, we have seen examples in which this special behaviour is not detected; see Example \ref{exmm12}. Nevertheless, we provide evidence that, after blowing up the ambient surface, this special behaviour is always detected. This leads us to proposing the following.

\begin{question} 
Given any reduced Cartier divisor $D$ on a smooth surface $X$, is it possible to always find a finite set of points $Z\subset X\setminus D$ such that, being $\pi : \widetilde{X} \rightarrow X$ the blow-up along $Z$ and $\widetilde{D}$ the strict transform of $D$, $\widetilde{D}$ satisfies the Torelli property?
\end{question}

(iii) Finally, another possible interesting direction is to deal with the higher dimensional cases. The first issue would be to give an extended definition of \textit{generalized logarithmic sheaf}, taking into consideration all the possible variations that depend on the choice of the center of the blow-up. Moreover, it would be also necessary to obtain a generalization of the Key Restriction Lemma \ref{keylem}, which does not only extend it in dimension but also loose the hypothesis on the divisor.

\bibliographystyle{amsplain} 
\providecommand{\bysame}{\leavevmode\hbox to3em{\hrulefill}\thinspace}
\providecommand{\MR}{\relax\ifhmode\unskip\space\fi MR }
\providecommand{\MRhref}[2]{%
  \href{http://www.ams.org/mathscinet-getitem?mr=#1}{#2}
}
\providecommand{\href}[2]{#2}

\end{document}